% ----------------------------------------------------------------
% AMS-LaTeX Paper ************************************************
% **** -----------------------------------------------------------
\documentclass{amsart}
\usepackage{amssymb}
\usepackage{epsfig}

% ----------------------------------------------------------------
\vfuzz2pt % Don't report over-full v-boxes if over-edge is small
\hfuzz2pt % Don't report over-full h-boxes if over-edge is small
% THEOREMS -------------------------------------------------------
\newtheorem{thm}{Theorem}[section]
\newtheorem{cor}[thm]{Corollary}
\newtheorem{lem}[thm]{Lemma}
\newtheorem{prop}[thm]{Proposition}
\theoremstyle{definition}
\newtheorem{defn}[thm]{Definition}
\theoremstyle{remark}
\newtheorem{rem}[thm]{Remark}

\numberwithin{equation}{section}
% MATH -----------------------------------------------------------

\newcommand{\Z}{\mathbb Z}

\newcommand {\exit}{\textrm {exit}}

\newcommand {\eps}{\varepsilon}
\newcommand {\Fin}{f_{\mid\Lambda}}

% ----------------------------------------------------------------
\begin{document}

\title[Diffusion along transition chains of invariant tori]
{Diffusion along transition chains of invariant tori and Aubry-Mather sets}
%%%{Existence of orbits  that follow transition  chains of invariant tori and cross Birkhoff zones of instability with Lipschitz boundaries}

\author[Marian Gidea]{Marian Gidea$^\dag$}

\address{School of Mathematics, Institute for Advanced Study, Princeton, NJ 08540, USA, and Department of Mathematics, Northeastern Illinois University, Chicago, IL 60625, USA}
\email{mgidea@neiu.edu}%

\author{Clark Robinson}%
\address{Department of Mathematics, Northwestern University,  Evanston, IL
60208, USA}%
\email{clark@math.northwestern.edu}

\thanks{$^\dag$Research partially supported by NSF grant: DMS
0601016 and   DMS 0635607.} \subjclass{Primary,
37J40;  % Perturbations, normal forms, small divisors,
        % KAM theory, Arnol\cprime d diffusion
37C50; % Pseudorbits and  shadowing
37C29; %homoclinic and heteroclinic orbits
Secondary,
37B30}%Index theory, Morse-Conley indices

\keywords{Arnold diffusion; Aubry-Mather sets; correctly aligned windows; shadowing.}%
\thanks{}
%\date{}%
%\dedicatory{}%
%\commby{}%
% ----------------------------------------------------------------
\begin{abstract}
We describe a topological mechanism for the existence of diffusing orbits in a dynamical system satisfying the following assumptions: (i) the phase space contains a normally hyperbolic invariant manifold diffeomorphic to a two-dimensional annulus, (ii) the restriction of the dynamics to the annulus is an area preserving monotone twist map, (iii) the annulus contains sequences of invariant one-dimensional tori that form transition chains, i.e., the unstable manifold of each torus has a topologically transverse intersection with the stable manifold of the next torus in the sequence, (iv) the transition chains of tori are interspersed with gaps created by resonances, (v) within each gap there is prescribed a finite collection of Aubry-Mather sets. Under these assumptions, there exist trajectories that follow the transition chains, cross over the  gaps, and follow the Aubry-Mather sets within each gap, in any specified order. This mechanism is related to the Arnold diffusion problem in Hamiltonian systems. In particular,  we prove the existence of diffusing trajectories in the large gap problem of Hamiltonian systems. The argument is topological and constructive.
\end{abstract}
%-----------------------------------------------------------------
\maketitle
% ----------------------------------------------------------------

\section{Introduction}\label{section:introduction}
In this paper  we study  a topological mechanism of diffusion and chaotic orbits related to the Arnold diffusion problem. We consider a  normally hyperbolic invariant manifold diffeomorphic to a two-dimensional annulus. We assume that the dynamics restricted to the annulus is given by an area preserving monotone twist map.  We assume that inside the annulus there exist invariant primary one-dimensional tori (homotopically nontrivial invariant closed curves)  with the property that the unstable manifold of each  torus has   topologically transverse  intersections with the stable manifolds of all sufficiently close tori.
Sequences of such tori and their heteroclinic connections form   transition chains of tori. The successive transition chains of tori are  interspersed with gaps. We assume that each gap is a region in the annulus bounded by two invariant primary  tori
of one of the following types: (i) a  Birkhoff Zone of Instability, i.e., it contains no invariant primary torus in its interior (ii) it contains only  finitely many invariant primary tori in its interior, such that each torus is either isolated, or it consists of a  hyperbolic periodic orbit together with its stable and unstable manifolds that are assumed to coincide.
Within each gap we prescribe a finite collection of Aubry-Mather sets.
We prove the existence of orbits that shadow the primary tori in each transition chain, cross over the gaps that separate the successive transition chains, and also shadow the specified  Aubry-Mather sets within each gap.

The motivation for this result is the Arnold diffusion problem of Hamiltonian systems. This problem asserts that all sufficiently small perturbations of generic, integrable Hamiltonian systems exhibit orbits  along which the action variable changes  substantially; also, there exist  chaotic orbits that can be coded through symbolic dynamics.
A classification of nearly integrable systems proposed  in \cite{ChierchiaG94} distinguishes between \textit{a priori} stable systems, in which the unperturbed system can be expressed in terms of action-angle variables only, and \textit{a priori} unstable systems, in which the unperturbed system contains both action-angle  and hyperbolic variables. In the \textit{a priori} stable case analytical results have been announced in \cite{Mather03}. In the a priori unstable case there have been several analytical results  in the last several years (see \cite{ChierchiaG94,Xia98,Treschev02b,Treschev04,ChengY2004,DelshamsLS2006,Bernard08,ChengY2009}). Some of the approaches involve variational methods,   geometric methods, or topological methods. See \cite{DelshamsGLS2008} for an overview on the Arnold diffusion problem, applications, and additional references.

It is relevant in applications to detect, combine, and compare different  mechanisms of diffusion displayed by concrete systems. In many models, as well as in numerical experiments, diffusion can only be observed  for perturbations of sizes much larger than those considered by the analytical approaches  \cite{Chirikov79,Lieberman83,Lichtenberg92,Laskar93,GuzzoLF05}. For these types of problems, the geometric and topological approaches  are particularly advantageous as  they yield constructive methods to detect diffusion, quantitative estimates on diffusing orbits, and explicit conditions that can be verified in concrete examples.

In this paper we describe a general method  to establish the existence of diffusing orbits for a large class of dynamical systems. The dynamical systems under consideration are not assumed to be small perturbations of integrable Hamiltonians. Moreover, some   systems that are not Hamiltonian can be considered. Our method requires the existence of certain geometric objects that organize the dynamics, and  employs topological tools to establish the existence of diffusing orbits. The existence of the geometric objects can be verified in concrete systems through analytical methods, or through numerical methods, or through a combination of thereof.

We illustrate our method in the case of a Hamiltonian system consisting of a pendulum and a rotator with a small periodic coupling. We give an analytic argument for the existence of diffusing orbits. In \cite{DelshamsGR10} the topological method is applied to show, with the aid of a computer, the existence of diffusing orbits in the spatial restricted three-body problem, where the two primaries are the Sun and the Earth; this model is not nearly integrable. Similar ideas appear in \cite{CapinskiZ11,GalanteK11a,GalanteK11b}.

The diffusing orbits detected by this approach follow transition chains of invariant tori up to the gaps between these transition chains, and then cross the gaps following the inner dynamics restricted to the annulus.
The orbits that cross the gaps  follow Birkhoff connecting orbits that go from one boundary of the gap to the other, or Mather connecting orbits, that shadow a prescribed sequence of Aubry-Mather set inside the gap, or homoclinic orbits.

Related approaches, that use Birkhoff's ideas to understand generic drift properties of random iterations  of exact-symplectic twist maps, appear in \cite{Moeckel2002,LeCalvez2007}.
These yield  the concept of a polysystem, which is a locally constant skew-product  over a Bernoulli
shift \cite{Marco2008}. Polysystems are a key ingredient in the approach of the instability problem in  \textit{a priori}  unstable systems in  \cite{Bounemoura}. Arnold diffusion-type orbits for  random iterations  of  flows of families of Tonelli Hamiltonians are studied in \cite{Mandorino}, based on  an extension of the pseudograph method from \cite{Bernard08}.

The diffusing orbits that we obtain in this paper are similar to those found through variational methods, as  in \cite{ChengY2004,ChengY2009}; they are however not necessarily  action minimizing. Also, we do not need  the unperturbed Hamiltonian to have positive-definite normal torsion.

This paper  completes the investigations undertaken in \cite{GideaR07,GideaR09} where some non-generic conditions, or some conditions  difficult to verify in concrete systems, were assumed.
The present paper assumes very general conditions and provides a new topological mechanism of diffusion based on Aubry-Mather sets.

\section{Main result}\label{sec:mainresult}

In this section we state the assumptions and the main result  of this paper.  After the statement, the assumptions and the main result  are explained and exemplified.
\begin{itemize}
\item [(A1)]  $M$ is a $n$-dimensional $C^r$-differentiable Riemannian manifold, and $f:M\to M$ is a $C^r$-smooth map, for some $r\geq 2$.
\item[(A2)] There exists a 2-dimensional submanifold $\Lambda$ in $M$, diffeormorphic to an
annulus $\Lambda\simeq \mathbb{T}^1\times[0,1]$.  We assume that $f$ is  $2$-normally hyperbolic to $\Lambda$ in $M$ (see Subsection
\ref{section:scattering} for the definition).
Since $f$ is  $2$-normally hyperbolic to $\Lambda$, $W^s(\Lambda)$, $W^u(\Lambda)$, and
$\Lambda$ are all $C^2$-differentiable.
Denote the dimensions of the stable and unstable manifolds of a point $x \in \Lambda$ by
$\text{dim}(W^s(x)) = n_s$ and $\text{dim}(W^u(x)) = n_u$.
Then, $n = 2 + n_s + n_u$.
\item[(A3)]  On $\Lambda$  there is a system of angle-action coordinates $(\phi,I)$, with
$\phi \in {\mathbb T}^1$ and $I \in [0,1]$.
The restriction $f|_{\Lambda}$ of $f$ to $\Lambda$ is a boundary component preserving,  area preserving, monotone twist map, with respect to the angle-action coordinates $(\phi,I)$.
\item [(A4)] The stable and unstable manifolds of $\Lambda$, $W^s(\Lambda)$ and $W^u(\Lambda)$, have a differentiably transverse intersection along a $2$-dimensional homoclinic channel $\Gamma$. Since the manifolds $W^s(\Lambda)$ and $W^u(\Lambda)$ are $C^2$ and transverse,
$\Gamma$ is $C^2$.   We assume that the scattering map  $S$   associated to $\Gamma$ is well defined, and
 hence is   $C^1$.
See  Subsection \ref{section:scattering}.
    \item [(A5)]
    There exists a bi-infinite sequence of Lipschitz primary invariant tori $\{T_i\}_{i\in\mathbb{Z}}$ in $\Lambda$, and
    a bi-infinite, increasing sequence of integers $\{i_k\}_{k\in\mathbb{Z}}$ with the following properties:
    \begin{itemize}
\item[(i)] Each torus $T_i$ intersects the domain $U^-$ and the range $U^+$ of the scattering map $S$ associated to $\Gamma$.
\item [(ii)] For each $i\in\{{i_{k}+1}, \ldots, {i_{k+1}-1}\}$, the image of $T_i\cap U^-$ under the scattering map $S$  is topologically transverse to  $T_{i+1}$.
\item [(iii)] For each torus $T_i$ with $i\in \{{i_{k}+2}, \ldots, {i_{k+1}-1}\}$, the  restriction of $f$ to $T_i$ is topologically transitive.
\item[(iv)] Each torus $T_i$ with $i\in \{{i_{k}+2}, \ldots, {i_{k+1}-1}\}$, can be
$C^0$-approximated from both sides by other primary invariant tori from
$\Lambda$.
\end{itemize}
We will refer to a finite sequence $\{T_i\}_{i=i_k+1,\ldots, i_{k+1}}$  as above as a transition chain of tori.
\item [(A6)]  The region in $\Lambda$ between $T_{i_k}$ and $T_{i_{k}+1}$ contains no invariant primary torus in its interior.
\item[(A7)] Inside each region between $T_{i_k}$ and $T_{i_{k}+1}$   there is prescribed  a finite collection of Aubry-Mather sets $\{\Sigma _{\omega^k_1}, \Sigma _{\omega^k_2},\ldots, \Sigma _{\omega^k_{s_k}}\}$, where $s_k\geq 1$, and $\omega^k_s$ denotes the rotation number of $\Sigma_{\omega^k_s}$. Each  Aubry-Mather set $\Sigma_{\omega^k_s}$ is assumed to lie on some essential circle  $C_{\omega^k_s}$, with the circles  $C_{\omega^k_s}$ mutually disjoint for all $s\in \{1,\ldots,s_k\}$, and $C_{\omega^k_s}$ below $C_{\omega^k_{s'}}$ for all $\omega^k_s<\omega^k_{s'}$. The vertical ordering of these circles is relative to the $I$-coordinate on the annulus.
\end{itemize}

Instead of (A6)  we can consider the following condition:
\begin{itemize}
\item [(A6$'$)]  The region $\Lambda_k$ in $\Lambda$ between $T_{i_k}$ and $T_{i_{k}+1}$  contains finitely many invariant primary  tori $\{\Upsilon_{h^k_1}, \ldots , \Upsilon_{h^k_{l_k}}\}$, where $l_k\geq 1$, satisfying the following properties:
    \begin{itemize}
    \item [(i)] Each $\Upsilon_{h^k_j}$ falls in one of the following two cases:
    \begin{itemize}
    \item [(a)] $\Upsilon_{h^k_j}$ is an isolated invariant primary torus, i.e., an invariant torus that has a neighborhood in the annulus that does not contain any other invariant  primary torus inside.
    \item [(b)] There exists a hyperbolic periodic orbit in $\Lambda$ such that its stable and unstable manifolds coincide. Each invariant manifold has two branches, an upper branch and a lower branch (where the vertical ordering  is relative to the $I$-coordinate of the annulus). Then $\Upsilon_{h^k_j}$ is an invariant primary torus consisting of the hyperbolic periodic orbit together with the upper branches of the invariant manifolds, or consisting of the hyperbolic periodic orbit together with the lower branches of the invariant manifolds.
    \end{itemize}
    \item [(ii)]  The invariant primary  tori $\{\Upsilon_{h^k_1}, \ldots , \Upsilon_{h^k_{l_k}}\}$ are vertically ordered, in the sense that $\Upsilon_{h^k_j}$ is below $\Upsilon_{h^k_{j+1}}$, for all $j=1,\ldots, l_k-1$. The vertical ordering of these tori is relative to the $I$-coordinate on the annulus.
    \item [(iii)] For each $\Upsilon_{h^k_j}$,  $j=1,\ldots, l_k$,
       the inverse image $S^{-1}(\Upsilon_{h^k_j}\cap U^+)$ forms with $\Upsilon_{h^k_j}$  a topological disk  $D_{h^k_j} \subseteq U^-$ below $\Upsilon_{h^k_j}$, such that $S(D_{h^k_j})\subseteq U^+$ is a topological disk  above $\Upsilon_{h^k_j}$, which is bounded by $\Upsilon_{h^k_j}$ and  $S(\Upsilon_{h^k_j}\cap U^-)$. See Fig. \ref{fig:a6prime}.
     \end{itemize}
\end{itemize}

Now we state the main result of the paper.
\begin{thm}\label{thm:main1}
Let   $f:M\to M$  be a $C^r$-differentiable map, and let  $(T_i)_{i\in\mathbb{Z}}$ be a sequence of invariant primary
tori in $\Lambda$, satisfying the properties
(A1) -- (A6), or (A1)-(A5) and (A6\,$'$),  from above. Then for each  sequence
$(\epsilon_i)_{i\in\mathbb{Z}}$ of positive real numbers,   there exist a point $z\in M$
and a bi-infinite increasing sequence   of integers
$(N_i)_{i\in\mathbb{Z}}$  such that
\begin{eqnarray}\label{eqn:main11}  d(f^{N_i}(z), T_{i})<\epsilon _i, \textrm { for all }
i\in\mathbb{Z}.\end{eqnarray}

In addition, if  condition (A7) is assumed, and  some  positive integers $\{n^k_s\}_{s=1,\ldots, s_k}$, $k\in\mathbb{Z}$ are given, then there exist $z\in M$ and $(N_i)_{i\in\mathbb{Z}}$ as in \eqref{eqn:main11}, and positive integers $\{m^k_s\}_{s=1,\ldots, s_k}$, $k\in\mathbb{Z}$,  such that, for each $k$ and each $s\in\{1,\ldots, s_k\}$, we have
\begin{equation}\label{eqn:main22}\pi_\phi(f^j(w^k_s))<\pi_\phi(f^j(z))<\pi_\phi(f^j(\bar w^k_s)),\end{equation}
for some $w^k_s,\bar w^k_s\in\Sigma_{\omega^k_s}$ and for all $j$ with \[N_{i_k}+\sum_{t=0}^{s-1} n^k_t+\sum_{t=0}^{s-1}m^k_t\leq j\leq N_{i_k}+\sum_{t=0}^{s} n^k_t+\sum_{t=0}^{s-1}m^k_t.\]
\end{thm}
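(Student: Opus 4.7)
The proof proceeds by the method of correctly aligned windows, as developed in \cite{GideaR07,GideaR09}. The plan is to build a bi-infinite sequence of windows $W_\ell\subset M$ indexed along the prescribed itinerary, verify that each is correctly aligned with the next under an appropriate iterate of $f$, and invoke the shadowing lemma for correctly aligned windows to produce an orbit $z$ that visits the windows at the required times $N_i$. Near each torus $T_i$, the $2$-normal hyperbolicity in (A2) supplies a local product structure $(u,s,\phi,I)$, where $u,s$ parametrize the unstable and stable bundles and $(\phi,I)$ the annulus coordinates. I would take $W_i$ to be a product of small cubes in these factors, of total diameter less than $\epsilon_i$, with exit set consisting of the unstable face together with a face in the $(\phi,I)$-directions selected according to the monotone twist dynamics of $f|_\Lambda$ near $T_i$.

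For the transition-chain portion corresponding to (A5), I would exploit the fact that the image $S(T_i\cap U^-)$ is topologically transverse to $T_{i+1}$ in $\Lambda$. This yields, on the annulus, a pair of windows on $T_i$ and $T_{i+1}$ correctly aligned under $S$; using that $S$ is realized by a homoclinic excursion along $\Gamma$ and the uniform hyperbolicity of $\Lambda$, one lifts this to windows in $M$ correctly aligned under some forward iterate $f^{N}$, by sliding them along the stable foliation of $\Lambda$ up to the channel and then along the unstable foliation back to $\Lambda$. The transitivity and approximability conditions (A5)(iii)--(iv) let one position the $\phi$-interval of the window on $T_i$ so that $S$ maps it across $T_{i+1}$ on the prescribed side. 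For gap crossings under (A6), Birkhoff's connecting theorem on zones of instability furnishes an orbit segment in $\Lambda$ joining a neighborhood of $T_{i_k}$ to a neighborhood of $T_{i_k+1}$; I would cover this segment by a finite chain of small windows correctly aligned under $f|_\Lambda$ and lift it to $M$. Under the alternative (A6$'$), the configuration of $D_{h^k_j}$ and $S(D_{h^k_j})$ in (A6$'$)(iii) produces, at each intermediate torus $\Upsilon_{h^k_j}$, a pair of windows correctly aligned under the scattering map that jumps over $\Upsilon_{h^k_j}$ from below to above, again lifted to $M$ by the hyperbolic foliations.

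To accommodate the Aubry--Mather itinerary demanded by (A7), I would insert, near each $\Sigma_{\omega^k_s}$, a block of $n^k_s$ windows that track the essential circle $C_{\omega^k_s}$. Since $f|_\Lambda$ is semi-conjugate on $\Sigma_{\omega^k_s}$ to rigid rotation by $\omega^k_s$ and preserves the cyclic order along $C_{\omega^k_s}$, two points $w^k_s,\bar w^k_s\in\Sigma_{\omega^k_s}$ with $\pi_\phi(w^k_s)<\pi_\phi(\bar w^k_s)$ bound a $\phi$-arc that is carried onto the $\phi$-arc between $f(w^k_s)$ and $f(\bar w^k_s)$. Taking the $j$-th window in the block to be a thin neighborhood of $f^j(C_{\omega^k_s})$ restricted to the strip $\{\pi_\phi(f^j(w^k_s))<\phi<\pi_\phi(f^j(\bar w^k_s))\}$ produces, by the twist property, a chain of correctly aligned windows in $\Lambda$ lifted to $M$, and the sandwich condition \eqref{eqn:main22} is then built into the windows themselves. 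Between consecutive Aubry--Mather blocks within the same gap, a further $m^k_s$ windows obtained from the Birkhoff/Mather connecting mechanism (or from successive applications of the scattering map in case (A6$'$)) bridge from $C_{\omega^k_s}$ to $C_{\omega^k_{s+1}}$; the monotone ordering of the circles $C_{\omega^k_s}$ guaranteed in (A7) ensures consistency with the twist dynamics. Concatenating all windows --- torus, transition, gap-crossing, and Aubry--Mather --- yields the bi-infinite sequence; the shadowing lemma delivers $z$, the times $N_i$, and the numbers $m^k_s$.

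The main obstacle is the Aubry--Mather block. Unlike invariant tori, $\Sigma_{\omega^k_s}$ is generically a Cantor set and the dynamics on it is only semi-conjugate to rotation, so one cannot rigidly follow a single orbit. The windows must be wide enough in the $I$-direction to span a neighborhood of $C_{\omega^k_s}$ that is robust under $f|_\Lambda$, yet narrow enough in $\phi$ to realize the prescribed sandwich between $w^k_s$ and $\bar w^k_s$ throughout $n^k_s$ iterates; their exit faces must be chosen compatibly with the twist so that correct alignment survives composition with the transition and gap-crossing windows on either side. Verifying this compatibility, and checking that the Aubry--Mather windows glue correctly to the chain produced by the scattering map, is where the bulk of the technical work lies.
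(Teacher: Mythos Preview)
Your overall architecture---build windows along the itinerary, verify correct alignment, invoke the shadowing lemma---matches the paper, and your treatment of the transition-chain and (A6$'$) portions is close in spirit to what the paper does (though the paper organizes this by working entirely with $2$-dimensional windows in $\Lambda$, correctly aligned under the scattering map $S$ alternating with powers of $f_{\mid\Lambda}$, and only at the end invokes a dedicated lifting lemma to promote these to full-dimensional windows in $M$). The genuine gap is in the Aubry--Mather block, and it is not merely technical.

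Your proposed windows are thin neighborhoods of $f^j(C_{\omega^k_s})$ restricted to a $\phi$-strip. But the essential circles $C_{\omega^k_s}$ of (A7) are \emph{not invariant}; only the Aubry--Mather set $\Sigma_{\omega^k_s}\subset C_{\omega^k_s}$ is. Under iteration $f^j(C_{\omega^k_s})$ ceases to be a graph and wanders through the BZI, so ``thin neighborhood of $f^j(C_{\omega^k_s})$'' does not give a window with any usable exit/entry structure, and there is no mechanism forcing correct alignment from step $j$ to step $j+1$. The order-preservation you invoke holds only on $\Sigma_{\omega^k_s}$ itself, not on the ghost circle, so it does not control the $I$-direction of your windows. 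The paper's mechanism is entirely different: it uses Hall's machinery of \emph{positive and negative diagonals} in vertical strips $B_{w_s,\bar w_s}$, where $w_s,\bar w_s$ are endpoints of a gap of $\Sigma_{\omega^k_s}$. The key property is hereditary: if $D$ is a positive diagonal in $B_{z_0,z_1}$, then $f(D)\cap B_{f(z_0),f(z_1)}$ contains a positive diagonal. This gives a nested sequence of diagonals in a fixed strip whose points automatically satisfy the cyclic-ordering condition \eqref{eqn:main22}, and which can be continued across the BZI using that positive and negative diagonals in the same strip always intersect. No tracking of $C_{\omega^k_s}$ is attempted.

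A second, related gap: for the BZI crossing you appeal to Birkhoff's connecting theorem and propose to ``cover this segment by a finite chain of small windows.'' But Birkhoff gives you a single orbit, not an open set with controlled boundary, and covering one orbit by small windows does not produce correct alignment. Moreover, the end tori $T_{i_k}$, $T_{i_k+1}$ are not assumed topologically transitive (condition (A5)(iii) covers only the interior tori), so you cannot reposition the entry and exit windows along them. The paper proves a strengthened connecting theorem: from any one-sided neighborhood $U$ of a point on $T_{i_k}$ to any one-sided neighborhood $V$ of a point on $T_{i_k+1}$, there is an orbit taking $\partial U$ to $\partial V$ (and, under (A7), satisfying the ordering condition en route). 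The boundary-to-boundary conclusion is exactly what lets one build a window about $\partial U$ correctly aligned with a window about $\partial V$ under a single power of $f_{\mid\Lambda}$; the proof of this strengthening again relies on Hall's diagonals and on the positive-tilt property of iterated vertical segments, and is the technical heart of the paper.
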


\begin{figure} \centering
\includegraphics*[width=0.7\textwidth, clip, keepaspectratio]
{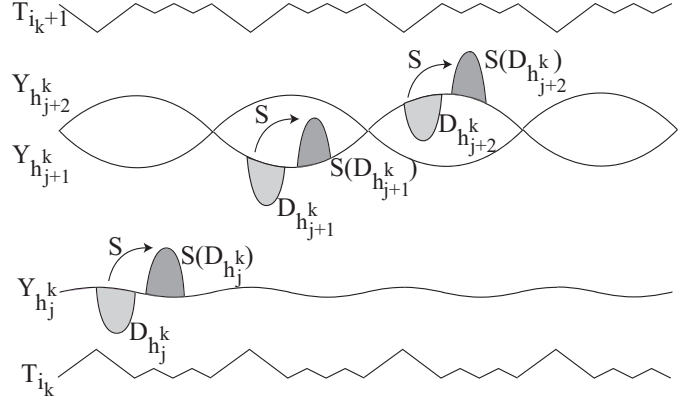}
    \caption[]{An illustration  of the condition (A6$'$). In the  figure, inside the region between $T_{i_k}$ and $T_{i_{k}+1}$, three invariant primary tori are shown, $\Upsilon_{h^k_j}$, $\Upsilon_{h^k_{j+1}}$, $\Upsilon_{h^k_{j+2}}$, where  $\Upsilon_{h^k_j}$  is as in (A6$'$-i-a), and $\Upsilon_{h^k_{j+1}}$, $\Upsilon_{h^k_{j+2}}$ are as in (A6$'$-i-b). The vertical ordering of the tori stated condition (A6$'$-ii) is depicted.  For each of the tori $\Upsilon_{h^k_j}$, $\Upsilon_{h^k_{j+1}}$, $\Upsilon_{h^k_{j+2}}$, the condition (A6$'$-iii) is also illustrated.}
    \label{fig:a6prime}
\end{figure}

Theorem \ref{thm:main1} asserts that if the conditions (A1)-(A6), or (A1)-(A5) and (A6$'$) are satisfied, then there exists an orbit that shadows all tori in the transition chains in the prescribed order, and also crosses over the large gaps that  separate the successive transition chains. In particular,  there exists an orbit that travels arbitrarily far with respect to the action variable, and there also exists  an orbit that executes chaotic excursions.
Additionally, if some  Aubry-Mather sets are prescribed inside each gap  that  separates the successive transition chains, as in condition (A7), then there exists an orbit  that, besides shadowing  the  transition chains, it  also shadows the Aubry-Mather sets in the prescribed order. Note that the tori in the transition chains are shadowed in the sense that the diffusing orbit gets arbitrarily close to these tori. However, the Aubry-Mather sets are shadowed in the sense of the cyclical ordering: for each prescribed  Aubry-Mather set,  the diffusing orbit stays between the orbits of two points in the Aubry-Mather set, relative to the $\phi$-coordinate, for any prescribed time interval.

Now we explain  each assumption.

Assumption (A1) describes a $C^r$-differentiable, discrete dynamical system.
In applications, the map $f$ represents the first return map to a Poincar\'e section associated to a flow. In many examples of interest the flow is a Hamiltonian flow.

Assumption (A2) prescribes  the existence of a normally hyperbolic invariant manifold $\Lambda$ for $f$, which is diffeomorphic to an annulus.
The differentiability class $r$  and the contraction  and expansion rates along the stable and unstable bundles on   $\Lambda$ are chosen  so that the manifolds $\Lambda$, $W^u(\Lambda)$, $W^s(\Lambda)$, $\Gamma$ are at least $C^2$-differentiable, and the scattering map $S$ associated to $\Gamma$ is at least $C^1$-differentiable. The relations between the  rates and the differentiability of these objects is given explicitly in Subsection \ref{section:scattering}.

In many examples of nearly integrable Hamiltonian systems, e.g. \cite{Arnold64}, one can identify   a normally hyperbolic invariant manifold $\Lambda_0$ in the unperturbed system, and use the standard theory of normal hyperbolicity
to establish the persistence of a normally hyperbolic invariant manifold $\Lambda_\eps$ diffeomorphic to $\Lambda_0$ for the perturbed system, for all sufficiently small perturbation parameters $\eps\neq 0$.
There also exist examples, e.g. from celestial mechanics, were the existence of a normally hyperbolic manifold can be established through a computer assisted proof (see \cite{Capinski09}).
We note that the assumption (A2) does not require that the stable and unstable manifolds of $\Lambda$ have equal dimensions, thus our setting includes dynamical systems that are not Hamiltonian.

Assumption (A3) is satisfied  automatically in examples like the weakly-coupled pendulum-rotator system considered in \cite{DelshamsLS2006}, or the periodically perturbed geodesic flow on a torus considered in \cite{DelshamsLS00,Kaloshin2003}. Some properties of area preserving, monotone twist maps of the annulus are reviewed in Section~\ref{section:aubrymathersets}.

Assumption (A4) asserts that the stable and unstable manifolds of $\Lambda$ have a transverse intersection $\Gamma$ along a homoclinic manifold $\Gamma$, such that the scattering map associated to $\Gamma$ is well defined, and hence is  $C^1$. See Subsection \ref{section:scattering}. In perturbed  systems one often uses a Melnikov method to establish the existence, and the persistence for all sufficiently small values of the perturbation, of a transverse intersection of the invariant manifolds.

Assumption (A5) prescribes the existence of a bi-infinite collection  $\{T_{i}\}_{i\in\mathbb{Z}}$ of invariant primary Lipschitz tori that can be grouped into transition chains of the type $\{T_i\}_{i=i_k+1,\ldots,i_{k+1}}$.

Assumption (A5)-(i) requires that each torus $T_i$ intersects the domain and the range of the scattering map.

Assumption (A5)-(ii) requires  that each torus in a transition chain\break $\{T_i\}_{i=i_k+1,\ldots,i_{k+1}}$ is mapped by the scattering map topologically transversally across the next torus in the chain.  Since the tori are only Lipschitz, topological transversality (topological crossing)   is used in place of differentiable transversality. The definition of topological crossing can be found in \cite{BurnsW95}.  Roughly speaking, two manifolds are topologically crossing if they can be made differentiably transverse with non-zero  oriented intersection number by the means
of a sufficiently small homotopy. In our case the two manifolds are two $1$-dimensional arcs of $S(T_i)$ and of $T_{i+1}$ in $\Lambda$. From (A5)-(ii), it follows as in \cite{DelshamsLS08a}  that $W^u(T_i)$ has a topologically transverse  intersection point with $W^s(T_{i+1})$. Condition (A5)-(iii) requires that each torus in $\{T_{i}\}_{i=i_{k}+1, \ldots, i_{k+1}}$, except for the end tori, are topologically transitive.
Assumption (A5)-(iv) says that all  tori in the transition chain except for the end ones  can be $C^0$-approximated from both ways by some other invariant primary tori in $\Lambda$, not necessarily from the transition chain. This means that for each $i\in\{i_{k}+2, \ldots, i_{k+1}-1\}$ there exist two sequences of invariant primary tori
$(T_{j^-_l(i)})_{l\geq 1},(T_{j^+_l(i)})_{l\geq 1}$ in $\Lambda$ that approach $T_i$ in
the $C^0$-topology, such that the annulus bounded by $T_{j^-_l(i)}$
and $T_{j^+_l(i)}$ contains $T_i$ in its interior for all $l$.

Assumption (A6) says that every pair of successive transition chains\break   $\{T_{i}\}_{i=i_{k-1}+1, \ldots, i_{k}}$ and $\{T_{i}\}_{i=i_{k}+1, \ldots, i_{k+1}}$ is separated by the region between $T_{i_{k}}$ and $T_{i_{k}+1}$ which contains no invariant primary torus in its interior. A region in an annulus that is bounded by two invariant primary tori and contains no invariant primary torus in its interior is referred as a Birkhoff Zone of Instability (BZI). The boundary tori have in general only Lipschitz regularity.

Assumptions (A5) and (A6) describe a geometric structure that is typical for the large gap problem for \textit{a priori} unstable Hamiltonian systems. In such systems, Melnikov theory implies that $W^u(\Lambda)$ intersects transversally  $W^s(\Lambda)$ at an angle of order $\eps$, where $\eps$ is the size of  the perturbation. The KAM theorem yields a Cantor family of smooth, invariant primary tori that survives the perturbation. The family of tori is interrupted by `large gaps' of order  $\eps^{1/2}$ located at the  resonant regions. Using the transverse intersection between $W^u(\Lambda)$ and $W^s(\Lambda)$, one can find heteroclinic connections between KAM tori that are sufficiently close, within order $\eps$, from one another, and thus form transition chains of tori. Since the large gaps are of order $\eps^{1/2}$ and the splitting size of $W^u(\Lambda)$, $W^s(\Lambda)$ is only order $\eps$,  the transition chain mechanism cannot be extended across the large gaps. In our model, the large gaps  are modeled by BZI's, as in assumption (A6).  This can be achieved by extending the transition chains to maximal transition chains, that go from the boundary of one large gap to the boundary of the next large gap. The intermediate tori in the chain can be chosen as  KAM tori: therefore the assumption that these tori are topologically transitive and are $C^0$-approximable from both sides by other tori is satisfied in such cases. This may not be the case for the  tori at the ends of the transition chains.

Assumption (A7) says that inside each BZI between $T_{i_k}$ and $T_{i_{k}+1}$ there is a  prescribed collection of Aubry-Mather sets $\{\Sigma _{\omega^k_1}, \Sigma _{\omega^k_2},\ldots, \Sigma _{\omega^k_{s_k}}\}$ that is vertically ordered. The vertical ordering means that the Aubry-Mather sets lie on essential (non-invariant) circles $C_{\omega^k_s}$ that are graphs over the $\phi$-coordinate of the annulus, and with
 $C_{\omega^k_s}$ below $C_{\omega^k_{s'}}$ provided $\omega^k_s<\omega^k_{s'}$; we write $C_{\omega^k_s}\prec C_{\omega^k_{s'}}$. The vertical ordering of the Aubry-Mather sets is shown for example in \cite{Gole01}.

Assumption (A6$'$) is a relaxation of (A6).  Instead of requiring that the region in $\Lambda$ between $T_{i_{k}}$ and $T_{i_{k}+1}$ is a BZI, it allows the existence of finitely many  invariant primary tori $\{\Upsilon_{h^k_j}\}_{j=1, \ldots , l_k}$ that separate the region into disjoint components. These invariant primary tori are either isolated or else they consist of  hyperbolic periodic  points together with branches of their stable and unstable manifolds which are assumed to coincide.
We require that the image of each $\Upsilon_{h^k_j}$ under $S$  satisfies a certain transversality condition with $\Upsilon_{h^k_{j+1}}$ that allows one to use the scattering map in order to move points from one side of the set to the other side of the set. We note that isolated invariant tori and
hyperbolic periodic points whose stable and unstable manifolds coincide do not occur in generic systems.

\section{Application}

We apply Theorem \ref{thm:main1} to show  the existence of diffusing orbits in an example of a nearly integrable Hamiltonian system. Let
\begin{eqnarray}\label{eqn:hamiltonian}H_\eps(p, q, I,\phi,
t)&=&h_0(I)\pm\left(\frac{1}{2}p^2+V(q)\right)+\eps h(p, q,I,\phi, t; \eps),
\end{eqnarray}
where $(p, q, I,\phi, t)\in\mathbb{R}\times \mathbb{T}^1\times
\mathbb{R}\times\mathbb{T}^1\times\mathbb{T}^1$ and $h$ is a trigonometric polynomial in $(\phi,t)$.
Here $h_0(I)$ represents a rotator, $P_{\pm}(p,q)=\pm(\frac{1}{2}p^2+V(q))$ represents a pendulum, and $\eps h$ a small, periodic coupling.
We assume that $V$, $h_0$ and $h$ are uniformly $C^r$ for some $r$ sufficiently large.
We  assume that  $V$ is periodic in $q$ of
period $1$ and has a unique non-degenerate global maximum; this implies
that the pendulum has  a   homoclinic orbit $(p^0 (\sigma),q^0 (\sigma))$ to $(0,0)$, with  $\sigma\in\mathbb{R}$.
We also assume  that
$h_0$ satisfies a uniform twist condition $\partial^2
h_0/\partial I^2>\theta$, for some $\theta>0$, and for all $I$ in
some interval $(I^-, I^+)$, with $I^-<I^+$ independent of $\eps$.

The Melnikov potential for the homoclinic orbit $(p^0 (\sigma),q^0 (\sigma))$ is defined by
\begin{eqnarray*}
\mathcal{L}(I,\phi,t)=&\displaystyle-\int_{-\infty}^{\infty}&\left
[h(p^0(\sigma),q^0(\sigma),I,\phi+\omega(I)\sigma,t+\sigma;0)\right .\\
& &\left .-h(0,0,I,\phi+\omega(I)\sigma,t+\sigma;0)\right ]d\sigma,
\end{eqnarray*}
where $\omega(I)=(\partial h_0/\partial I)(I)$.

We assume the following non-degeneracy conditions on the Melnikov
potential:
\begin{itemize}
\item[(i)] For each $I\in(I^-,I^+)$, and each
$(\phi, t)$ in some open set in $\mathbb{T}^1\times \mathbb{T}^1$, the
map
\[\tau\in\mathbb{R} \to
\mathcal{L}(I,\phi-\omega(I)\tau,t-\tau)\in\mathbb{R}\] has a non-degenerate
critical point $\tau^*$, which can be parameterized as
\[\tau^*=\tau^*(I,\phi,t).\]

\item[(ii)] For each $(I,\phi, t)$ as above, the
function\[(I,\phi,t)\to
\frac{\partial{\mathcal{L}}}{\partial\phi}(I,\phi-\omega(I)\tau^*,t-\tau^*)
\] is non-constant, negative in the case of  $P_{-}$,  and positive in the case of $P_{+}$. \end{itemize}

This example and the above conditions are considered in \cite{DelshamsLS2006}. There are some additional  non-degeneracy conditions on $h$ and $\partial h/\partial \eps$ that are required in \cite{DelshamsLS2006}; we do not need to assume those conditions here.

Now we verify the conditions (A1)-(A6) from Section \ref{sec:mainresult} for  this model. We will rely heavily on the estimates from \cite{DelshamsLS2006}.

\textit{Condition (A1).} The time-dependent Hamiltonian in \eqref{eqn:hamiltonian} is transformed into an autonomous Hamiltonian by introducing a new variable $A$, symplectically conjugate with $t$ obtaining  \begin{equation}\label{eqn:hamiltonianex} \tilde H_\eps(p, q, I,\phi,
A,t)=h_0(I)\pm(\frac{1}{2}p^2+V(q))+A+\eps h(p, q,I,\phi, t; \eps),\end{equation}
where $(p, q, I,\phi, A,t)\in(\mathbb{R}\times \mathbb{T}^1)^3$.
We  fix an energy manifold $\{\tilde H_\eps=\tilde h\}$ for some $\tilde h$, and restrict to the Poincar\'e section $\{t=1\}$ for the Hamiltonian flow. The resulting  manifold is a $4$-dimensional manifold $M_\epsilon$ parametrized by some coordinates $(p_\eps,q_\eps,I_\eps,\phi_\eps)$.
The first return map to $M_\eps$ of the Hamiltonian flow is a $C^r$-differentiable map $f_\epsilon$.

\textit{Condition (A2).} In the unperturbed case $\eps=0$, the manifold \[\Lambda_0:=\{(p,q,I,\phi)\,|\, p=q=0 \}\] is a normally hyperbolic invariant manifold for $f_0$. The dynamics on $\Lambda_0$ is given by an integrable twist map, and $\Lambda_0$ is foliated by invariant $1$-dimensional tori. For the perturbed system, $\Lambda_0$ can be continued to a manifold $\Lambda_\eps$ diffeomorphic to $\Lambda_0$,  that  is locally invariant for the perturbed flow for all $\eps$ sufficiently small.
The theory of normal hyperbolicity \cite{HirschPS77} can be used as in \cite{DelshamsLS2006} to show that $\Lambda_\eps$   can be extended so that it is a
normally hyperbolic invariant manifold for the flow.   Each point in $\Lambda_\eps$ has $1$-dimensional stable and unstable manifolds. The regularity of $h_0$ and the uniform twist condition allows one to apply the KAM theorem and conclude the existence of a KAM family of primary invariant tori in $\Lambda_\eps$ that survives the perturbation, for all $\eps$ sufficiently small.

\textit{Condition (A3).} The map $f_\eps$ is symplectic. This plus the twist condition on $h_0$ implies that $f_\eps$ restricted to $\Lambda_\eps$ is an area preserving, monotone twist map.

\textit{Condition (A4).} The non-degeneracy conditions on the Melnikov function imply that $W^u(\Lambda_\eps)$ and $W^s(\Lambda_\eps)$ have a transverse intersection along a homoclinic manifold $\Gamma_\eps$, provided  $\eps$ is sufficiently small. Moreover, it is shown in \cite{DelshamsLS2006,DelshamsLS08a} that by restricting to some convenient homoclinic manifold $\Gamma_\eps$ one can ensure that the maps $\Omega_\eps^\pm:\Gamma_\eps\to\Lambda_\eps$ are diffeomorphisms onto their images; thus the scattering map $S_\eps: U^- _\eps\to  U^+ _\eps$ is a diffeomorphism between some two open sets $U^-_\eps, U^+_\eps\subseteq \Lambda_\eps$, of size $O(1)$. For $\eps$ fixed to some sufficiently small value, we let $\Gamma:=\Gamma_\eps$ and $S:=S_\eps$.

\textit{Conditions (A5),(A6) and (A6$'$)} The paper \cite{DelshamsLS2006} applies an averaging procedure to reduce the dynamics on $\Lambda_\eps$ to a normal form up to $O(\eps^2)$ away from resonances. The averaging procedure fails within the resonant  regions, corresponding to the values $I_\eps(k,l)$ of the action variable where $k\omega(I)+l=0$. A resonance is said to be of order $j$ if the $j$-th order averaging cannot be applied about the corresponding action level set.

Since $h$ is a trigonometric polynomial, one has to deal with only finitely many resonant regions.
Outside the resonant regions one applies the KAM theorem and obtain KAM tori that are at a distance of order $O(\eps^{3/2})$ from one another. The resonant regions yield  gaps between KAM tori of size $O(\eps^{j/2})$, where $j$ is the  order of the resonance. Only the resonances of order $1$ and $2$ are of interest, as they produce gaps of size $O(\eps)$ and $O(\eps^{1/2})$ respectively. Inside each resonant region, the system can be approximated by a system similar to a pendulum. In such a region, under appropriate non-degeneracy conditions, it is shown that there exist primary KAM tori close to the separatrices of the pendulum, secondary KAM tori (homotopically trivial), and stable and unstable manifolds of hyperbolic periodic orbits that pass close to the separatrices of the pendulum. Moreover, these objects can be chosen to be $O(\eps^{3/2})$ from one another.
In the generic case when the stable and unstable manifolds of hyperbolic periodic orbit intersect transversally, and there are no isolated invariant tori, a resonant region determines a BZI as in (A6). In the non-generic case when the   stable and unstable manifolds of a hyperbolic periodic orbit  coincide, or there exist isolated invariant tori, the resonant region is as described  as in  (A6$'$).
The estimates from \cite{DelshamsLS2006} imply that there exist primary KAM tori that are within $O(\eps^{3/2})$ from the boundaries of the  gap, or to the stable and unstable manifolds of the hyperbolic periodic orbits inside the resonant regions.  These estimates do not allow one to precisely locate the boundaries of the BZI's or to say anything about their dynamics.

The Melnikov conditions imply that the scattering map $S_\eps$
associated to this homoclinic channel $\Gamma_\eps$ can be computed in terms of
the Melnikov potential $\mathcal{L}$. If $S_\eps(x^-)=x^+$, then the change in the $I_\eps$-coordinate  under  $S_\eps$ is given by
\begin{equation}\label{eqn:scatteringjump} I_\eps(x^+)-I_\eps(x^-)=-\eps\frac{\partial \mathcal{L}}{\partial \phi}(I_\eps,\phi_\eps -\omega(I_\eps)\tau^*, t-\tau^*)+O_{C^1}(\eps^{1+\varrho}),\end{equation}
for some $\varrho>0$. Condition (ii) implies that there are points in the domain of the scattering map $S$ whose $I_\eps$-coordinate is increased by $O(\eps)$ under $S$.

We can use these estimates to construct transition chains of invariant primary tori alternating with gaps, as in (A5) and (A6), or as in (A5) and  (A6$'$).  For $\eps$ sufficiently small and fixed, we let $M:=M_\eps$, $f:=f_\eps$, and $\Lambda$ be the annulus in $\Lambda_\eps$ bounded by  a pair of tori $T_{I_a}$, $T_{I_b}$ with $I^-<I_a<I_b<I^+$.

First, we choose a sequence of resonant regions and non-resonant regions that intersect the domain $U^-$ and the range $U^+$ of the scattering map.
Since the KAM primary tori are within $O(\eps^{3/2})$ from one another, and the scattering map makes jumps of order $O(\eps)$ in the increasing direction of $I_\eps$, then we can find smooth KAM primary tori $\{T_{i_{k}+2}, T_{i_{k}+2}, \ldots, T_{i_{k+1}-1}\}$ such that $W^u(T_i)$ has a transverse intersection with $W^s(T_{i+1})$ for all $i\in\{{i_{k}+2}, {i_{k}+3}, \ldots, {i_{k+1}-1}\}$, and that $T_{i_{k}+2}$ and $T_{i_{k+1}-1}$ are within $O(\eps^{3/2})$ from the separatrices of the penduli corresponding to two consecutive resonant gaps of orderer $1$ or $2$.   The dynamics on each such a torus is quasi-periodic, so is topologically transitive. This ensures condition (A5)-(iii). Moreover, we can choose these KAM tori so that they are `interior' to the Cantor family of tori, i.e. they can be approximated from both sides by other KAM primary tori. This ensures condition (A5)-(iv).
To the transition chain $\{T_{i_{k}+2}, T_{i_{k}+3}, \ldots, T_{i_{k+1}-1}\}$ we add, at each end, a torus $T_{i_{k}+1}$ and  a torus $T_{i_{k+1}}$. These end tori bound resonant gaps  that are either BZI's or consist of hyperbolic periodic orbits together with their invariant manifolds. Since $T_{i_{k}+1},T_{i_{k+1}}$ are within $O(\eps^{3/2})$ from $T_{i_{k}+2},T_{i_{k+1}-1}$, respectively, and the scattering map $S_\eps$ makes jumps by order $O(\eps)$, it follows that $S(T_{i_{k}+1})$ topologically crosses $T_{i_{k}+2}$, and $S(T_{i_{k+1}-1})$ topologically crosses $T_{i_{k+1}}$. This ensures condition (A5)-(ii). Condition (A1)-(i) is ensured automatically by our initial choice of the resonant regions and the non-resonant regions so that they intersect the domain $U^-$ and the range $U^+$ of $S_\eps$. The end tori $T_{i_{k}+1}$ and $T_{i_{k+1}}$ are at the boundaries of  two consecutive resonant gaps. This construction is continued for all resonant and non-resonant regions. Thus, for $\eps$ fixed and sufficiently small,  we obtain sequences of tori $\{T_{i_{k}+1}, T_{i_{k}+2}, \ldots, T_{i_{k+1}}\}$ as in (A5), interspersed with gaps between  $T_{i_{k}}$ and $T_{i_{k}+1}$, and also between $T_{i_{k+1}}$ and $T_{i_{k+1}+1}$, as in (A6).

We are under the assumption of  Theorem \ref{thm:main1}. Then there exists a  diffusing orbit that shadows the transition chains of invariant primary tori and crosses the prescribed gaps. In particular, if we choose an initial torus $T_{I_a}$ and a final torus $T_{I_b}$ so that they are $O(1)$ apart, we obtain a diffusing orbit whose action variable changes by $O(1)$. We note our theorem applies even for the choice of the pendulum $P_-$, when the unperturbed Hamiltonian does not have  positive-definite normal torsion. The assumption of positive definiteness seems to be very important for variational methods.

We emphasize that, although we are using many of the estimates from  \cite{DelshamsLS2006}, we obtain a  different mechanism of diffusion.  Our mechanism still involves transition chains of invariant primary tori, but uses the inner dynamics restricted to the normally hyperbolic invariant manifold to cross over the large gaps. The paper \cite{DelshamsLS2006} identifies secondary tori and hyperbolic invariant manifolds of lower dimensional tori inside the large gaps, and forms   transition chains of such  objects that can be joined with the transition chains of primary tori. Hence, it still uses the outer dynamics to cross over those gaps.

Since in our approach we do not use transition chains of secondary tori or of hyperbolic invariant manifolds of lower dimensional tori, we do not need to assume the additional  non-degeneracy conditions  on the scattering map acting on these objects as in  \cite{DelshamsLS2006}.

Moreover, one can combine the topological mechanism in this paper with the one in \cite{GideaL06} and obtain diffusing orbits that visit any given collection of primary tori, secondary tori, invariant manifolds of lower dimensional tori, and Aubry-Mather sets, in any prescribed order.

\section{Background on twist maps and Aubry-Mather sets}\label{section:aubrymathersets}

Let $\tilde A = \mathbb{T}^1 \times [0,1]= \{(x, y)\in \mathbb{T}^1 \times[0,1]\}$ be an annulus, and let $A=\mathbb{R}\times [0,1]$ be its universal cover  with the natural projection $\pi:A\to \tilde A$ given by $\pi(x,y)=(\tilde x,\tilde y)$, where $\tilde x=x (\textrm{mod } 1)$ and $\tilde y=y$. Let $\pi_x$ be the projection onto the first component, and $\pi_y$ be the projection onto the second component. Let $\tilde{f}:\tilde A\to \tilde A$ be a $C^1$  mapping on $\tilde A$, and let $f: A\to A$ be the unique lift of $\tilde f$ to $A$ satisfying $\pi_x(f(0,0))\in [0,1)$ and $\pi\circ f=\tilde f\circ \pi$.  In order to simplify the notation, below we will not make distinction between $\tilde{A}$ and $A$, and between $\tilde f$ and $f$.

We assume that $f$ is orientation preserving, boundary preserving, area preserving, and its satisfies a    monotone twist condition, i.e.,
  $|\partial (\pi_{x}\circ  f)/\partial y|>0$ at all points in the annulus.

We note that the above properties imply that $f$ is exact symplectic, i.e. $ f$ has zero flux, meaning that for any rotational curve $\gamma$ the area of the regions above $\gamma$ and below $f(\gamma)$ equals the area below $\gamma$ and
above $f(\gamma)$.

In the sequel we will assume that   $f$ is a positive twist, meaning that\break $\partial (\pi_{x}\circ f)/\partial y>0$ at all points. The map $ f$ restricted to the boundary components $\mathbb{T}^1 \times \{0\}$, $\mathbb{T}^1 \times \{1\}$ of the annulus  has well defined rotation numbers $\omega_-,\omega_+$, respectively, with $\omega^-<\omega^+$.  We will assume that $\omega^-,\omega^+>0$.

By an invariant primary
torus (essential invariant circle) we mean a
$1$-dimensional torus $T$ invariant under $f$  that cannot
be homotopically deformed into a point inside the annulus. Since $f$
is a monotone twist map, each invariant   primary torus $T$ is the
graph of some Lipschitz function (see \cite{Birkhoff1,Birkhoff2}).

A region in $A$ between
two invariant primary tori $T_1$ and $T_2$  is called a Birkhoff Zone of Instability (BZI) provided that there is no invariant primary torus in
the interior of the region.

It is known that, for an area preserving monotone twist map $f$ of $A$,
given a BZI, there exist Birkhoff connecting orbits that go
from any neighborhood of one boundary torus
to any neighborhood of the other boundary
torus (see \cite{Birkhoff1,Birkhoff2,LeCalvez87}). We have the following results:

\begin{thm}[Birkhoff Connecting Theorem]\label{thm:birkhoff} Suppose that $T_1$ and $T_2$ bound a BZI.
For every pair of neighborhoods $U$ of $T_1$ and $V$ of $T_2$ there exist a point $z\in U$  and an integer  $N>0$ such that $f^{N}(z)\in V$.

\end{thm}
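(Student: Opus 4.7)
The strategy is Birkhoff's classical contradiction argument. I would suppose $f^n(U) \cap V = \emptyset$ for every $n \geq 0$, where after shrinking we may take $U$ and $V$ inside the closed annular region $R$ bounded by $T_1$ and $T_2$ (which is $f$-invariant since $T_1, T_2$ are). Set
\[
\Omega = \bigcup_{n \geq 0} f^n(U) \subseteq R \setminus V,
\]
an open set containing $T_1$ and satisfying $f(\Omega) \subseteq \Omega$.

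The first step would be to promote forward invariance to full invariance: since $f$ is area preserving and $\mathrm{Area}(R) < \infty$, the open set $\Omega \setminus f(\Omega)$ has zero area and is therefore empty, so $\Omega = f(\Omega)$. The closure $\bar\Omega$ is then a closed $f$-invariant essential subset of $R$ containing $T_1$ and disjoint from $V$, hence bounded strictly away from $T_2$. The second step, the heart of the argument, would be to extract an invariant Lipschitz essential graph from the upper envelope
\[
\Gamma = \{\,(x, y^*(x)) : x \in \mathbb{T}^1\,\}, \qquad y^*(x) = \sup\{\, y : (x, y) \in \bar\Omega \,\}.
\]
Here I would invoke Birkhoff's classical graph theorem for positive monotone twist maps, as developed in \cite{Birkhoff1, Birkhoff2, LeCalvez87}: the upper boundary of a closed essential $f$-invariant set is the graph of an $f$-invariant Lipschitz function. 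This identifies $\Gamma$ as an invariant primary Lipschitz torus. The third step is immediate: $\Gamma$ lies strictly between $T_1$ and $T_2$, so it sits in the interior of $R$, contradicting the assumption that $R$ is a Birkhoff Zone of Instability.

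The main obstacle is the graph property in the second step. A priori $y^*$ is only upper semicontinuous, and $\bar\Omega$ may have intricate topology near its upper boundary: vertical slits, isolated spikes, or pinch points would ruin the naive construction. The essential input is the positive monotone twist. Any vertical jump in $\Gamma$ at some $x_0$ would, upon one iteration, be spread into a horizontal interval of uncertainty in the fiber over $f(x_0)$, and, combined with full invariance of $\bar\Omega$, this vertical instability propagates and forces $\bar\Omega$ to accumulate on $T_2$, contradicting $\bar\Omega \cap V = \emptyset$. One can either cite Birkhoff's theorem as a black box or reproduce the short twist-plus-area-preservation proof directly; either way, the interplay of the monotone twist with area preservation, already used to upgrade $\Omega$'s forward invariance to full invariance, is what drives the entire argument.
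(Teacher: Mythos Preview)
Your argument is the classical Birkhoff contradiction proof and is correct. Note, however, that the paper does not supply its own proof of this theorem: it is stated as a known result with citations to \cite{Birkhoff1,Birkhoff2,LeCalvez87}, so there is nothing to compare against beyond observing that your sketch reproduces the standard argument found in those references.
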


\begin{cor}\label{cor:birkhoff} Suppose that $T_1$ and $T_2$ bound a BZI, and that the restrictions of $f$ to $T_1$ and $T_2$ are topologically transitive.
For every  $\zeta_1\in T_1,\zeta_2\in T_2$  and every pair of neighborhoods $U$ of $\zeta_1$ and $V$ of $\zeta_2$,  there exist a point $z\in U$  and an integer $N>0$ such that $f^{N}(z)\in V$.
\end{cor}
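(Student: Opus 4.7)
The plan is to bootstrap Theorem \ref{thm:birkhoff} using the topological transitivity of $f\vert_{T_i}$ to shuttle points along the boundary tori into and out of the prescribed neighborhoods. The key observation is that each $f\vert_{T_i}$ is a topologically transitive orientation-preserving circle homeomorphism, hence conjugate to an irrational rotation by Poincar\'e's classical theorem and therefore minimal; in particular, every point of $T_i$ has dense forward and backward orbits under $f\vert_{T_i}$.

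First I would work on $T_2$: for each $q\in T_2$, density of the forward orbit of $q$ in $T_2$ yields an integer $m_q>0$ with $f^{m_q}(q)\in V\cap T_2$, and continuity of $f^{m_q}$ provides an open neighborhood $M_q\subset A$ of $q$ with $f^{m_q}(M_q)\subset V$. Compactness of $T_2$ extracts a finite subcover $M_{q_1},\dots,M_{q_\ell}$, whose union $\tilde V$ is an open neighborhood of $T_2$ with the property that every $y\in\tilde V$ satisfies $f^{m_{q_i}}(y)\in V$ for some $i$. Symmetrically on $T_1$, I would use density of \emph{backward} orbits: for each $p\in T_1$, obtain $n_p>0$ with $f^{-n_p}(p)\in U\cap T_1$, take an open neighborhood $N_p\subset A$ of $p$ with $f^{-n_p}(N_p)\subset U$, pass to a finite subcover of $T_1$, and let $\tilde U$ be its union, an open neighborhood of $T_1$ with the property that every $x\in\tilde U$ has $f^{-n_{p_j}}(x)\in U$ for some $j$.

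Applying Theorem \ref{thm:birkhoff} to the BZI bounded by $T_1,T_2$ with the neighborhoods $\tilde U,\tilde V$ produces a point $z_0\in\tilde U$ and an integer $M>0$ such that $f^M(z_0)\in\tilde V$. Selecting indices with $z_0\in N_{p_j}$ and $f^M(z_0)\in M_{q_i}$, the point $z:=f^{-n_{p_j}}(z_0)$ lies in $U$, and $f^{N}(z)=f^{M+m_{q_i}}(z_0)\in V$ for $N:=n_{p_j}+M+m_{q_i}>0$. The substance of the argument is just Theorem \ref{thm:birkhoff}; the rest is a routine compactness-continuity refinement. The only conceptual point requiring care is the upgrade from topological transitivity to minimality for $f\vert_{T_i}$, which is precisely what makes density of both forward and backward orbits simultaneously available on each torus and thereby lets us realize both $\tilde U,\tilde V$ and the specified targets $U,V$ within a single finite shadowing time.
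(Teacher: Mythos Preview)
Your proof is correct. The paper states Corollary~\ref{cor:birkhoff} without proof, treating it as an immediate consequence of Theorem~\ref{thm:birkhoff}; your argument supplies exactly the routine details one would expect. The upgrade from topological transitivity to minimality via Poincar\'e's classification is the right observation (and in fact transitivity alone already forces orientation preservation, since an orientation-reversing circle homeomorphism has fixed points), and the compactness--continuity covering argument is a clean way to manufacture the tubular neighborhoods $\tilde U,\tilde V$ to which Theorem~\ref{thm:birkhoff} applies. One could shorten slightly by taking $\tilde U=\bigcup_{n\ge 0}f^n(U)$ and $\tilde V=\bigcup_{n\ge 0}f^{-n}(V)$ directly---minimality of $f\vert_{T_i}$ makes these full neighborhoods of $T_1,T_2$---but this is cosmetic. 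It is worth noting that later in the paper (Theorem~\ref{thm:extensionbirkhoff2}) the authors go to considerable length to prove the same conclusion \emph{without} the transitivity hypothesis on the boundary tori, using Aubry--Mather sets and tilted-curve arguments; your proof is the ``easy'' version that the corollary is meant to record.
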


A subset $M\subseteq A$ is said to be monotone (cyclically ordered) if $\pi_x(z_1)<\pi_x(z_2)$ implies $\pi_x(f(z_1))<\pi_x(f(z_2))$  for all $z_1,z_2\in M$. For $z\in A$ the extended orbit of $z$ is the set $EO(z)=\{f^n(z)+(j,0)\,:\,n,j\in\mathbb{Z}\}$. The orbit of $z$ is said to be monotone (cyclically ordered) if the set $EO(z)$  is monotone. If the orbit of $z\in A$ is monotone, then the rotation number $\rho(z)=\lim_{n\to\infty}(\pi_x(f^n(z))/n)$ exists. We denote $\textrm{Rot}(\omega)=\{z\in A\,:\,\rho(z)=\omega\}$.
All points in the same monotone set have the same rotation number.

\begin{defn}\label{defn:aubrymather}  An Aubry-Mather set for $\omega\in \mathbb{T}^1$ is a minimal, monotone,  $f$-invariant subset of $\textrm{Rot}(\omega)$.
\end{defn}

Here by a minimal set we mean a closed invariant set that does not contain any proper closed invariant subsets. (Equivalently, the orbit of every point in the set is dense in the set.)  This should not be confused with action-minimizing or $h$-minimal sets, where $h$ is a generating function for $f$.

\begin{thm}[Aubry-Mather Theorem]\label{thm:aubrymather}
For every $\omega\in[\omega^-,\omega^+]$, there exists a non-empty Aubry-Mather set $\Sigma_\omega$ in $\textrm{Rot}(\omega)$.
\end{thm}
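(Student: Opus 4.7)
The plan is to give the classical variational proof due to Aubry and Mather. Since $f$ is an exact-symplectic monotone twist map of the annulus, it admits a lift to the universal cover with a generating function $h:\mathbb{R}^2\to\mathbb{R}$ satisfying $\partial_1\partial_2 h<0$ and $h(x+1,x'+1)=h(x,x')$, such that a sequence $(x_i)_{i\in\mathbb{Z}}$ corresponds to an orbit of the lift if and only if it satisfies the discrete Euler--Lagrange equation $\partial_2 h(x_{i-1},x_i)+\partial_1 h(x_i,x_{i+1})=0$, with the associated momenta given by $y_i=-\partial_1 h(x_i,x_{i+1})$. I would call a configuration $\mathbf{x}=(x_i)$ minimal if every finite segment minimizes the action $W_{m,n}(\mathbf{x})=\sum_{i=m}^{n-1}h(x_i,x_{i+1})$ among comparisons with identical endpoints.

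For rational $\omega=p/q$ with $\omega^-\le p/q\le\omega^+$, I would minimize $W(x_0,\ldots,x_q)$ over the affine space $\{x_q=x_0+p\}$. Coercivity of $h$ together with periodicity of the reduced problem yield a minimizer, whose critical-point equations say it is a $(p,q)$-periodic orbit of the lift. The key technical tool is Aubry's crossing lemma: any two minimal configurations can cross at most once, for otherwise a swap of tails after the two crossings would strictly decrease the action, contradicting minimality; this step uses the strict twist inequality $\partial_1\partial_2 h<0$. Consequently the set of minimizers of rotation number $p/q$ is totally ordered, and its image in the annulus forms a cyclically ordered, $f$-invariant subset of $\mathrm{Rot}(p/q)$.

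For irrational $\omega\in(\omega^-,\omega^+)$ I would approximate $\omega$ by rationals $p_n/q_n\to\omega$, select minimizing $(p_n,q_n)$-periodic configurations $\mathbf{x}^{(n)}$ normalized so that $0\le x^{(n)}_0<1$, and observe that cyclic ordering together with the normalization gives a uniform bound on $|x^{(n)}_i-i\omega|$. A diagonal extraction produces a pointwise limit configuration $(x^*_i)_{i\in\mathbb{Z}}$ which inherits cyclic ordering and rotation number $\omega$, and minimality passes to the limit because $h$ is $C^2$ and each finite action segment involves only finitely many terms. The orbit corresponding to $\mathbf{x}^*$ in the annulus, together with its closure, is a closed $f$-invariant monotone subset $M_\omega\subseteq\mathrm{Rot}(\omega)$. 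Applying Zorn's lemma within the closed $f$-invariant subsets of $M_\omega$ extracts a minimal one, which is the required Aubry--Mather set $\Sigma_\omega$. For $\omega=\omega^\pm$, the boundary circles themselves already provide closed, monotone, $f$-invariant sets in $\mathrm{Rot}(\omega^\pm)$ from which a minimal subset is extracted.

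The main obstacle is the crossing/monotonicity lemma: establishing that two distinct minimal configurations cannot cross twice requires the exchange-of-tails argument combined with the strict twist inequality, and both the order structure of periodic minimizers and the compactness needed for the irrational limit rest on it. Once this lemma is in hand, the rest of the proof is essentially bookkeeping.
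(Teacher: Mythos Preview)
The paper does not prove this theorem; it is quoted as a classical background result, with the single remark that ``Aubry-Mather sets defined as above can be obtained as limits of monotone Birkhoff periodic orbits'' and a reference to Katok. So there is no proof in the paper to compare against, only a pointer to the literature.

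Your proposal is the standard Aubry--Mather variational argument and is essentially correct. It is in fact compatible with the hint the paper gives: you build monotone $(p,q)$-periodic orbits (as action minimizers) and pass to a limit along $p_n/q_n\to\omega$, which is exactly the ``limits of monotone Birkhoff periodic orbits'' route. The one place where your write-up differs in spirit from the reference the paper cites is that Katok's argument is more topological, obtaining monotone periodic orbits without the full variational machinery; your use of the generating function and the crossing lemma is the Aubry--Mather route proper. Either way one lands on a closed monotone invariant set of the desired rotation number, and then extracts a minimal subset. One cosmetic point: invoking Zorn's lemma for the last step is heavier than necessary, since any nonempty compact invariant set for a continuous map on a metric space contains a minimal set by a routine nested-intersection argument; and you should note that monotonicity is inherited by subsets, so the minimal subset you extract remains monotone and hence is an Aubry--Mather set in the paper's sense.
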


Aubry-Mather sets defined as above can be obtained as limits of monotone Birkhoff periodic orbits \cite{Katok82}.  There may be many Aubry-Mather sets with the same rotation number \cite{Mather85}. On the other hand, if one requires Aubry-Mather sets to be action minizing, there exists a unique recurrent Aubry-Mather set for any given irrational rotation number.

In the sequel we will use the following result on the vertical ordering of Aubry-Mather sets from \cite{Gole01}.

\begin{thm}\label{thm:gole} There exists a family of essential circles $C_\omega$ in $A$ for $\omega\in[\omega^-,\omega^+]$ such that:
\begin{itemize}
\item [(i)] Each $C_\omega$ is  a graph over $y=0$;
\item [(ii)] The circles $C_\omega$ are mutually disjoint, and if $\omega'>\omega$ then $C_{\omega'}$ is above $C_{\omega}$;
\item[(iii)] Each $C_\omega$ contains an Aubry-Mather set $\Sigma_\omega$.
\end{itemize}
 \end{thm}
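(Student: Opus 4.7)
The plan is to deduce Theorem~\ref{thm:gole} from the Aubry-Mather Theorem (Theorem~\ref{thm:aubrymather}) combined with two classical ingredients of monotone twist map theory: the Birkhoff graph theorem for cyclically ordered invariant sets, and the vertical ordering of monotone sets with distinct rotation numbers. The circles $C_\omega$ will be obtained as Lipschitz graph extensions of the $\Sigma_\omega$, chosen compatibly so that they are disjoint and vertically ordered.

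First, for each $\omega\in[\omega^-,\omega^+]$ Theorem~\ref{thm:aubrymather} produces a non-empty Aubry-Mather set $\Sigma_\omega\subseteq\textrm{Rot}(\omega)$. Because $\Sigma_\omega$ is cyclically ordered and $f$ is a positive monotone twist, the projection $\pi_x|_{\Sigma_\omega}$ must be injective: two points of $\Sigma_\omega$ sharing an $x$-coordinate would have their $x$-order strictly reversed after one application of $f$, contradicting the invariance of the cyclic order. A uniform twist bound gives a uniform Lipschitz constant for the inverse $\psi^0_\omega:\pi_x(\Sigma_\omega)\to[0,1]$, whose graph contains $\Sigma_\omega$. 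By Kirszbraun's extension theorem, $\psi^0_\omega$ admits a Lipschitz extension $\bar\psi_\omega:\mathbb{T}^1\to[0,1]$ with the same constant.

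Second, I establish the strict vertical ordering: for $\omega<\omega'$, one has $\bar\psi_\omega(x)<\bar\psi_{\omega'}(x)$ on $\pi_x(\Sigma_\omega)\cap\pi_x(\Sigma_{\omega'})$. Equality at some $x_0$ would make $(x_0,\bar\psi_\omega(x_0))$ belong to both sets, producing a common $f$-orbit with two distinct rotation numbers, a contradiction. Ordering in the correct direction follows by contradiction: if $\bar\psi_\omega(x_0)>\bar\psi_{\omega'}(x_0)$ at some $x_0$ in the intersection, positive twist yields $\pi_x(f(x_0,\bar\psi_\omega(x_0)))>\pi_x(f(x_0,\bar\psi_{\omega'}(x_0)))$, and iteration (using that the graphs cannot cross without meeting) forces the rotation number of the upper point to dominate that of the lower one, giving $\omega\geq\omega'$.

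Third, I construct the family $C_\omega$. Fix a countable dense subset $\{\omega_n\}_{n\geq 1}\subseteq[\omega^-,\omega^+]$ and inductively build Lipschitz graph circles $C_{\omega_n}=\{(x,\psi_{\omega_n}(x))\}$ containing $\Sigma_{\omega_n}$, each lying strictly between the envelopes $\sup\{\psi_{\omega_m}:\omega_m<\omega_n,\,m<n\}$ and $\inf\{\psi_{\omega_m}:\omega_m>\omega_n,\,m<n\}$. At each step the strict ordering from Step 2 on $\pi_x(\Sigma_{\omega_n})$, together with Kirszbraun's freedom on complementary gap intervals, makes such a choice possible. For an arbitrary $\omega\in[\omega^-,\omega^+]$, set
\[
\psi^-_\omega(x)=\sup_{\omega_n<\omega}\psi_{\omega_n}(x),\qquad
\psi^+_\omega(x)=\inf_{\omega_n>\omega}\psi_{\omega_n}(x),
\]
and choose any Lipschitz $\psi_\omega$ satisfying $\psi^-_\omega\leq\psi_\omega\leq\psi^+_\omega$ and $\psi_\omega|_{\pi_x(\Sigma_\omega)}=\psi^0_\omega$. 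Then $C_\omega:=\{(x,\psi_\omega(x)):x\in\mathbb{T}^1\}$ is an essential Lipschitz circle containing $\Sigma_\omega$, and the vertical ordering is built in.

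The main obstacle is to guarantee \emph{strict} separation $\psi_\omega<\psi_{\omega'}$ for all $\omega<\omega'$, not merely on the defining Aubry-Mather sets; the potential failure mode is a coincidence $\psi^-_\omega(x_0)=\psi^+_\omega(x_0)$ on a gap interval of $\Sigma_\omega$. Ruling this out uses density of $\{\omega_n\}$: such a coincidence would produce sequences of Aubry-Mather orbits approaching a common point $(x_0,y_0)$ from rotation numbers above and below $\omega$; compactness of the cyclically ordered orbit space and uniform Lipschitz bounds force $(x_0,y_0)$ to generate an invariant cyclically ordered orbit of rotation number $\omega$, hence $(x_0,y_0)\in\Sigma_\omega$, contradicting the gap assumption. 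Making this compatible across the uncountable parameter family is the technical crux, absorbed into the inductive Kirszbraun construction above.
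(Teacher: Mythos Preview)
The paper does not prove this theorem; it is quoted from \cite{Gole01}, and the paper remarks that the circles $C_\omega$ arise as projections of \emph{ghost circles}, objects in $\mathbb{R}^{\mathbb{Z}}$ built via a monotone gradient flow of the action functional. That machinery is what produces the global vertical ordering of the entire circles, not merely of the Aubry-Mather sets they contain.

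Your approach is genuinely different: you attempt a direct construction via Lipschitz (Kirszbraun) extension over a countable dense set of rotation numbers, followed by an envelope construction for general $\omega$. There is, however, a real gap. In Step~3 you require a Lipschitz $\psi_\omega$ with $\psi^-_\omega\leq\psi_\omega\leq\psi^+_\omega$ \emph{and} $\psi_\omega|_{\pi_x(\Sigma_\omega)}=\psi^0_\omega$. For such a $\psi_\omega$ to exist you need $\psi_{\omega_n}(x)\leq\psi^0_\omega(x)$ for every $\omega_n<\omega$ and every $x\in\pi_x(\Sigma_\omega)$. Step~2 gives this only when $x\in\pi_x(\Sigma_{\omega_n})$ as well; if $x$ lies in a gap of $\Sigma_{\omega_n}$, then $\psi_{\omega_n}(x)$ is a Kirszbraun extension value, chosen subject only to constraints from the finitely many circles already built, with no reference to $\Sigma_\omega$. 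Nothing prevents it from overshooting $\psi^0_\omega(x)$. Your final paragraph addresses a different obstruction (coincidence of the two envelopes on a gap of $\Sigma_\omega$), and even that argument is incomplete: a limit of monotone orbits with rotation numbers converging to $\omega$ does yield a monotone orbit of rotation number $\omega$, but its closure need only contain \emph{some} Aubry-Mather set of rotation number $\omega$, not the particular $\Sigma_\omega$ you fixed at the outset (the paper itself notes there may be many Aubry-Mather sets with the same rotation number).

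This is precisely why Gol\'e's proof goes through ghost circles: the gradient flow on sequence space is order-preserving, so the resulting invariant circles inherit a global monotone dependence on $\omega$ automatically, with no need to control extensions on gaps by hand.
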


The above circles have Lipschitz regularity, and are projections of so called `ghost circles' that are objects in $\mathbb{R}^\mathbb{Z}$.  See \cite{Gole01} for details.  A similar result to Theorem \ref{thm:gole}  appears in \cite{KatznelsonOrnstein1997} who find Aubry-Mather sets lying on pseudo-graphs that are (not strictly) vertically ordered.

There are  some analogues of the Birkhoff Connecting Theorem for Aubry-Mather sets.
The following lemma is used in \cite{Kaloshin2003} to provide a topological proof for  Mather Connecting Theorem stated below.

\begin{lem}\label{lem:kaloshin}
Suppose that $T_1$ and $T_2$ bound a BZI.  Let $\Sigma_\omega$ be an Aubry-Mather set of rotation number $\omega$ inside the BZI. Let $p$ be a recurrent point in $\Sigma_\omega$ and $W(p)$ be a neighborhood of $p$ inside the BZI. The
following hold true: \begin{itemize}
\item [(i)] For some positive number $n^+$ (resp. $n^-$) depending on $W(p)$  the set\break $\bigcup _{j=0}^{n^+}f^j(W(p))$ (resp. $\bigcup _{j=0}^{n^-}f^{-j}(W(p))$) separates the cylinder.
\item[(ii)] The set $W^{+\infty}:=\bigcup _{j=0}^{\infty}f^j(W (p))$ (resp. the set $W^{-\infty}:=\bigcup _{j=0}^{\infty}f^{-j}(W(p) )$),  is connected and open.
\item [(iii)] The closure of $W^{+\infty}$
(resp. $W ^{-\infty}$) contains both boundary tori $T_1$ and $T_2$.
\item [(iv)] The set $W ^\infty:=\bigcup _{j=-\infty}^{\infty}f^j(W(p))$ is invariant, and both $W^{+\infty} $ and $W^{-\infty} $
are open and dense in $W ^\infty$.
\end{itemize}
\end{lem}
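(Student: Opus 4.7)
The plan is to handle (i)--(iv) using density of the orbit of $p$ in the minimal Aubry--Mather set $\Sigma_\omega$, Poincar\'e recurrence on the finite-area BZI, the essential Lipschitz circle $C_\omega$ from Theorem~\ref{thm:gole}, and, for (iii), the Birkhoff Connecting Theorem~\ref{thm:birkhoff}.

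For (i), the strategy is to show that $U_{n^+}:=\bigcup_{j=0}^{n^+}f^j(W(p))$ is a connected open set in the interior of the BZI that contains an essential loop homotopic to $T_1$, and hence separates $T_1$ from $T_2$. Connectedness comes from recurrence chaining: by Poincar\'e recurrence, the set of return times $\{m\ge 1:f^m(p)\in W(p)\}$ is infinite, so $W(p)\cap f^m(W(p))\ne\emptyset$, and chaining through recurrent intermediate iterates $f^j(p)$ (each recurrent since $p$ is) places every $f^j(W(p))$ in one connected component for $n^+$ large enough. Essentiality uses the rotation number $\omega\ne 0$ together with the $2$-dimensional structure of $W(p)$: the iterates of $p$ become dense in $\Sigma_\omega\subset C_\omega$, and the iterates of $W(p)$ include not only points of $\Sigma_\omega$ but also nearby orbits that, via the twist, fill in the complementary arcs of $C_\omega$; for $n^+$ sufficiently large depending on the radius of $W(p)$ this produces an essential loop inside $U_{n^+}$. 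The case of $n^-$ is symmetric by backward recurrence of $p$ on the finite-area BZI.

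For (ii), openness is immediate. Connectedness of $W^{+\infty}$ follows from the same recurrence chaining argument, now applied to all forward iterates. For (iv), $f$-invariance of $W^\infty$ is automatic by reindexing. For density of $W^{+\infty}$ in $W^\infty$, let $z\in W^\infty$ be Poincar\'e recurrent (which holds a.e., since $W^\infty$ is an $f$-invariant subset of the finite-area BZI), and pick $j$ with $z\in f^j(W(p))$; for any $\varepsilon>0$ there exists $n\ge\max(0,-j)$ with $d(f^n(z),z)<\varepsilon$, whence $f^n(z)\in f^{n+j}(W(p))\subset W^{+\infty}$, so $z\in\overline{W^{+\infty}}$. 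The exceptional set is relatively open of measure zero, hence empty.

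The main obstacle is (iii). By (i), $\overline{W^{+\infty}}$ separates the cylinder, so $A\setminus\overline{W^{+\infty}}$ has a component $A_1'$ containing $T_1$ and a component $A_2'$ containing $T_2$. Because $\overline{W^{+\infty}}$ is forward invariant, its complement is backward invariant, and by connectedness together with the invariance of $T_1$, the component $A_1'$ is itself backward invariant, so $f(A_1')\supset A_1'$. Area preservation forces $f(A_1')=A_1'$ up to a measure-zero set, so $A_1'$ is essentially $f$-invariant. If $A_1'$ were nonempty, its ``upper'' boundary $\gamma_1\subset\overline{W^{+\infty}}$ would be an essential invariant simple closed curve interior to the BZI; by Birkhoff's classical theorem on essential invariant curves of area-preserving twist maps, $\gamma_1$ would be a Lipschitz graph, hence an invariant primary torus, contradicting assumption~(A6). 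Therefore $T_1\subset\overline{W^{+\infty}}$; symmetric reasoning gives $T_2\subset\overline{W^{+\infty}}$, and the $W^{-\infty}$ statement is handled by applying the whole argument to $f^{-1}$.
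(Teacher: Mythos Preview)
The paper does not give its own proof of this lemma; it is quoted from \cite{Kaloshin2003}. So there is nothing to compare against, and your proposal must stand on its own. The overall strategy is reasonable, but several steps are not justified, and one step in (iii) is genuinely wrong as written.

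\textbf{Part (i).} Your argument that the forward iterates ``via the twist, fill in the complementary arcs of $C_\omega$'' is not a proof. Minimality and compactness do give $\Sigma_\omega\subset U_{n^+}$ for some $n^+$, but since $\Sigma_\omega$ may be a Cantor set, this alone does not force any path from $T_1$ to $T_2$ to meet $U_{n^+}$: such a path could thread through a gap. You need an honest argument that $U_{n^+}$ contains an essential closed curve (or otherwise separates). One clean route: work in the universal cover, use that consecutive return times of $p$ to $W(p)$ are bounded (syndetic, by minimality), and observe that the connected chain $\tilde W\cup \tilde f^{m_1}(\tilde W)\cup\cdots$ eventually meets a translate $\tilde W+(k,0)$ with $k\ge 1$, producing an essential loop after projection.

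\textbf{Part (iii).} This is where the real gap lies. You define $A_1'$ as ``the component of $A\setminus\overline{W^{+\infty}}$ containing $T_1$'' and then use invariance of $T_1$ to conclude $A_1'$ is backward invariant. But this requires $T_1\subset A_1'$, which in turn requires $T_1\cap\overline{W^{+\infty}}=\emptyset$. The statement you must contradict is only $T_1\not\subset\overline{W^{+\infty}}$, which allows part of $T_1$ to lie in $\overline{W^{+\infty}}$; in that case $T_1$ is split among several components and your invariance argument collapses. The fix is to take instead the component $A_1$ of the closed set $\bar{\mathcal Z}\setminus W^{+\infty}$ containing $T_1$: since $W^{+\infty}\subset\operatorname{int}(\mathcal Z)$ we always have $T_1\subset A_1$, and forward invariance of $W^{+\infty}$ gives $f^{-1}(A_1)\subset A_1$. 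If $T_1\not\subset\overline{W^{+\infty}}$ then $A_1$ has nonempty interior strictly between $T_1$ and $T_2$, and Birkhoff's theorem on the frontier of a semi-invariant annular region (which you should cite explicitly rather than rederive via ``area preservation forces $f(A_1')=A_1'$'', a step that is not correct as stated since the difference of two nested open sets of equal area need not be empty) produces an invariant essential Lipschitz graph interior to the BZI, contradicting (A6).

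\textbf{Part (iv).} The claim that the set of non-recurrent points is ``relatively open'' is false in general. The easy repair: Poincar\'e recurrence gives that recurrent points have full measure in each open set $f^j(W(p))$, hence are dense there; your argument then applies to each such point and shows $f^j(W(p))\subset\overline{W^{+\infty}}$.
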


The following result says that there exist orbits that visit any prescribed bi-infinite sequence of Aubry-Mather sets inside a BZI (see \cite{Mather91,Xia-preprint,Hall1989,KatznelsonOrnstein1997,Kaloshin2003}).

\begin{thm}[Mather Connecting Theorem]\label{thm:mathershadowing}
Suppose that $T_1$ and $T_2$ bound a BZI, and $\{\Sigma_{\omega_i}\}_{i\in\mathbb{Z}}$  is a bi-infinite sequence of Aubry-Mather sets inside the BZI.  Let  $\eps_i>0$ for $i\in\Z$. Then there exist  a point $z$ inside the BZI and an increasing bi-infinite sequence of integers $\{j_i\}_{i\in\Z}$ such that $f^{j_i}(z)$ is within $\eps_i$ from $\Sigma_{\omega_i}$ for all $i\in \Z$.
\end{thm}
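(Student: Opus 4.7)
The plan is to reduce the bi-infinite shadowing to a pairwise connecting statement via Lemma~\ref{lem:kaloshin}, and then to bootstrap to the full bi-infinite orbit by a nested intersection and compactness argument. For the pairwise connecting, given two Aubry-Mather sets $\Sigma_{\omega_i}, \Sigma_{\omega_{i+1}}$ inside the BZI and sufficiently small disjoint open neighborhoods $W_i, W_{i+1}$ of recurrent points $p_i \in \Sigma_{\omega_i}$, $p_{i+1} \in \Sigma_{\omega_{i+1}}$ (every point of a minimal set is recurrent), I would show there exist $u \in W_i$ and $n > 0$ with $f^n(u) \in W_{i+1}$. Lemma~\ref{lem:kaloshin} applied to $p_i$ and to $p_{i+1}$ gives that $W_i^{+\infty}$ and $W_{i+1}^{-\infty}$ are both open, connected, separate the annulus, and have closures containing both boundary tori $T_1, T_2$. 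The key topological fact is: any connected $V \subseteq A$ with $\overline{V} \supseteq T_1 \cup T_2$ meets every open set $U$ separating $A$, since otherwise $V$, being connected and disjoint from $U$, would lie inside a single component of $A \setminus U$ whose closure cannot reach both $T_1$ and $T_2$. Applied with $U = W_i^{+\infty}$, $V = W_{i+1}^{-\infty}$, a point $x = f^a(u) = f^{-b}(v)$ in the intersection gives $u \in W_i$ with $f^{a+b}(u) = v \in W_{i+1}$ and $a + b > 0$ (using $W_i \cap W_{i+1} = \emptyset$).

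Next I would construct the bi-infinite shadowing orbit. For each $i$, fix a recurrent $p_i \in \Sigma_{\omega_i}$ and a small open ball $W_i$ around $p_i$ inside the $(\eps_i/2)$-neighborhood of $\Sigma_{\omega_i}$, chosen pairwise disjoint. Inductively build non-empty open sets $V_N \subseteq V_{N-1} \subseteq \cdots \subseteq V_0 \subseteq W_0$ and increasing integers $j_{-N}^{(N)} < \cdots < j_N^{(N)}$ with $f^{j_i^{(N)}}(V_N) \subseteq W_i$ for $|i| \leq N$ and the stability property $j_i^{(N+1)} = j_i^{(N)}$ for $|i| \leq N$. To extend forward from $N$ to $N+1$, note that $f^{j_N^{(N)}}(V_N) \subseteq W_N$ is a non-empty open set; provided the induction is arranged so that this iterate still meets $\Sigma_{\omega_N}$, Lemma~\ref{lem:kaloshin} applies to a neighborhood of a recurrent point of $\Sigma_{\omega_N}$ inside it, and combining this with the pairwise connecting to $W_{N+1}$ produces $u' \in f^{j_N^{(N)}}(V_N)$ with $f^m(u') \in W_{N+1}$ for some $m > 0$; pulling back by $f^{-j_N^{(N)}}$ and restricting to a small ball in $V_N$ yields $V_{N+1}$. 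The backward extension is symmetric. By compactness, $\bigcap_N \overline{V_N}$ is non-empty, and any $z$ in it satisfies $f^{j_i}(z) \in \overline{W_i} \subseteq B(\Sigma_{\omega_i}, \eps_i)$ for all $i \in \mathbb{Z}$, where $j_i = j_i^{(|i|)}$ is well-defined by stability.

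The main obstacle is ensuring, at each refinement, that the iterate of the shrunken open set still meets the relevant Aubry-Mather set, so that the recurrence hypothesis of Lemma~\ref{lem:kaloshin} remains usable for the next extension. This is handled by choosing each refinement as a small ball centered at a carefully selected recurrent point of the appropriate Aubry-Mather set, which is possible because minimality makes recurrent points dense in each $\Sigma_{\omega_i}$. The remaining bookkeeping — the symmetric treatment of backward extensions and verification of time stability under further refinements — is routine.
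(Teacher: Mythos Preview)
The paper does not give its own proof of Theorem~\ref{thm:mathershadowing}; it is stated as a background result with references to Mather, Xia, Hall, Katznelson--Ornstein, and Kaloshin. The paper does remark that Lemma~\ref{lem:kaloshin} is the key ingredient in Kaloshin's topological proof, so your overall strategy --- pairwise connecting via Lemma~\ref{lem:kaloshin} followed by a nested-intersection compactness argument --- is exactly the route the paper points to, even though the paper itself does not carry it out.

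Your pairwise connecting step is fine: $W_i^{+\infty}$ separates the cylinder and $W_{i+1}^{-\infty}$ is connected with closure containing both boundary tori, so they meet, and forward-invariance of $W_i^{+\infty}$ then yields a forward orbit from $W_i$ into $W_{i+1}$.

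The inductive step, however, has a genuine gap. To pass from stage $N$ to stage $N+1$ you need to apply Lemma~\ref{lem:kaloshin} not to $W_N$ but to the much smaller open set $f^{j_N^{(N)}}(V_N)\subseteq W_N$, and Lemma~\ref{lem:kaloshin} requires its input to be a neighborhood of a \emph{recurrent point of an Aubry--Mather set}. Your pairwise connecting only produces a point of $f^{j_{N-1}}(V_{N-1})$ whose forward iterate lands somewhere in $W_N$; it does not guarantee that this iterate lands on, or even near, $\Sigma_{\omega_N}$. Consequently the refined set $f^{j_N^{(N)}}(V_N)$ need not contain any point of $\Sigma_{\omega_N}$, and Lemma~\ref{lem:kaloshin} no longer applies to it. Your proposed fix --- ``choosing each refinement as a small ball centered at a carefully selected recurrent point'' --- is circular: to center the refinement at a point of $\Sigma_{\omega_N}$ you would already need to know that the connecting orbit can be chosen to land on $\Sigma_{\omega_N}$, which is precisely the missing ingredient. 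Minimality of $\Sigma_{\omega_N}$ tells you recurrent points are dense \emph{in $\Sigma_{\omega_N}$}, not that $\Sigma_{\omega_N}$ is dense in $W_N$.

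One way to close the gap, in the spirit of Kaloshin's argument, is to exploit part~(iv) of Lemma~\ref{lem:kaloshin} more carefully: the sets $W_i^{\pm\infty}$ are open and dense in the invariant open set $W_i^{\infty}$, and one shows that an open subset of $W_N$ that lies in $W_N^{\infty}$ must also meet $W_{N+1}^{\infty}$ (using that both invariant open sets separate the cylinder and accumulate on both boundaries), after which the density of $W_{N+1}^{-\infty}$ in $W_{N+1}^{\infty}$ pushes a forward iterate into $W_{N+1}$. This avoids ever needing the refined set to contain a point of the next Aubry--Mather set. You have the right architecture; the missing piece is this extra topological step linking the invariant saturations $W_i^{\infty}$ to one another.
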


The Aubry-Mather sets in Theorem \ref{thm:mathershadowing}  are action minimizing. The following topological version of Mather Connecting Theorem, due to Hall \cite{Hall1989}, provides shadowing orbits of Aubry-Mather sets that are not necessarily action minimizing. This approach can be implemented in rigorous computer experiments \cite{Jungreis91}.

\begin{thm} \label{thm:diagonal} Suppose that $T_1$ and $T_2$ bound a BZI, and  $\{z_s\}_{s\in\mathbb{Z}}$ is  a bi-infinite sequence of monotone $(p_s/q_s)$-periodic points, with the rotation numbers $p_s/q_s$ mutually distinct, inside the BZI. Given a bi-infinite sequence $\{n_s\}_{s\in\mathbb{Z}}$  of positive integers, then there exist a point $z$ and a  bi-infinite sequence $\{m_s\}_{s\in\mathbb{Z}}$ of positive integers such that, for each $s\geq 0$, there exist some points $w_s,\bar w_s$  in the extended orbit of $z_s$ such that
\begin{equation}\label{eqn:hall1}\begin{split}\pi_x(f^j(w_s))<\pi_x(f^j(z))<\pi_x(f^j(\bar w_s))\textrm { for }\\\sum_{t=0}^{s-1} n_t+\sum_{t=0}^{s-1}m_t\leq j\leq \sum_{t=0}^{s} n_t+\sum_{t=0}^{s-1}m_t.\end{split}\end{equation}

A similar statement holds for each $s<0$.
\end{thm}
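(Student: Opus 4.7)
The plan is to adapt Hall's topological construction to prove this diagonal trapping statement. Work in the universal cover $A = \R \times [0,1]$ of the BZI. For each $s\in\Z$, the extended orbit $EO(z_s)$ is cyclically ordered in the $\pi_x$-coordinate; since $z_s$ is $(p_s/q_s)$-periodic, $EO(z_s)$ has $q_s$ points per unit of $\pi_x$. Choose $w_s, \bar w_s \in EO(z_s)$ that are cyclically adjacent (no other point of $EO(z_s)$ has $\pi_x$-coordinate strictly between them). Because $f$ preserves the cyclic order of every monotone orbit, the pair $\bigl(f^j(w_s), f^j(\bar w_s)\bigr)$ remains cyclically adjacent for every $j\in\Z$. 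For the interval $I_s := \bigl[\sum_{t=0}^{s-1}(n_t + m_t),\ \sum_{t=0}^{s-1}(n_t + m_t) + n_s\bigr]$, define the sandwich window
\[
R_s := \bigl\{z\in A : \pi_x(f^j(w_s)) < \pi_x(f^j(z)) < \pi_x(f^j(\bar w_s)) \text{ for all } j\in I_s\bigr\}.
\]
Area preservation combined with the monotone twist condition should show that $R_s$ is open and contains a non-empty open product-type sub-region framed by (portions of) the two orbit strands $\{f^j(w_s)\}_{j\in I_s}$ and $\{f^j(\bar w_s)\}_{j\in I_s}$.

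Next, I would connect consecutive windows via Lemma \ref{lem:kaloshin} applied to the recurrent periodic points $w_s$ and $w_{s+1}$. For any small open neighborhood $W(w_s)\subseteq$ BZI, $\bigcup_{k\geq 0}f^k(W(w_s))$ is open, connected, and dense in an invariant set $W^{+\infty}$ whose closure contains both $T_1$ and $T_2$; the analogous statement holds for $W^{-\infty}$. Starting from a non-empty open subset $V_{s+1}\subseteq R_{s+1}$ and a small open window near $w_s$ inside $R_s$, some forward iterate of the latter must meet $V_{s+1}$. This yields a positive integer $m_s$ (the transition time entering the definition of $I_{s+1}$) and a non-empty open sub-window $R_s' \subseteq R_s \cap R_{s+1}$ whose points satisfy the sandwich condition both on $I_s$ and on $I_{s+1}$.

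Iterating this connector construction forward ($s\to+\infty$) and backward ($s\to-\infty$), using $W^{+\infty}$ and $W^{-\infty}$ respectively, I obtain a nested decreasing family of non-empty open sets $O_N \subseteq R_0$ consisting of points whose orbit satisfies \eqref{eqn:hall1} for all $|s|\leq N$. By compactness of the BZI and the finite intersection property applied to the closures, $\bigcap_N \overline{O_N}$ is non-empty, and any $z$ in this intersection proves the theorem together with the sequence $\{m_s\}$ assembled along the way.

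\textbf{Main obstacle.} The principal difficulty is controlling the shape of the sub-windows through successive connector steps so that the induction does not collapse: if a sub-window degenerates to a thin curve or a point, the subsequent intersection with $R_{s+1}$ may become empty. I would address this by maintaining correctly aligned product-type windows (in the sense of the paper's topological machinery) whose propagation under the monotone twist preserves a non-trivial vertical extent, and by invoking part (iv) of Lemma \ref{lem:kaloshin} to place each connector inside the open dense set $W^{+\infty}\cap W^{-\infty}$, ensuring that sub-windows remain open and of product form at every stage. The bookkeeping required to impose the sandwich condition and the connector condition simultaneously, while preserving enough "width" in the window to continue the induction, is the technical heart of the proof.
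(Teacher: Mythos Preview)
Your proposal has a genuine gap in the connector step. You claim that by Lemma~\ref{lem:kaloshin}, some forward iterate of a small open window near $w_s$ inside $R_s$ must meet a prescribed open subset $V_{s+1}\subseteq R_{s+1}$. But Lemma~\ref{lem:kaloshin} only asserts that the closure of $W^{+\infty}=\bigcup_{k\geq 0}f^k(W(w_s))$ contains the \emph{boundary tori} $T_1,T_2$, and that $W^{+\infty}$ is dense in the invariant set $W^\infty$. It does \emph{not} say $W^{+\infty}$ is dense in the BZI, and in general $W^\infty$ need not contain the specific neighborhood $V_{s+1}$ of the unrelated periodic point $w_{s+1}$. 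So the intersection you need is not established, and the induction cannot proceed as written.

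The paper's (i.e.\ Hall's) argument circumvents this by working not with arbitrary open sub-windows but with \emph{positive and negative diagonals} of vertical strips $B_{w,\bar w}$. The key structural facts are: (i) a positive diagonal in $B_{w,\bar w}$ has, under $f$, a component that is again a positive diagonal in $B_{f(w),f(\bar w)}$ (the hereditary property); and (ii) for the transition from $EO(z_s)$ to $EO(z_{s+1})$, one uses Birkhoff connecting orbits near both boundary tori to show that a sufficiently high iterate of the diagonal stretches across an \emph{entire fundamental interval} of the annulus. Once it stretches across a fundamental interval, it automatically contains a positive diagonal in $B_{w_{s+1},\bar w_{s+1}}$ for \emph{any} adjacent pair $w_{s+1},\bar w_{s+1}\in EO(z_{s+1})$---no density or hitting argument for a specific small target is needed. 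The forward construction yields nested negative diagonals of $B_{w_0,\bar w_0}$, the backward construction yields nested positive diagonals, and a positive diagonal always meets a negative diagonal in the same strip; this replaces your compactness/closure step and avoids the strict-vs-nonstrict inequality issue on the boundary. Your ``main obstacle'' paragraph correctly senses that shape control is the crux, but the remedy is precisely this diagonal machinery, not a generic correctly-aligned-windows argument.
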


In the above, $n_s$ represents  the number of iterates for which the orbit of $z$ shadows -- in the sense of the cyclical ordering --  the extended orbit  of $z_s$, and $m_s$ represents the number of iterates it takes the orbit of $z$ to pass from the extended orbit of $z_s$ to the extended orbit of $z_{s+1}$.

They main tool used in Hall's arguments is that of a positive (negative) diagonal. Denote by $\mathcal{Z}$ the BZI bounded by the tori $T_1$ and $T_2$. Let
\begin{eqnarray}
I_z&=&\{w\in\mathcal{Z}\,|\, \pi_x(w)=\pi_x(z)\},\\
I^+_z&=&\{w\in I_z\,|\, \pi_y(w)\geq \pi_y(z)\},\\
I^-_z&=&\{w\in I_z\,|\, \pi_y(w)\leq \pi_y(z)\},\\
B_{z_0,z_1}&=&\{w\in\mathcal{Z}\,|\, \pi_x(z_0)< \pi_x(w)< \pi_x(z_1)\},
\end{eqnarray}
where $z,z_0,z_1$ are points in the annulus.

A positive diagonal $D$ in $B_{z_0,z_1}$ is a set $D\subseteq \textrm{cl}(B_{z_0,z_1})$ such that
\begin{itemize}
\item [(i)] $D$ is simply connected and the closure of its interior;
\item [(ii)] $\partial D \cap \textrm{cl}(B_{z_0,z_1})\subseteq I^-_{z_0}\cup I^+_{z_1} \cup T_1\cup T_2$;
\item [(iii)] $\partial D\cap I^-_{z_0}\neq \emptyset$ and $\partial  D\cap I^+_{z_1}\neq \emptyset$.
\end{itemize}

The  set $\partial D\cap B_{z_0,z_1}$ has exactly two components connecting $I^-_{z_0}\cup T_1$ to $I^+_{z_1}\cup T_2$, which are called the upper and lower edges of $D$, respectively. We informally say that these components `stretch across' $B_{z_0,z_1}$. See Fig.~\ref{fig:halldiag}.

A negative diagonal and its upper and lower edges are defined similarly.

\begin{figure} \centering
\includegraphics*[width=0.75\textwidth, clip, keepaspectratio]
{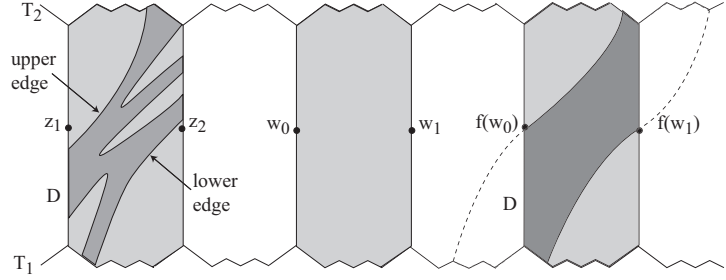}
    \caption[]{Two positive diagonal sets. The positive diagonal set on the left has its upper and lower edges marked. The positive diagonal set on the right is obtained by intersecting $f(B_{w_0,w_1})$ with $B_{f(w_0),f(w_1)}$. The upper edge of this diagonal set is contained in $f(I^+_{w_0})$ and the lower edge is contained in $f(I^-_{w_1})$.}
    \label{fig:halldiag}
\end{figure}

An important feature of positive diagonals is the following hereditary property.
Given $z_0,z_0$  such that $\pi_x(z_0)<\pi_x(z_1)$ and $\pi_x(f(z_0))<\pi_x(f(z_1))$, if $D$ is a positive diagonal in $B_{z_0,z_1}$, then $f(D)\cap B_{f(z_0),f(z_1)}$ has a component $D'$ that is a positive diagonal in $B_{f(z_0),f(z_1)}$.

One way to generate a positive diagonal set is by taking a component of the intersection between $f^k(B_{w_0,w_1})$ and $B_{f^k(w_0),f^k(w_1)}$. In this case, there exists a positive diagonal in $B_{f^k(w_0),f^k(w_1)}$ whose upper edge is contained on $f^k(I_{w_0}^+)$ and lower edge is contained in $f^k(I_{w_1}^-)$.
More general, one has the following important property. If $D$ has the upper edge contained in $f^k(I^+_{w_0})$ and the lower edge contained in $f^k(I^-_{w_1})$, and
$\partial D\cap B_{z_0,z_1}\subseteq f^k(I^+_{w_0}\cup I^-_{w_1})$, for some $w_0,w_1$ with  $\pi_x(w_0)<\pi_x(w_1)$ and some $k>0$,  then
then $f(D)\cap B_{f(z_0),f(z_1)}$ has a component $D'$  which can be chosen so that its upper edge is contained in $f^{k+1}(I^+_{w_0})$ and its lower edge is contained in $f^{k+1}(I^-_{w_1})$. See Fig.~\ref{fig:halldiag}. A similar property holds for negative diagonals.

The proof of Theorem \ref{thm:diagonal}  in \cite{Hall1989} is an inductive argument which, for a given pair of adjacent points $w_0,\bar w_0$ in the extended orbit of $z_0$,
and for each $\sigma\geq 0$, produces a nested sequence $D_0\supseteq D_1\supseteq \ldots \supseteq D_{\sigma}$   of negative diagonals of $B_{w_0,\bar w_0}$ such that, for each $s\in\{0,\ldots,\sigma\}$, the following hold: (a) the orbit of each point $z \in D_s$  satisfies the ordering relation \eqref{eqn:hall1},  and (b) there is a sufficiently large $j_s>0$ such that $f^{j_s+j}(D_s)$ contains a component that is  a positive diagonal in $B_{f^j(w_s),f^j(\bar w_s)}$, for some adjacent points $w_s,\bar w_s\in EO(z_s)$,  and for all $j=1,\ldots, n_s$. In the above, $j_s=\sum_{t=0}^{s-1} n_t+\sum_{t=0}^{s-1}m_t$.
Moreover, in this  inductive argument one can choose the diagonal sets $D_s$ so that $f^{j_s+j}(D_s)$ has the upper edge contained in $f^{j_s+j}(I^+_{w_0})$, lower edge contained in $f^{j_s+j}(I^-_{\bar w_0})$, and $\partial f^{j_s+j}(D_s)\cap B_{f^{j_s+j}(w_0),f^{j_s+j}(\bar w_0)} \subseteq f^{j_s+j} (I^+_{w_0}\cup I^-_{\bar w_0})$.

For the basis step, starting with $w_0,\bar w_0$ and applying the hereditary property from above $n_0$ times, one obtains  a negative diagonal set $D_0$ of $B_{w_0,\bar w_0}$ with the properties that  each point $z \in D_0$  satisfies the ordering relation \eqref{eqn:hall1} for $s=0$, and $f^{n_0}(D_0)$  has a component that is a positive diagonal of $B_{f^{n_0}(w_0),f^{n_0}(\bar w_0)}$.

For the inductive step, one assumes a negative diagonal $D_\sigma$  of $B_{w_0,\bar w_0}$ as above, and wants to produce a negative diagonal $D_{\sigma+1}\subseteq D_\sigma$  of $B_{w_0,\bar w_0}$  which fulfils the corresponding properties. The key idea  is  to use the existence of points near $y=0$ that get near $y=1$, and of points near $y=1$ that get near $y=0$, as provided by Theorem \ref{thm:birkhoff}, in order to show that  for some $j_\sigma$ sufficiently large
$f^{j_\sigma}(D_\sigma)$ contains a component that stretches all the way across a fundamental interval of the annulus. Hence  $f^{j_\sigma}(D_\sigma)$ contains a subset that is a positive diagonal of $B_{w_{\sigma+1}, \bar w_{\sigma+1}}$ for two adjacent points $w_{\sigma+1}, \bar w_{\sigma+1}\in EO(z_{\sigma+1})$. From this it follows that $f^{j_\sigma+j}(D_\sigma)$ contains a component that is  a positive diagonal in $B_{f^j(w_{\sigma+1}),f^j(\bar w_{\sigma+1})}$ for all $j=1,\ldots, n_{\sigma+1}$. This completes the inductive step.

Applying a similar argument for the negative iterates of $f$ produces a nested sequence of positive diagonals of $B_{w_0,\bar w_0}$. A positive diagonal of  $B_{w_0,\bar w_0}$ always has a non-empty intersection  with a negative diagonal  of $B_{w_0,\bar w_0}$. This implies the existence of points $z$ whose forward  orbits satisfy the ordering conditions in \eqref{eqn:hall1} and whose backwards orbits satisfy similar ordering conditions.

Using limit arguments as in \cite{Katok82}, one can obtain shadowing of Aubry-Mather sets of irrational rotation numbers as well. These topological ideas will be used in the proof of Theorem \ref{thm:extensionmathershadowing} below.

\begin{rem}\label{rem:lemma}
An immediate consequence of Lemma \ref{lem:kaloshin} is that, given a neighborhood $W$ of a point $p\in\Sigma_\omega$, where $\Sigma_\omega$ is an Aubry-Mather set inside a BZI bounded by $T_1$ and T$_2$, and given a neighborhood $U$  of $T_1$ or  $T_2$, there exists an arbitrarily large $j>0$ such that $f^j(W(p))\cap U\neq \emptyset$. Also, there exists an arbitrarily large $j'>0$ such that $f^{-j'}(W(p))\cap U\neq \emptyset$.
\end{rem}
\begin{rem}\label{rem:tilt} The results in this section hold  if we replace conditions (i) and (ii) from the definition of an area preserving, monotone twist map with the following weaker conditions:
\begin{itemize}
\item [(i')] $f$ satisfies the following  `condition B': for every pair of neighborhoods $U_1$ of $T_1$ and $U_2$ of $T_2$, there exist $z_1,z_2\in A$ and $n_1,n_2>0$ such that $z_1\in U_1$ and $f^{n_1}(z_1)\in U_2$, and $z_2\in U_2$ and $f^{n_2}(z_2)\in U_1$. Note that this condition only makes sense if we restrict the dynamics to a BZI.
    \item [(ii')] $f$ satisfies  the following positive tilt condition: if we denote by $\theta_{z}$ the angle deviation from  the vertical, measured from the vertical vector $(0,1)$ to $Df_{z}(0,1)$, with the clockwise direction taken as the positive direction,  and defined in such a way that $\theta_{(x,0)}\in [-\pi/2,\pi/2]$ and $\theta$ is continuous, then $\theta_{z}> 0$ at all points. See Fig. \ref{fig:tiltmap}.
\end{itemize}
Compositions of positive twist maps, are for example, positive tilt maps. We shall note that the Aubry-Mather  theory applies to positive tilt maps as well (see \cite{Hu98}).\end{rem}

\begin{figure} \centering
\includegraphics*[width=0.75\textwidth, clip, keepaspectratio]
{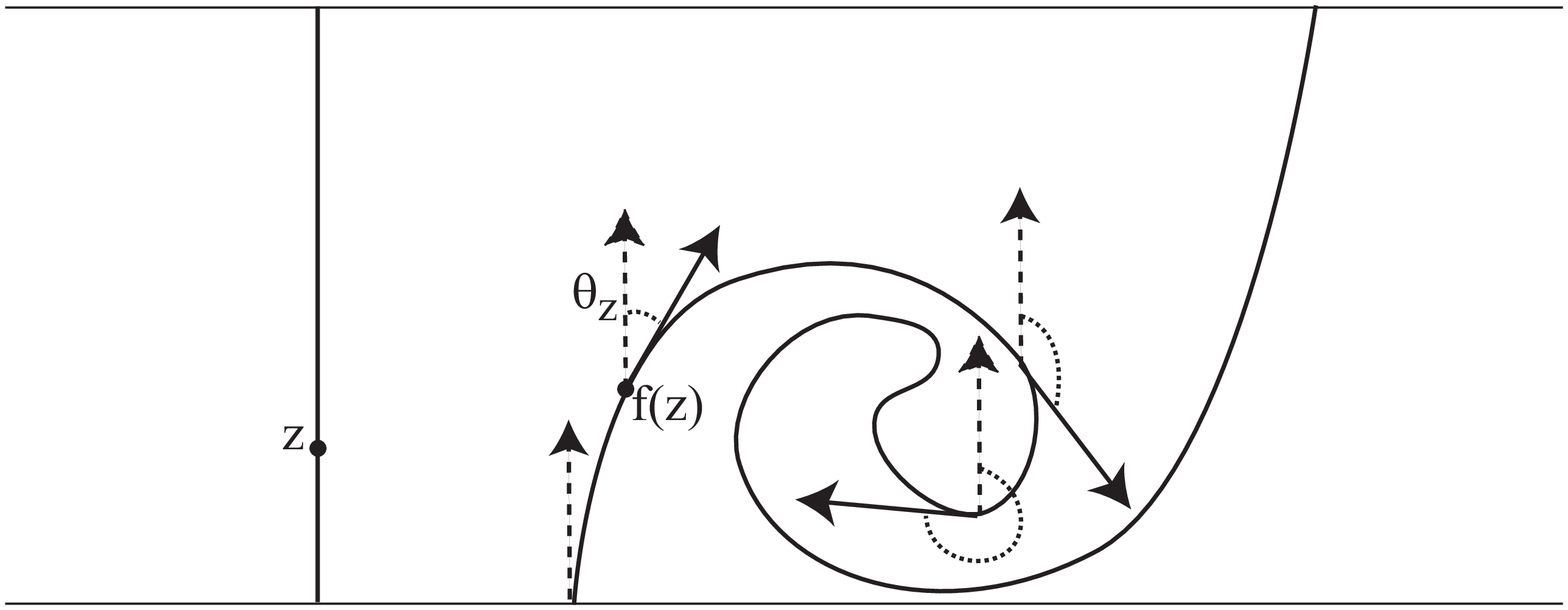}
    \caption[]{Positively tilted map.}
    \label{fig:tiltmap}
\end{figure}

\begin{rem} Aubry-Mather theory  and the above shadowing result also hold for generalized twist maps of the higher dimensional annulus $S^1\times \mathbb{R}^n$. See \cite{Angenent90}.
\end{rem}
\begin{rem}
The topological approach in this section does not yield trajectories that get close to each set in the prescribed collection of Aubry-Mather sets, as in Theorem \ref{thm:mathershadowing}. It seems  possible, however, that these topological methods can be combined with the variational methods in \cite{Mather91} to obtain trajectories that go very close to each  Aubry-Mather set in the given collection.
\end{rem}

\section{Background on the scattering map and on the topological method of correctly aligned windows}
\subsection{Scattering map}\label{section:scattering}

The scattering map acts on the
normally hyperbolic invariant manifold $\Lambda$ and relates the past asymptotic trajectory
of each orbit in the homoclinic manifold
to its future asymptotic behavior. We review its properties following \cite{DelshamsLS08a}.

For the general case, we consider a $C^r$-differentiable manifold $M$, a $C^r$-differentiable map $f:M\to M$, and an $l$-dimensional normally hyperbolic invariant manifold $\Lambda$ for $f$. By the definition of normal hyperbolicity, there exists a splitting of the tangent bundle
of $TM$ into sub-bundles
\[TM=E^u\oplus E^s\oplus T\Lambda,\]
that are invariant under $df$, and there exist a constant $C>0$ and
rates $0<\lambda<\mu^{-1}<1$, such that for all $x\in\Lambda$ we
have
\[\begin{split}
v\in E^s_x  \Leftrightarrow \|Df^k_x(v)\|\leq C\lambda^k\|v\|  \textrm{ for all } k\geq 0,\\
v\in E^u_x  \Leftrightarrow \|Df^k_x(v)\|\leq C\lambda^{-k}\|v\|  \textrm{ for all } k\leq 0,\\
v\in T_x\Lambda \Leftrightarrow \|Df^k_x(v)\|\leq C\mu^{|k|}\|v\|
\textrm{ for all } k\in\mathbb{Z}.
\end{split}\]

The smoothness of the invariant objects defined by the normally hyperbolic structure
depends on the rates $\lambda$ and $\mu$.
The map $f$ is said to be {\em $\ell$-normally hyperbolic along $\Lambda$} provided that
$1 \leq \ell \leq r$ is an integer satisfying $\lambda\,\mu^\ell <1$, i.e.,
$\ell < (\log \lambda^{-1})(\log \mu)^{-1}$.  Then the stable and unstable manifolds
$W^s(\Lambda)$ and $W^u(\Lambda)$ and the normally hyperbolic manifold $\Lambda$ are all
$C^\ell$-differentiable.
The splitting $E^s_z=T_z(W^s(x))$  depends $C^{\ell-1}$ smoothly on
$z$ in $W^s(\Lambda)$ so $\{\,W^s(x)\, |\, x \in \Lambda\,\}$\, is a $C^{\ell-1}$ foliation of
$W^s(\Lambda)$.  (This is stated explicitly in \cite{Robinson1971} and follows from the
$C^r$ Section Theorem of \cite{HirschPS77}.)
Similarly, $\{\,W^u(x)\, |\, x \in \Lambda\,\}$\, is a $C^{\ell-1}$ foliation of
$W^u(\Lambda)$.

Since the stable (resp. unstable) manifolds of $\Lambda$ are foliated by stable
(resp. unstable) manifolds of points, we have that for each $x\in W^s(\Lambda)$
(resp. $x\in W^u(\Lambda)$), there exists a unique $x^+ \in \Lambda$
(resp. $x^- \in \Lambda$) such that $x \in W^s(x^+)$
(resp. $x \in W^u(x^-)$).
We define the maps $\Omega^+ : W^s(\Lambda) \to \Lambda$ by $\Omega^+(x) = x^+$ and
$\Omega^- : W^u(\Lambda) \to \Lambda$ by $\Omega^-(x) = x^-$.
The maps $\Omega^+$ and $\Omega^-$ are $C^{\ell -1}$-smooth since the foliations are smooth.

We now describe the scattering  map.  Assume that $W^u(\Lambda)$ and
$W^s(\Lambda)$ have a differentiably transverse intersection along a
homoclinic $l$-dimensional $C^{\ell-1}$-differentiable manifold $\Gamma$.
This means that $\Gamma\subseteq W^u(\Lambda) \cap W^s(\Lambda)$
and, for each $x\in\Gamma$, we have
\begin{equation}\label{eqn:channel1} \begin{split}
T_xM=T_xW^u(\Lambda)+T_xW^s(\Lambda),\\
T_x\Gamma=T_xW^u(\Lambda)\cap T_xW^s(\Lambda).
\end{split} \end{equation}
We assume the additional condition that for each $x\in\Gamma$ we
have
\begin{equation}\label{eqn:channel2}\begin{split}
T_xW^s(\Lambda)=T_xW^s(x^+)\oplus T_x(\Gamma),\\
T_xW^u(\Lambda)=T_xW^u(x^-)\oplus T_x(\Gamma),
\end{split} \end{equation}
where $x^-,x^+$ are the uniquely defined points in $\Lambda$
corresponding to $x$.

The restrictions $\Omega^+_\Gamma,\Omega^-_\Gamma$ of
$\Omega^+,\Omega^-$ to $\Gamma$ are  local
$C^{\ell-1}$-diffeomorphisms.  By replacing $\Gamma$ to a
submanifold of it  (which, with an abuse of notation, we still denote $\Gamma$) we can ensure that
$\Omega^+_\Gamma:\Gamma\to U^+,\Omega^-_\Gamma:\Gamma\to U^-$ are $C^{\ell-1}$-diffeomorphisms from $\Gamma$ to the open sets $U^+,U^-
$ in $\Lambda$, respectively.

\begin{defn}A homoclinic manifold $\Gamma$ satisfying \eqref{eqn:channel1} and \eqref{eqn:channel2}, and for which the corresponding
restrictions of the wave maps are $C^{\ell-1}$-diffeomorphisms,  is
 referred as a homoclinic channel.\end{defn}  \begin{defn} Given a
homoclinic channel $\Gamma$, the scattering map associated to
$\Gamma$ is the $C^{\ell-1}$-diffeomorphism
$S_\Gamma=\Omega_\Gamma^+\circ (\Omega_\Gamma^-)^{-1}$ from
the open subset $U^-:=\Omega_\Gamma^-(\Gamma)$ in $\Lambda$ to the
open subset $U^+:=\Omega_\Gamma^+(\Gamma)$ in $\Lambda$.
\end{defn}

In the sequel we will regard $S_\Gamma$ as a partially defined map, so the
image of a set $A$ by $S_\Gamma$ means the set $S_\Gamma(A\cap U^-)$.

In this paper, we need the following property of the scattering map.

\begin{prop}\label{prop:trans} Assume that $T_1$ and $T_2$ are two invariant submanifolds of complementary dimensions in $\Lambda$. Then $W^u(T_1)$ has a topologically transverse intersection with $W^s(T_2)$ inside $\Gamma$  if and only if $S_\Gamma(T_1\cap U^-)$ has a topologically transverse intersection with $T_2\cap U^+$ in $\Lambda$.
\end{prop}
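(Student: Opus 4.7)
The plan is to transport the question from the homoclinic channel $\Gamma$ to the normally hyperbolic manifold $\Lambda$ via the wave maps, using that their restrictions $\Omega^\pm_\Gamma : \Gamma \to U^\pm$ are $C^{\ell-1}$-diffeomorphisms (with $\ell-1 \geq 1$). First I would identify the relevant submanifolds inside $\Gamma$. Because $W^u(\Lambda)$ is foliated by the unstable leaves $W^u(x)$, which are exactly the fibers of $\Omega^-$, we have $W^u(T_1) = (\Omega^-)^{-1}(T_1)$, and so
\[W^u(T_1) \cap \Gamma = (\Omega^-_\Gamma)^{-1}(T_1 \cap U^-),\]
and analogously
\[W^s(T_2) \cap \Gamma = (\Omega^+_\Gamma)^{-1}(T_2 \cap U^+).\]
Since $\Omega^\pm_\Gamma$ are $C^1$-diffeomorphisms, these are $C^{\ell-1}$-submanifolds of $\Gamma$ of dimensions $\dim T_1$ and $\dim T_2$, which are complementary in $\Gamma$ because $\dim \Gamma = l = \dim \Lambda$.

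Next I would push these submanifolds into $U^+ \subseteq \Lambda$ by applying the diffeomorphism $\Omega^+_\Gamma$. By construction, $\Omega^+_\Gamma(W^s(T_2) \cap \Gamma) = T_2 \cap U^+$, whereas
\[\Omega^+_\Gamma(W^u(T_1) \cap \Gamma) = \Omega^+_\Gamma \circ (\Omega^-_\Gamma)^{-1}(T_1 \cap U^-) = S_\Gamma(T_1 \cap U^-).\]
Thus the pair $\bigl(W^u(T_1) \cap \Gamma,\, W^s(T_2) \cap \Gamma\bigr)$ inside $\Gamma$ is carried, as a pair, to the pair $\bigl(S_\Gamma(T_1 \cap U^-),\, T_2 \cap U^+\bigr)$ inside $U^+$ by a single $C^1$-diffeomorphism, and their intersection sets are in bijective correspondence.

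Finally I would invoke the invariance of topological crossing under $C^1$-diffeomorphisms: a small homotopy that renders two submanifolds differentiably transverse with nonzero oriented intersection number pushes forward under $\Omega^+_\Gamma$ to a homotopy with the same oriented intersection number on the image side, and the inverse diffeomorphism reverses the process. This gives the equivalence in both directions.

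The main delicate point will be the interpretation of ``inside $\Gamma$'': it has to be read as topological crossing of the two curves $W^u(T_1) \cap \Gamma$ and $W^s(T_2) \cap \Gamma$ as submanifolds of the $l$-dimensional manifold $\Gamma$. The splitting relations \eqref{eqn:channel1} and \eqref{eqn:channel2} are what guarantees that this is the correct intrinsic notion, since they decompose $T_x W^u(\Lambda)$ and $T_x W^s(\Lambda)$ into a foliation leaf direction plus $T_x\Gamma$, so the transversality of $W^u(T_1)$ with $W^s(T_2)$ at a point of $\Gamma$ is governed by the $T_x\Gamma$-directions. Once this intrinsic interpretation is accepted, the proposition reduces to a change-of-coordinates statement via $\Omega^+_\Gamma$.
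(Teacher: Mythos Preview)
Your proposal is correct and follows essentially the same route as the paper: define $\bar T_1=(\Omega^-_\Gamma)^{-1}(T_1\cap U^-)$ and $\bar T_2=(\Omega^+_\Gamma)^{-1}(T_2\cap U^+)$ in $\Gamma$, then use that topological transversality is preserved under the $C^1$-diffeomorphism $\Omega^+_\Gamma$. Your write-up is in fact more explicit than the paper's, spelling out the foliation identification $W^u(T_1)\cap\Gamma=\bar T_1$, the invariance of topological crossing under diffeomorphisms, and the role of the splitting conditions \eqref{eqn:channel1}--\eqref{eqn:channel2} in justifying the intrinsic reading of ``inside $\Gamma$''.
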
 \begin{proof}
Let $\bar T_1=(\Omega^-_\Gamma)^{-1}(T_1\cap U^-)\subseteq \Gamma$ and $\bar T_2=(\Omega^+_\Gamma)^{-1}(T_2\cap U^+)\subseteq \Gamma$. A topologically transverse intersection of $W^u(T_1)$ with  $W^s(T_2)$ in $\Gamma$ occurs if and only if $\bar T_1$ intersects  $\bar T_2$ topologically transversally in $\Gamma$, which is equivalent to
$S_\Gamma(T_1\cap U^-)$ intersects topologically transversally  $T_2\cap U^+$ in $\Lambda$.
\end{proof}

For the definition of topological transversality (topological crossing) see \cite{BurnsW95}. For the main result of this paper  the normally hyperbolic invariant manifold $\Lambda$ is assumed to be $2$-dimensional, i.e., $l=2$, and the invariant submanifolds $T_1,T_2$ in Proposition \ref{prop:trans} are $1$-dimensional invariant tori.

\subsection{Topological method of correctly aligned windows}\label{section:topological}

We describe briefly the topological method of correctly aligned windows. We follow \cite{GideaZ04a}. See also \cite{GideaR03,GideaL06,LochakM05}.

\begin{defn}
An $(n_1,n_2)$-window in an $n$-dimensional manifold $M$, where $n_1+n_2=n$, is a
compact subset $W$ of $M$ together with a
homeomorphism $\chi$ from some open neighborhood  of $[0,1]^{n_1}\times [0,1]^{n_2}$ in $\mathbb{R}^{n_1}\times \mathbb{R}^{n_2}$ to an open subset of $M$, such that  \[W=\chi([0,1]^{n_1}\times [0,1]^{n_2}),\] and with a choice of an `exit set' \[W^{\rm
exit} =\chi \left(\partial[0,1]^{n_1}\times [0,1]^{n_2} \right )\]
and  of an `entry set'  \[W^{\rm entry}
=\chi \left([0,1]^{n_1}\times
\partial[0,1]^{n_2}\right ).\]
\end{defn}

Denote by $\pi_{1}: \mathbb{R}^{n_1}\times \mathbb{R}^{n_2}\to \mathbb{R}^{n_1}$  the   projection onto the first component, and by $\pi_{2}: \mathbb{R}^{n_1}\times \mathbb{R}^{n_2}\to \mathbb{R}^{n_2}$  the projection onto the second component.

\begin{defn}\label{defn:corr}
Let  $W_1$ and $W_2$ be $(n_1,n_2)$-windows, and let $\chi_1$ and $\chi_2$ be the corresponding local parametrizations.
Let $f$ be a continuous
map on $M$ with $f(\textrm {im}(\chi_1))\subseteq \textrm
{im}(\chi_2)$.  We say that $W_1$ is correctly aligned with
$W_2$ under $f$ if the following conditions are satisfied:
\begin{itemize}
\item[(i)]
   $f(W^{\rm exit}_1)  \cap (W_2)  =  \emptyset$ and
 $f(W_1) \cap W_2^{\rm entry}   =  \emptyset$;

\item[(ii)] There exists $y_0 \in [0,1]^{n_2}$ such that the curve $x\in [0,1]^{n_1} \mapsto\hat f (x,y_0)$, where $\hat f:=\chi_2^{-1}\circ f\circ \chi_1$,
has the following properties:

\begin{eqnarray*}\hat f_{y_0}\left ( [0,1]^{n_1}\right )\subseteq \mathbb{R}^{n_1}\times (0,1)^{n_2},\\
\hat f_{y_0}\left ( \partial[0,1]^{n_1}\right )\subseteq
(\mathbb{R}^{n_1}\setminus [0,1]^{n_1})\times (0,1)^{n_2}=\emptyset,\\
\deg(\pi_{1}\circ \hat f_{y_0},0)=w\neq 0.\end{eqnarray*}

\end{itemize}
We call the integer $w\neq 0$ in the above definition the degree of
the alignment.
\end{defn}

The following result is a topological version of the Shadowing
Lemma.
\begin{thm}
%[Existence of orbits with prescribed trajectories]
\label{thm:detorb} Let $\{W_i\}_{i\in\mathbb{Z}}$, be a collection of
$(n_1,n_2)$-windows in $M$,  and let $f_i$ be a collection of continuous maps on
$M$. If for each  $i\in\mathbb{Z}$, $W_i$ is correctly aligned with $W_{i+1}$ under $f_i$, then
there exists a point $p\in W_0$ such that
\[(f_{i}\circ \cdots\circ f_{0})(p)\in W_{i+1}, \textrm{ for all } i\in\mathbb{Z}. \]

Moreover, assuming that there exists $k>0$ such that $W_{i}=W_{(i\,{\rm mod}\, k)}$  and $f_{i}=f_{(i\,{\rm mod}\, k)}$ for all
$i\in \mathbb{Z}$,
then there exists a
point $p$ as above that is periodic in the sense
\[(f_{k-1}\circ \cdots\circ f_{0})(p)=p.\]
\end{thm}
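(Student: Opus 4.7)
The plan is to decompose Theorem \ref{thm:detorb} into a compositional property of correct alignment, a finite-time detection of shadowing orbits via Brouwer degree, and a compactness passage to the bi-infinite orbit; the periodic statement is then handled by a degree-theoretic fixed-point argument applied to one period of the composition.

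First I would establish the \emph{compositional property}: if $W_i$ is correctly aligned with $W_{i+1}$ under $f_i$ for $i=0,\ldots,N-1$, then $W_0$ is correctly aligned with $W_N$ under $F_N:=f_{N-1}\circ\cdots\circ f_0$. In the local coordinates $\chi_0,\chi_N$, Definition \ref{defn:corr}(ii) provides a curve $x\mapsto \hat f_0(x,y_0)$ inside $W_1$ whose $\pi_1$-projection has non-zero Brouwer degree at $0$. Using the alignment of $W_1$ with $W_2$, I would homotope this curve within $W_1\setminus W_1^{\mathrm{exit}}$ to one of the form required by Definition \ref{defn:corr}(ii) for the pair $(W_1,W_2)$, apply $f_1$, and check by homotopy invariance and multiplicativity of degree that the $\pi_1$-projection of the resulting curve in $W_2$ still has non-zero degree. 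The exit/entry disjointness in Definition \ref{defn:corr}(i) ensures that no homotopy crosses an exit face, so the Brouwer degree remains well-defined throughout. Iterating yields the compositional property for $(W_0, W_N, F_N)$.

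Next I would prove the finite-time shadowing: for each $N\geq 1$ the set
\[A_N:=\{p\in W_0\,:\,(f_{i-1}\circ\cdots\circ f_0)(p)\in W_i,\,\, i=1,\ldots,N\}\]
is non-empty and compact. Non-emptiness of $W_0\cap F_N^{-1}(W_N)$ follows from the compositional property, since non-zero degree forces a solution to the relevant equation in the interior of the coordinate cube. To enforce the intermediate constraints simultaneously, I would refine $W_0$ to the sub-window of points landing in each intermediate $W_j$, observing that the compositional property still applies to this refined sub-window, and intersect. For the bi-infinite orbit, I would apply the same argument to the two-sided chain $W_{-N},\ldots,W_N$ to produce $q_N\in W_{-N}$ whose forward iterates $(f_{j-1}\circ\cdots\circ f_{-N})(q_N)$ lie in $W_{-N+j}$ for $j=1,\ldots,2N$. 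Setting $p_N:=(f_{-1}\circ\cdots\circ f_{-N})(q_N)\in W_0$ yields a point with both forward and backward shadowing of length $N$. Compactness of $W_0$ and of each $W_{-k}$ permits a diagonal extraction such that $p_{N_j}\to p^*\in W_0$, all forward iterates $(f_{k-1}\circ\cdots\circ f_0)(p^*)$ lie in the closed set $W_k$, and for each $k\geq 1$ the preimages $p_{-k}^{(N_j)}:=(f_{-k}\circ\cdots\circ f_{-N_j})(q_{N_j})\in W_{-k}$ converge to $p_{-k}^*\in W_{-k}$ with $f_{-k}(p_{-k}^*)=p_{-k+1}^*$ by continuity, producing the desired bi-infinite orbit.

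For the periodic case, the compositional property gives correct alignment of $W_0$ with itself under $F:=f_{k-1}\circ\cdots\circ f_0$. A fixed point of $F$ in $W_0$ then follows from a Brouwer-degree computation: in the coordinates $\chi_0$, the map $(x,y)\mapsto \hat F(x,y)-(x,y)$ is shown to have non-zero degree at the origin over the interior of $[0,1]^{n_1+n_2}$ by combining the non-zero $\pi_1$-degree inherited from the alignment with the non-vanishing of the $\pi_2$-component on $\partial[0,1]^{n_2}$, a consequence of the entry-exclusion. The main obstacle I anticipate is the degree bookkeeping in the compositional step: one must verify that the successive homotopies of the test curve stay clear of the exit faces of all intermediate windows and that the degree of the $\pi_1$-projection accumulates multiplicatively without becoming zero. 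This is precisely controlled by the disjointness conditions in Definition \ref{defn:corr}(i), which carve out the open region on which Brouwer degree is well-defined and homotopy-invariant throughout the chain.
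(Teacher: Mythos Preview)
The paper does not prove Theorem \ref{thm:detorb}; it is stated as background from the literature, with the reader referred to \cite{GideaZ04a} (see also \cite{GideaR03,GideaL06,LochakM05}). So there is no in-paper proof to compare against.

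Your outline follows the standard strategy in those references (degree theory for finite chains, then compactness/diagonal extraction for the bi-infinite orbit, and a fixed-point degree computation for the periodic case). One point deserves more care: the ``compositional property'' as you state it---that $W_0$ is correctly aligned with $W_N$ under $F_N=f_{N-1}\circ\cdots\circ f_0$ in the sense of Definition~\ref{defn:corr}---need not hold verbatim. Condition~(i) requires $F_N(W_0^{\rm exit})\cap W_N=\emptyset$, but once $f_0$ sends $W_0^{\rm exit}$ outside $W_1$, the subsequent maps are unconstrained and may well carry those points back into $W_N$. The proofs in \cite{GideaZ04a} avoid this by not reducing to a single alignment of $W_0$ with $W_N$: instead one builds a single map on a product of cubes (or, equivalently, argues inductively on nested ``horizontal slabs'' that never leave the intermediate windows) and computes its degree directly, so that all intermediate constraints are enforced simultaneously. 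Your sentence about ``refining $W_0$ to the sub-window of points landing in each intermediate $W_j$'' gestures at this, but that refined set is not itself a window in general, so the degree machinery has to be set up on the chain rather than on a single refined object.
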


The correct alignment of windows is robust, in the sense that if two
windows are correctly aligned under a map, then they remain
correctly aligned under a sufficiently small $C^0$-perturbation of the
map. Robustness makes the method of correctly aligned windows
appropriate for perturbative arguments,  as well as for rigorous numerical experiments.

Also, the correct alignment satisfies a natural product property.
Given two windows and a map, if each window can be written as a
product of window components, and if the components of the first
window are correctly aligned with the corresponding components of
the second window under the appropriate components of the map, then
the first window is correctly aligned with the second window under
the given map. For example, if we consider a pair of windows in a neighborhood of a
normally hyperbolic invariant manifold, if the center components
of the windows are correctly aligned and the hyperbolic components of the windows
are also correctly aligned, then the windows are correctly aligned.
Although the product property is quite intuitive, its
rigorous statement is rather technical, so we will omit it here.
The details can be found in \cite{GideaL06}.

In Sections \ref{section:shadowing} and \ref{section:proof}, we will consider various  windows lying on one of the manifolds $\Lambda$, $W^u(\Lambda)$, $W^s(\Lambda)$, or $M$.
Without explicit mention, every such a window will be represented by the image of a rectangle through a local parametrization of the appropriate manifold.   We will also consider correct alignment relations of windows lying on the same manifold. All the correct alignment relations in the arguments presented later in this paper have degree $w=1$.

\section{Existence of Birkhoff connecting orbits}

In this section we state and prove an extension of the Corollary \ref{cor:birkhoff} of the Birkhoff connecting orbit theorem, and an extension of Mather's theorem on shadowing of Aubry-Mather sets, specifically of Theorem \ref{thm:diagonal}. The  methodology is based on  the topological approach of Hall and on  the Jordan Curve Theorem. The statements below will be used  in the proof of the main theorem. Throughout the  section we assume that $f$ is an orientation preserving, boundary preserving, area preserving, monotone twist map of the annulus $A$, as in Section \ref{section:aubrymathersets}, and we adopt the notation conventions from that section.

In Corollary \ref{cor:birkhoff}, it was assumed that the restrictions of the map to the boundary tori of the BZI are topologically transitive, and it was inferred  the existence of connecting orbits from an arbitrarily small neighborhood of some prescribed point on one boundary torus to an arbitrarily small neighborhood of some prescribed point on the other boundary torus. In the statements below we prove the same result without the topological transitivity assumption.

\begin{thm}\label{thm:extensionbirkhoff2}
Suppose that $T_1$ and $T_2$ bound a BZI $\mathcal{Z}$. Assume that $\zeta_1\in T_1$ and $\zeta_2\in T_2$. Fix a  pair of neighborhoods $U$ of $\zeta_1$ and $V$ of $\zeta_2$. Then   there exists a point $z\in U$ and an integer $N>0$ such that $f^{N}(z)\in V$ for some $N>0$ that can be chosen arbitrarily large.

Moreover, if we assume that $U,V$ are chosen so that $U\cap\mathcal{Z},V\cap\mathcal{Z}$ are topological disks, then there exists a point $z'\in \partial U$ such that $f^N(z')\in \partial V$.
\end{thm}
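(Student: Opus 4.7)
The plan is to combine the Birkhoff Connecting Theorem (Theorem~\ref{thm:birkhoff}) with Lemma~\ref{lem:kaloshin}, and to use the Jordan Curve Theorem to promote a ``generic'' connection between neighborhoods of $T_1,T_2$ into a connection between the prescribed points $\zeta_1,\zeta_2$. The starting point is that by the Aubry--Mather Theorem~\ref{thm:aubrymather} there exists an Aubry--Mather set $\Sigma_\omega$ inside the BZI with a recurrent point $p$, and Lemma~\ref{lem:kaloshin} gives that for any small neighborhood $W(p)$, the sets $W^{\pm\infty}=\bigcup_{j\geq 0}f^{\pm j}(W(p))$ are open, connected, and each has closure containing both $T_1$ and $T_2$. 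In particular $W^{-\infty}\cap U\neq\emptyset$ and $W^{+\infty}\cap V\neq\emptyset$, so iterates of $U$ ``reach'' the neighborhood $W(p)$ and iterates of $W(p)$ reach $V$.

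I would reduce the problem to showing that for short vertical arcs $\gamma_1\subseteq U$ with endpoint $\zeta_1\in T_1$ and $\gamma_2\subseteq V$ with endpoint $\zeta_2\in T_2$, there exist arbitrarily large $N$ with $f^N(\gamma_1)\cap \gamma_2\neq\emptyset$; the crossing point is then a $z\in\gamma_1\subseteq U$ with $f^N(z)\in\gamma_2\subseteq V$. Since $T_1$ is $f$-invariant, $f^n(\gamma_1)$ is an arc anchored at $f^n(\zeta_1)\in T_1$. Using Lemma~\ref{lem:kaloshin}, the Birkhoff Connecting Theorem, and the monotone twist property, for $n$ large enough the arc $f^n(\gamma_1)$ should contain a sub-arc stretching from a neighborhood of $T_1$ across the BZI to a neighborhood of $T_2$, with $\phi$-extent that eventually wraps around the entire cylinder. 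The Jordan Curve Theorem in the universal cover of the annulus then forces an intersection with the fixed arc $\gamma_2$. For the ``arbitrarily large $N$'' conclusion, I would bootstrap: given $z\in U$ with $f^{N_1}(z)\in V$, apply the same argument with a small open set $V'\subseteq V$ containing $f^{N_1}(z)$ playing the role of $U$, producing further returns to $V$ at arbitrarily large times.

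For the second assertion, where $U\cap\mathcal{Z}$ and $V\cap\mathcal{Z}$ are topological disks, let $C$ be the connected component of $U\cap f^{-N}(V)$ containing the interior point $z$ just constructed. By continuity, $\partial C\cap U\subseteq f^{-N}(\partial V)$. A Jordan curve / degree argument shows that $C$ cannot be compactly contained in the interior of $U$: otherwise $f^N|_{\partial C}$ would send a Jordan curve into $\partial V$ while $f^N(\mathrm{int}(C))\subseteq V$, which is inconsistent with $f^N$ being a local homeomorphism of nontrivial local degree at $z$ relative to the disk $V$. Hence $\partial C\cap \partial U\neq\emptyset$, and any $z'$ in this intersection satisfies $f^N(z')\in \partial V$. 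The main obstacle will be the crossing claim itself: controlling the geometry of $f^n(\gamma_1)$ well enough to guarantee an intersection with $\gamma_2$. This relies on the BZI hypothesis (to rule out invariant primary tori trapping iterates in a sub-region), the monotone twist property (to keep $f^n(\gamma_1)$ transverse to $T_1$ and in ``good position''), and Lemma~\ref{lem:kaloshin} (to ensure the iterated arc spreads throughout the BZI); the Jordan Curve Theorem then delivers the crossing once the spread in the $\phi$-direction is sufficient.
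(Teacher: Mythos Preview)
Your proposal identifies the right ingredients (Aubry--Mather sets, Lemma~\ref{lem:kaloshin}, twist/tilt, Jordan Curve Theorem) and the right overall shape, but the central step---the ``crossing claim'' that for large $n$ the arc $f^n(\gamma_1)$ stretches across $\mathcal{Z}$ and wraps around the cylinder enough to meet the fixed short arc $\gamma_2$---is asserted, not proved. This is precisely the hard part. An iterate $f^n(\gamma_1)$ is a positively tilted arc anchored on $T_1$; Lemma~\ref{lem:kaloshin} tells you it eventually enters a small $W(p)$ near some Aubry--Mather set, and Birkhoff's theorem gives \emph{some} orbit crossing the zone, but neither guarantees that a \emph{single} iterate of $\gamma_1$ spans a full fundamental domain in $\phi$, nor that it comes near the specific target point $\zeta_2$. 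Your bootstrap for ``arbitrarily large $N$'' is also off: $V'\subseteq V$ is a neighborhood of a point near $T_2$, not $T_1$, so re-applying the statement with $V'$ in the role of $U$ does not produce further $T_1\to T_2$ connections.

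The paper's argument is substantially more delicate and does not rely on a single iterate wrapping around. It fixes \emph{three} vertically ordered Aubry--Mather sets $\Sigma_{\rho_1}\prec\Sigma_{\rho_1'}\prec\Sigma_{\rho_1''}$ and shows that two \emph{different} iterates $f^{j_*}(U_0)$ and $f^{j'_*}(U_0)$, tracked via the gaps of $\Sigma_{\rho_1}$ and $\Sigma_{\rho_1'}$ they cross, must (because $\rho_1<\rho_1'$ forces the gaps to drift apart) eventually intersect above $C_{\rho_1}$. The complement $\mathcal{Z}\setminus[f^{j_*}(U_0)\cup f^{j'_*}(U_0)]$ then has a disk component $\mathcal{U}$ which is a genuine neighborhood of a point of $\Sigma_{\rho_1}$. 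The same is done from the $V$ side to produce $\mathcal{V}$, and then the \emph{Mather} Connecting Theorem (Theorem~\ref{thm:mathershadowing}), not Birkhoff's, supplies an orbit from $\mathcal{U}$ to $\mathcal{V}$; a Jordan curve argument finishes. Your sketch misses both the two-iterate bracketing mechanism and the role of Theorem~\ref{thm:mathershadowing}. For the boundary statement, your degree argument is not right as written; the paper's version is simpler: $f^N(U_0)$ and $V_0$ are disks sharing an interior point but each contains points the other does not (namely points of $T_1$, resp.\ $T_2$), so their boundaries must meet.
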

\begin{proof}
Consider the neighborhood $U$ of $\zeta_1\in T_1$.
Choose two points $\zeta'_1,\zeta''_1\in T_1\cap U$ such that $\zeta_1$ is between $\zeta'_1$ and $\zeta''_1$ and the portion of $T_1$ between $\zeta'_1$ and $\zeta''_1$ is contained in $\textrm{int}(U)$.
Choose a simple curve $\gamma_0$ inside $U$,  with endpoints at $\zeta'_1$ and $\zeta''_1$.
The curve $\gamma_0$ together with the portion of $T_1$ between  $\zeta'_1$ and $\zeta''_1$ determines  a closed topological disk $U_0\subseteq U$, which is a one-sided compact neighborhood of $\zeta_1$ in $\mathcal{Z}$.

Similarly, we can   choose a one-sided compact neighborhood $V_0\subseteq V$ of $\zeta_2\in T_2$, whose boundary consists of a simple curve $\eta_0$  connecting two points $\zeta'_2,\zeta''_2\in T_2$, with $\zeta_2$ between $\zeta'_2$ and $\zeta''_2$, and the portion of $T_2$ between $\zeta'_2$ and $\zeta''_2$.

In this way, for proving the theorem  we can consider the one-sided neighborhoods  $U_0,V_0$ instead of $U,V$, respectively.

Assume first that the interior of $U_0$ meets some Aubry-Mather set $\Sigma_{\rho_1}\subseteq \mathcal{Z}$, and that the interior of $V_0$ meets some Aubry-Mather set $\Sigma_{\rho_2}\subseteq \mathcal{Z}$. Since $U_0$ and $V_0$ are neighborhoods of points in the Aubry-Mather sets $\Sigma_{\rho_1}$ and $\Sigma_{\rho_2}$ respectively, Theorem \ref{thm:mathershadowing} yields the existence of a forward orbit that goes from $U_0$ to $V_0$. Hence there exists $N>0$ such that    $f^N(U_0)\cap V_0\neq \emptyset$. Since $f^N(U_0)$ and $V_0$ are open topological disks that have common points as well as non-common points (e.g., the points in $T_1$ and $T_2$, respectively), the Jordan Curve Theorem implies that $f^N(\partial  U_0)\cap \partial  V_0\neq \emptyset$.

Assume now that the interiors of $U_0,V_0$  do not meet any Aubry-Mather set.

We choose  three Aubry-Mather sets $\Sigma_{\rho_1}$, $\Sigma_{\rho_1'}$, $\Sigma_{\rho_1''}$ in $\mathcal{Z}$,  lying on three essential circles $C_{\rho_1},C_{\rho'_1},C_{\rho''_1}$, respectively, with ${\rho_1}<{\rho'_1}<{\rho''_1}$ irrational rotation numbers, and $C_{\rho_1}\prec C_{\rho'_1}\prec C_{\rho''_1}$.
The existence of such vertically ordered Aubry-Mather sets follows from Theorem \ref{thm:gole}.

The proof of the theorem uses  the following intermediate step.

\textit{Claim. There exist $j'_*>j_*>0$ such that  $\mathcal{Z}\setminus[f^{j_*}(U_0)\cup f^{j'_*}(U_0)]$ contains a component $\mathcal{U}$ which is an open topological disk that is a neighborhood of some point in $\Sigma_{\rho_1}$.
Moreover, $j_*,j'_*$ can be chosen arbitrarily large.
A similar statement holds for $T_2$.}

\textit{Proof of the claim.} Let $p_1$ be a point in $\Sigma_{\rho''_1}$. Let $W(p_1)$ be a small neighborhood of $p_1$ inside the BZI, which does not intersect $\Sigma_{\rho_1}$  and $\Sigma_{\rho_1'}$. By assumption, $U_0$ does not meet any of the sets $\Sigma_{\rho_1}$, $\Sigma_{\rho'_1}$, $\Sigma_{\rho''_1}$. By Lemma \ref{lem:kaloshin} (iii) the closure of $\bigcup _{j=0}^{\infty}f^{-j}(W(p_1))$ contains $T_1$, and in particular $\zeta_1$. Since $U_0$ is a one-sided neighborhood of $\zeta_1$, there exists $j_1>0$ such that $f^{j_1}(U_0)\cap W(p_1)\neq \emptyset$.
In the covering space of the annulus, $f^{j_1}(U_0)$ intersects some copy $W^{h_1}:=W(p_1)+(h_1,0)$ of $W(p_1)$, where $h_1$ is some positive integer.
Since $U_0$ does not intersect $\Sigma_{\rho_1}$ and $\Sigma_{\rho'_1}$, it follows that $f^{j_1}(U_0)$ does not intersect $\Sigma_{\rho_1}$ and $\Sigma_{\rho'_1}$. On the other hand,  $f^{j_1}(U_0)$ intersects the  essential circles $C_{\rho_1},C_{\rho'_1}$, containing the Aubry-Mather sets $\Sigma_{\rho_1},\Sigma_{\rho'_1}$, respectively.

Let $\gamma_{1}:[0,1]\to U_0$ be a vertical curve, i.e., $\gamma_1(0)\in T_1$ and  $\pi_x(\gamma_1(t))=\pi_x(\gamma_1(0))$ for  all $t$, such that $f^{j_1}(\gamma_1(1))$ is an intersection point of $f^{j_1}(U_0)$ with $W^{h_1}$. The curve $f^{j_1}(\gamma_1)$  is a positively tilted curve which crosses both essential circles $C_{\rho_1}$ and $C_{\rho'_1}$. (See Remark  \ref{rem:tilt}.)
Since $f^{j_1}(U_0)$ is disjoint from $\Sigma_{\rho_1}$, the intersections between $f^{j_1}(\gamma_1)$ and $C_{\rho_1}$
occur within the `gaps' of  $\Sigma_{\rho_1}$, i.e., within the open interval components  of $C_{\rho_1}\setminus \Sigma_{\rho_1}$.

We can assign an oriented intersection number   between $f^{j_1}(\gamma_1 )$ and each gap of $\Sigma_{\rho_1}$.
Consider a homotopy $h_s:A\to A$, $s\in[0,1]$, such that $f^{j_1}(h_s(\gamma_1))$ keeps the endpoints of $f^{j_1}(\gamma_1)$ fixed for all $s$, $f^{j_1}(h_s(\gamma_1))$ does not intersect  $\Sigma_{\rho_1}$ for any $s\in[0,1]$, and $f^{j_1}(h_1(\gamma_1))$ is transverse to $C_{\rho_1}$ (see \cite{BurnsW95}).
We set the oriented intersection number of $f^{j_1}(h_1(\gamma_1))$ with $C_{\rho_1}$ to be $+1$ at a point where the curve moves from below $C_{\rho_1}$ to above $C_{\rho_1}$ as $t$ increases, and to be $-1$ at a point where the curve moves from above $C_{\rho_1}$ to below $C_{\rho_1}$ as $t$ increases. Then we   assign an oriented intersection number between $f^{j_1}(h_1(\gamma_1))$  and a gap of $\Sigma_{\rho_1}$, by adding the oriented intersection numbers for all of the intersection points within that gap.
Since the oriented intersection number is preserved by homotopy, the oriented intersection number between $f^{j_1}( \gamma_1)$ and a gap is, by definition, the oriented intersection number between $f^{j_1}(h_1(\gamma_1))$ and that gap. Then the oriented intersection number between $f^{j_1}( \gamma_1 )$ and $C_{\rho_1}$ is the sum of the oriented intersection numbers over all gaps.

Since the curve $f^{j_1}(\gamma_1)$ starts from below $C_{\rho_1}$ and ends above $C_{\rho_1}$,   there exists a gap for which the oriented intersection number with $f^{j_1}(\gamma_1)$ is positive.

We follow the curve $t\mapsto  f^{j_1}(\gamma_1(t))$ starting with $t=0$ and we mark the first gap of $\Sigma_{\rho_1}$ that is crossed  by
$f^{j_1}(\gamma_1)$ with a positive oriented intersection number; we denote by $a^1_{\rho_1},b^1_{\rho_1}$   the endpoints of this gap.
This means that if $f^{j_1}(\gamma_1)$ crosses other gaps of $\Sigma_{\rho_1}$  that are to the left of this gap, it does so with $0$ oriented intersection number. (If the first gap crossed would be crossed with negative oriented intersection number, it would violate the positive tilt condition of $f^{j_1}(\gamma_1)$.) Thus,
when the curve  $f^{j_1}(\gamma_1)$ crosses the gap between $a^1_{\rho_1},b^1_{\rho_1}$, it comes from below the circle $C_{\rho_1}$.
Following the  curve segment of $f^{j_1}(\gamma_1(t))$  after crossing the gap of endpoints $a^1_{\rho_1},b^1_{\rho_1}$,  we mark the first gap of $\Sigma_{\rho'_1}$ that is crossed  by
$f^{j_1}(\gamma_1)$ with positive oriented intersection number, and we  denote by $a^1_{\rho'_1},b^1_{\rho'_1}$  its endpoints.
Similarly, when the curve  $f^{j_1}(\gamma_1)$ crosses the gap between $a^1_{\rho'_1},b^1_{\rho'_1}$, it comes from below the circle $C_{\rho'_1}$.

We claim that the left endpoint of the lower gap is to the right of the right endpoint of the upper gap,  i.e., $\pi_x(a^1_{\rho_1})<\pi_x(b^1_{\rho'_1})$.
Otherwise, if $\pi_x(a^1_{\rho_1})\geq\pi_x(b^1_{\rho'_1})$, then there exists an arc $f^{j_1}(\gamma_1(s))$, $s\in[s_1,s_2]$, such that $\pi_x(f^{j_1}(\gamma_1(s_1))=\pi_x(f^{j_1}(\gamma_1(s_2))=\pi_x(a^1_{\rho_1})$, $\pi_y(f^{j_1}(\gamma_1(s_1))<\pi_y(f^{j_1}(\gamma_1(s_2))$, and $\pi_x(f^{j_1}(\gamma_1(s))\geq\pi_x(a^1_{\rho_1})$ for all $s\in(s_1,s_2)$. This implies that either the angle deviation from  the vertical $\theta(s)$ along the curve  $f^{j_1}(\gamma_1)$  becomes non-positive for some $s\in(s_1,s_2)$, or that there exists another arc   $f^{j_1}(\gamma_1(\tau))$, $\tau\in[\tau_1,\tau_2]$, with $\tau_1<\tau_2<s_1<s_2$, such that $\pi_x(f^{j_1}(\gamma_1(\tau_1))=\pi_x(f^{j_1}(\gamma_1(\tau_2))=\pi_x(a^1_{\rho_1})$, $\pi_y(f^{j_1}(\gamma_1(\tau_1))>\pi_y(f^{j_1}(\gamma_1(\tau_2))$, and $\pi_x(f^{j_1}(\gamma_1(s))\leq\pi_x(f^{j_1}(\gamma_1(\tau))$ for all $s\in(s_1,s_2)$ and $\tau\in(\tau_1,\tau_2)$.
In the first case we obtain a   contradiction with the positive tilt condition on the curve  $f^{j_1}(\gamma_1)$. See Fig. \ref{fig:violatetilt}. In the second case we obtain a contradiction with the fact that $f^{j_1}(\gamma_1)$ comes from below $C_{\rho_1}$ before crossing the gap of endpoints $a^1_{\rho_1},b^1_{\rho_1}$.
The conclusion of this step is that the curve $f^{j_1}(\gamma_1(t))$ passes through the gap  between $a^1_{\rho_1}$ and $b^1_{\rho_1}$ of  $\Sigma_{\rho_1}$,  from below $C_{\rho_1}$ to above  $C_{\rho_1}$,   then it
passes through the gap between $a^1_{\rho'_1}$ and $b^1_{\rho'_1}$ of  $\Sigma_{\rho'_1}$,  from below $C_{\rho'_1}$ to above  $C_{\rho'_1}$, and $\pi_x(a^1_{\rho_1})<\pi_x(b^1_{\rho'_1})$.

\begin{figure} \centering
\includegraphics*[width=0.75\textwidth, clip, keepaspectratio]
{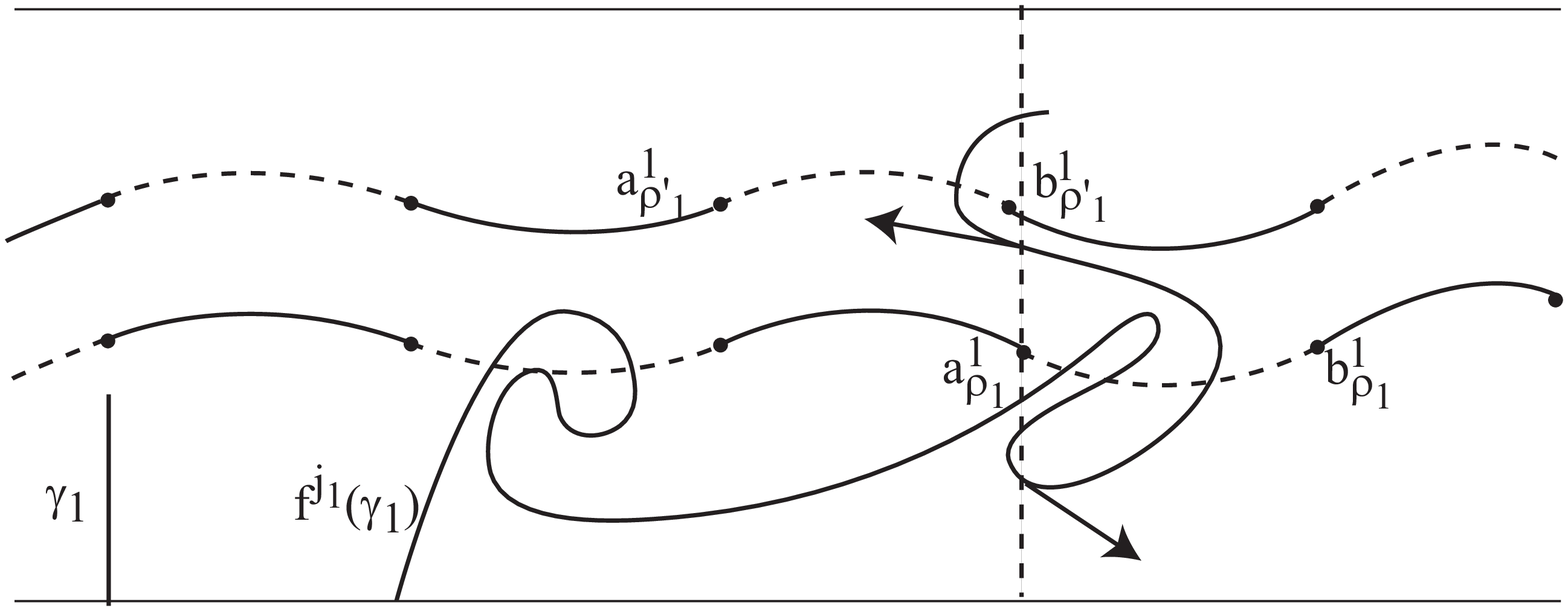}
    \caption[]{Violation of the positive tilt condition.}
    \label{fig:violatetilt}
\end{figure}

Now we consider a one-sided rectangular neighborhood $U_1\subseteq U_0$ of some point in $T_1$, bounded below by $T_1$, to the left by $\gamma_1$, and to the right by some other vertical curve segment $\gamma_1'$. If $\gamma'_1$ is sufficiently close to $\gamma_1$, then, by continuity,  the image of each vertical curve in $U_1$ under $f^{j_1}$  crosses the gap  between $a^1_{\rho_1}$ and $b^1_{\rho_1}$ of  $\Sigma_{\rho_1}$  with positive oriented intersection number, and crosses the gap between $a^1_{\rho'_1}$ and $b^1_{\rho'_1}$ of   $\Sigma_{\rho'_1}$ with positive oriented intersection number. We choose and fix a set $U_1\subseteq U_0$ with these properties.  By Lemma \ref{lem:kaloshin} (iii) (see also Remark \ref{rem:lemma}) the closure of $\bigcup _{j=0}^{\infty}f^{-j}(W(p_1))$ contains $T_1$, so there exists $j_2>j_1$ such that, in the annulus,  $f^{j_2}(U_1)\cap W(p_1)\neq \emptyset$, and, in the covering space of the annulus, $f^{j_2}(U_1)$ intersects some copy $W^{h_2}:=W(p_1)+(h_2,0)$ of $W(p_1)$ for some positive integer $h_2>h_1$. (Due to the positive twist condition on $f$ and the assumption  that the rotation numbers on the boundary components of the annulus are positive, the  vertical line $\{x=\pi_x(p_1)+h_1\}$ is mapped by $f^{j_2-j_1}$  to a positive tilted map to the right of $\{x=\pi_x(p_1)+h_1\}$, hence, if $j_2$ is large enough, we can choose $h_2>h_1$.)

Then there exists a vertical curve  $\gamma_2:[0,1]\to U_1$, such that $f^{j_2}(\gamma_2(1))$ is an intersection point of $f^{j_2}(U_1)$ with $W^{h_2}$. The curve $f^{j_2}(\gamma_2(t))$  crosses $C_{\rho_1}$ and $C_{\rho'_1}$. Let   $a^2_{\rho_1},b^2_{\rho_1}$  be the endpoints of the leftmost gap of $\Sigma_{\rho_1}$ that is crossed by $f^{j_2}(\gamma_2(t))$ with  positive oriented intersection number  equal, and let $a^2_{\rho'_1},b^2_{\rho'_1}$  be the endpoints of the  leftmost gap of $C_{\rho'_1}$ that is crossed by $f^{j_2}(\gamma_2(t))$ with positive oriented intersection number. The image curve  $f^{j_2}(\gamma_2)$ is a positively tilted curve located on the `right side' of the positively tilted curve $f^{j_2}(\gamma_1)$, in the sense that any graph over $x$ that intersects both $f^{j_2}(\gamma_1)$ and $f^{j_2}(\gamma_2)$ has the leftmost intersection point with the  $f^{j_1}(\gamma_1)$.
Therefore, the gap endpoints $a^2_{\rho_1},b^2_{\rho_1}$ are either  the image under $f^{j_2-j_1}$ of the gap endpoints $a^1_{\rho_1},b^1_{\rho_1}$ found at the previous step, or are the image
under $f^{j_2-j_1}$ of some other gap endpoints of $\Sigma_{\rho_1}$  located to the right of the gap between $a^1_{\rho_1}$ and $b^1_{\rho_1}$. Similarly, the gap endpoints  $a^2_{\rho'_1},b^2_{\rho'_1}$  are either the image under $f^{j_2-j_1}$ of the gap endpoints $a^1_{\rho'_1},b^1_{\rho'_1}$ from the previous step, or are the image
under $f^{j_2-j_1}$ of some other gap  of $\Sigma_{\rho'_1}$  located to the right of the gap between $a^1_{\rho'_1}$ and $b^1_{\rho'_1}$.

Then, there exists a one-sided rectangular neighborhood $U_2\subseteq U_1$ of some point in $T_1$, bounded below by $T_1$, to the left by $\gamma_2$, and to the right by some other vertical curve segment $\gamma'_2$, such that  the image of each vertical curve in $U_2$ under $f^{j_2}$   crosses the gap between $a^2_{\rho_1}$ and $b^2_{\rho_1}$ of  $\Sigma_{\rho_2}$  with positive oriented intersection number, and  crosses the gap between $a^2_{\rho'_1}$ and $b^2_{\rho'_1}$ of   $\Sigma_{\rho'_1}$ with positive oriented intersection number.

Recursively, we obtain a nested  sequence of one-sided neighborhoods of points in $T_1$, denoted $U_1\supseteq U_2\supseteq \ldots U_m\supseteq  \ldots$, all contained in $U_0$, and two  sequences of positive integers $j_1<j_2<\ldots<j_m<\ldots$ and $h_1<h_2<\ldots<h_m<\ldots$   with the following properties:
\begin{itemize}
\item [(i)] each set  $U_m$ is a topological rectangle consisting of vertical curves starting from $T_1$, bounded on the left-side by a vertical curve $\gamma_m$ and on the right by a vertical curve $\gamma'_m$;
\item [(ii)] $f^{j_m}(U_m)\cap W^{h_m}\neq \emptyset$, where $W^{h_m}:=W(p_1)+(h_m,0)$;
\item [(iii)] the image of each vertical curve in $U_m$ under $f^{j_m}$ crosses $C_{\rho_1}$ with positive oriented intersection number  through a gap between $a^m_{\rho_1}$ and $b^m_{\rho_1}$ of $\Sigma_{\rho_1}$, and it crosses $C_{\rho'_1}$ with positive  oriented intersection number through a gap between $a^m_{\rho'_1}$ and $b^m_{\rho'_1}$ of $\Sigma_{\rho'_1}$;
\item[(iv)]  $\pi_x(a^m_{\rho_1})<\pi_x(b^m_{\rho'_1})$;
\item [(v)] the gap endpoints $ a^m_{\rho_1},b^m_{\rho_1} $ are  the images under $f^{j_m-j_{m-1}}$ of  the gap endpoints $ a^{m-1}_{\rho_1},b^{m-1}_{\rho_1}$,  or of the endpoints of some other gap in  $\Sigma_{\rho_1}$ located to the right  side of this gap; the gap endpoints $ a^m_{\rho'_1},b^m_{\rho'_1}$ are  the images under $f^{j_m-j_{m-1}}$   of  the gap endpoints $a^{m-1}_{\rho_1}$, $b^{m-1}_{\rho'_1}$, or of the endpoints of some other gap in  $\Sigma_{\rho'_1}$ located to the right  side of this gap.
\end{itemize}

The endpoints of a gap of  $\Sigma_{\rho_1}$ or  $\Sigma_{\rho'_1}$ are mapped by $f$  into the endpoints of some other gap of $\Sigma_{\rho_1}$ or  $\Sigma_{\rho'_1}$, respectively. Also, the order of the gaps is preserved under iteration. The endpoints of the gap in  $\Sigma_{\rho_1}$   are iterated with rotation number $\rho_1$, and the endpoints of the gap  in  $\Sigma_{\rho'_1}$   are iterated with rotation number $\rho'_1>\rho_1$. Then, for some sufficiently large iterate $j_m$ the order of the gaps gets  reversed  in the annulus. That is, we  get the following ordering in terms of the angle coordinate in the covering space of the  annulus:
\begin{itemize}
\item[(i)] $\pi_x(a^m_{\rho_1})<\pi_x(b^m_{\rho_1})< \pi_x(a^1_{\rho_1})+h_m<\pi_x(b^1_{\rho_1})+h_m$,
\item[(ii)] $\pi_x(a^1_{\rho'_1})+h_m<\pi_x(b^1_{\rho'_1})+h_m<\pi_x(a^m_{\rho'_1})<\pi_x(b^m_{\rho'_1})$.
\end{itemize}

Since $f^{j_1}(U_1)$ and $f^{j_m}(U_m)$ are connected, the above ordering of the crossings with $C_{\rho_1}$ and $C_{\rho'_1}$ implies that  $f^{j_1}(U_1)$ and $f^{j_m}(U_m)$ have an intersection point above $C_{\rho_1}$. As $U_1, U_m\subseteq U_0$, it follows that $f^{j_1}(U_0)$ and $f^{j_m}(U_0)$ have an intersection point above $C_{\rho_1}$. Since $f^{j_1}(U_0)$ and $f^{j_m}(U_0)$ are connected and  disjoint from the Aubry-Mather set $\Sigma_{\rho_1}\subseteq C_{\rho_1}$, there exists a component $\mathcal{U}$ of the  complement $\mathcal{Z}\setminus [f^{j_1}(U_0)\cup f^{j_m}(U_0)]$ which is an open topological disk containing some point $\xi_1\in\Sigma_{\rho_1}$. The  boundary of $\mathcal{U}$ consists of  a finite union of sub-arcs of the boundaries of $f^{j_1}(U_0)$ and $f^{j_m}(U_0)$, and possibly  of curve segments of $T_1$. See Figure~\ref{arch2}. Letting $j_*=j_1$ and $j'_*=j_m$  ends the proof of the claim.

\begin{figure} \centering
\includegraphics*[width=0.6\textwidth, clip, keepaspectratio]
{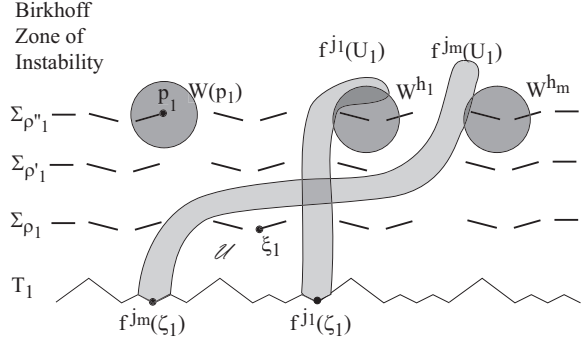}
    \caption[]{An open topological disk forming a neighborhood  of a point in the Aubry-Mather set $\Sigma_{\rho_1}$.}
    \label{arch2}
\end{figure}

We now apply the statement of the claim  to $T_2$, starting with the one-sided neighborhood $V_0$ of $\zeta_2\in T_2$.
We choose  three Aubry-Mather sets $\Sigma_{\rho_2}$, $\Sigma_{\rho_2'}$, $\Sigma_{\rho_2''}$ in $\mathcal{Z}$,  lying on three essential circles $C_{\rho_2},C_{\rho'_2},C_{\rho''_2}$, respectively, with ${\rho_2}>{\rho'_2}>{\rho''_2}$ irrational rotation numbers, and $C_{\rho_2} \succ C_{\rho'_2} \succ C_{\rho''_2}$.

The statement in the claim implies that there exists a neighborhood $\mathcal{V}$ of some point $\xi_2\in \Sigma_{\rho_2}$, homoeomorphic to an open disk,  whose boundary consists of  a finite union of sub-arcs in the boundaries of $f^{-l_*}(V_0)$ and $f^{-l'_*}(V_0)$, for some $l_*<l'_*$, and possibly a finite union of curve segments of $T_2$.

By Theorem \ref{thm:mathershadowing} there is an orbit that goes from  $\mathcal{U}$ to  $\mathcal{V}$, i.e., there exists $k_*$ such that $f^{k_*}(\mathcal{U})\cap \mathcal{V}\neq \emptyset$.
The boundary of $\mathcal{U}$ (resp. $\mathcal{V}$) is a simple closed curved separating the annulus into two connected components.
Also, the boundary of each of $f^{j_*}(U_0),f^{j'_*}(U_0),f^{-l_*}(V_0),f^{-l'_*}(V_0)$ is a simple closed curve.
Since $f^{k_*}(\mathcal{U})\cap \mathcal{V}\neq \emptyset$, the Jordan Curve Theorem implies that either $\partial f^{k_*}(\mathcal{U}) \cap \textrm{int}[f^{-l_*}(V_0)\cup f^{-l'_*}(V_0)] \neq\emptyset $ or  $\partial \mathcal{V} \cap \textrm{int}[f^{j_*}(U_0)\cup f^{j'_*}(U_0)]\neq\emptyset$.

It follows that   $\textrm{int}(f^{k_*+j_*}(U_0))$ or $\textrm{int}(f^{k_*+j'_*}(U_0))$ has a non-empty intersection with $\textrm{int}(f^{-l_*}(V_0))$ or $\textrm{int}(f^{-l'_*}(V_0))$. Thus, some forward iterate of $\textrm{int}(U_0)$  intersects $\textrm{int}(V_0)$. Hence there exists  $N>0$ such that $f^N(\textrm{int}(U _0))\cap\textrm{int}(V_0) \neq\emptyset$.   Since the sets $f^{N}(U_0)$ and $V_0$ are topological disks have interior points in common, but also points that are not in common (namely the points lying on $T_1$ and $T_2$, respectively), the Jordan Curve Theorem implies that   $f^{N}(\partial  U _0)\cap\partial V_0\neq\emptyset$.

The remaining case of the proof, when the interior of $U_0$  does intersect  some Aubry-Mather set and the interior of $V_0$ does not, or when the interior of $U_0$ does intersect  some Aubry-Mather set and the interior of $V_0$ does not, follows easily from the above arguments.
\end{proof}

The next statement says that given  two points on the boundary tori of  a BZI, and  a finite sequence of Aubry-Mather sets inside the zone, there exists an orbit that starts in a prescribed neighborhood of the point on the lower boundary torus, then moves on and shadows, in the sense of the ordering of the orbit,  each Aubry-Mather set in the sequence, and ends in a prescribed neighborhood of the point on the upper boundary torus. This result extends  Theorem \ref{thm:diagonal}, and relies on the topological argument of  Hall. As in the previous theorem, we do not need any extra conditions on the dynamics on the boundary tori. The resulting shadowing orbits are not necessarily minimal.

\begin{thm}\label{thm:extensionmathershadowing}
Suppose that $T_1$ and $T_2$ bound a BZI $\mathcal{Z}$. Let $\zeta_1\in T_1,\zeta_2\in T_2$, $U$ be a neighborhood of $\zeta_1$, and $V$ a neighborhood of $\zeta_2$. Let $\{\Sigma_{\omega_s}\}_{s\in\{1,\ldots,\sigma\}}$  be a finite collection  of Aubry-Mather sets inside $\mathcal{Z}$ such that each $\Sigma_{\omega_s}$ lies on some essential circle $C_{\omega_s}$ that is a graph over the $x$-coordinate, with $C_{\omega_s}\prec C_{\omega_{s'}}$ provided $\omega_s<\omega_{s'}$. Let  $\{n_s\}_{s=1,\ldots, \sigma}$ be sequence of positive integers. Then there exist   a point $z\in U$,  and a sequence of positive integers $\{m_s\}_{s=0,\ldots,\sigma}$,  such that, for each $s\in\{1,\ldots,\sigma\}$,
\begin{equation}\label{eqn:hallorder}\begin{split}\pi_x(f^j(w_s))<\pi_x(f^j(z))<\pi_x(f^j(\bar w_s))\textrm{ for } \\ \sum_{t=1}^{s-1} n_t+\sum_{t=0}^{s-1}m_t\leq j\leq \sum_{t=1}^{s} n_t+\sum_{t=0}^{s-1}m_t,\end{split}\end{equation}
where $w_s$ and $\bar w_s$ are some points in the Aubry-Mather set $\Sigma_{\omega_s}$,
and $f^{N}(z)\in V$ for $N=\sum_{t=1}^{\sigma} n_t+\sum_{t=0}^{\sigma}m_t$. The number $N$ can be chosen arbitrarily large.

Moreover, if $U,V$ are chosen so that $U\cap\mathcal{Z},V\cap\mathcal{Z}$ are topological disks, then there exists  a point $z'\in \partial  U$ satisfying the ordering condition \eqref{eqn:hallorder} such that $f^N(z')\in \partial V$.
\end{thm}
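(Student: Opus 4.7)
The plan is to graft Hall's inductive diagonal construction from the proof of Theorem \ref{thm:diagonal} onto the entry/exit arguments used in the proof of Theorem \ref{thm:extensionbirkhoff2}. First, following the opening of that proof, I would replace $U$ and $V$ by one-sided compact topological-disk neighborhoods $U_0 \subseteq U$ of $\zeta_1 \in T_1$ and $V_0 \subseteq V$ of $\zeta_2 \in T_2$, each bounded by a simple arc in $\mathcal{Z}$ together with a subarc of the appropriate boundary torus. This reduces the problem to arranging for a forward orbit originating in $U_0$ to shadow the prescribed Aubry-Mather sets in order and eventually land in $V_0$.

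Next, I would extract from the claim established inside the proof of Theorem \ref{thm:extensionbirkhoff2} the following ``seeding'' information: applied to $U_0$ and to the lowest Aubry-Mather set $\Sigma_{\omega_1}$, it produces an integer $m_0 > 0$ and a positively tilted family of vertical curves inside $U_0$ whose $f^{m_0}$-image contains a positive diagonal in a strip $B_{w_1, \bar w_1}$ for some pair of adjacent points $w_1, \bar w_1$ in the extended orbit of a point of $\Sigma_{\omega_1}$. A time-reversed application to $V_0$ and the highest Aubry-Mather set $\Sigma_{\omega_\sigma}$ yields an integer $m_\sigma > 0$ and a negative diagonal of a strip $B_{w_\sigma, \bar w_\sigma}$ around a point of $\Sigma_{\omega_\sigma}$ whose $f^{m_\sigma}$-image lies in $V_0$. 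These play the roles of the seed positive diagonal and of the terminal negative diagonal in Hall's scheme.

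With these endpoints fixed, I would run Hall's inductive argument through the intermediate Aubry-Mather sets exactly as in the proof of Theorem \ref{thm:diagonal}: for each $s$, starting from a positive diagonal in $B_{w_s, \bar w_s}$ delivered by stage $s-1$, use the hereditary property for $n_s$ iterates to enforce the ordering \eqref{eqn:hallorder}; then use the transition step (with the Birkhoff-type stretching supplied now by Theorem \ref{thm:extensionbirkhoff2} instead of the classical Birkhoff Connecting Theorem) for an appropriate number $m_s$ of iterates to spread the image across a fundamental domain of the annulus, so that it contains a positive diagonal in $B_{w_{s+1}, \bar w_{s+1}}$ for the next Aubry-Mather set. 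This yields a nested family of negative diagonals $D_0 \supseteq D_1 \supseteq \cdots \supseteq D_\sigma$ inside the positive-diagonal piece coming from $U_0$.

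Finally, I would match the forward chain with the backward terminal datum: a positive diagonal and a negative diagonal in the same strip always intersect (as used in \cite{Hall1989} and in the Jordan-curve arguments of Theorem \ref{thm:extensionbirkhoff2}), producing a point $z \in U_0 \subseteq U$ whose orbit realizes \eqref{eqn:hallorder} and satisfies $f^N(z) \in V_0 \subseteq V$ for the total count $N = \sum_{t=1}^{\sigma} n_t + \sum_{t=0}^{\sigma} m_t$. The ``$\partial U$ to $\partial V$'' addendum follows from the closing argument of Theorem \ref{thm:extensionbirkhoff2}: $f^N(U_0)$ and $V_0$ are topological disks with common interior points but distinct boundary arcs on $T_1, T_2$, so by the Jordan Curve Theorem their boundaries must cross. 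The main obstacle is the bookkeeping: the Hall induction must simultaneously propagate the cyclical ordering \eqref{eqn:hallorder}, the upper/lower-edge pedigree of the diagonals (needed for the hereditary property to keep working), and the topological-disk structure inherited from $U_0$ and $V_0$, through transitions supplied by the more delicate Theorem \ref{thm:extensionbirkhoff2} rather than by the classical Corollary \ref{cor:birkhoff}, for which topological transitivity on the boundary tori is no longer available.
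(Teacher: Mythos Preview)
Your outline for the first conclusion (existence of $z\in U$ with $f^N(z)\in V$ satisfying \eqref{eqn:hallorder}) is essentially the paper's Part~1: reduce to one-sided disks $U_0,V_0$, use the nested-vertical-curve construction from the proof of Theorem~\ref{thm:extensionbirkhoff2} to seed a positive diagonal of some strip $B_{w_1,\bar w_1}$ from forward iterates of $U_0$, run Hall's induction through $\Sigma_{\omega_1},\ldots,\Sigma_{\omega_\sigma}$, and intersect with a negative diagonal produced from backward iterates of $V_0$. One small point you gloss over: the negative diagonal coming from $V_0$ lands in some gap $B_{w'_\sigma,\bar w'_\sigma}$ of $\Sigma_{\omega_\sigma}$ that need not coincide with the gap delivered by the forward induction, so one extra ``stretch across a fundamental interval'' step is needed before the final intersection; the paper inserts exactly this (the iterate $f^{j_\sigma+n_\sigma+m''_\sigma}(U')$).

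Your argument for the ``Moreover'' clause, however, has a genuine gap. The Jordan Curve step you invoke---$f^N(U_0)$ and $V_0$ are disks sharing interior points, hence their boundaries meet---does produce a point $z'\in\partial U_0$ with $f^N(z')\in\partial V_0$, but it gives no control over the intermediate iterates of $z'$. The ordering condition \eqref{eqn:hallorder} is only guaranteed for points that lie in the nested diagonal $D_\sigma$, and there is no reason the Jordan-curve intersection point lies there. In Theorem~\ref{thm:extensionbirkhoff2} the Jordan-curve closing worked precisely because no intermediate shadowing constraint had to be respected; here it does. The paper flags this explicitly (``This does not follow immediately from the above argument since the image of $\partial U_0$ under iteration may fail being a positively tilted curve'') and instead carries out a separate construction: it first uses Theorem~\ref{thm:extensionbirkhoff2} to make an iterate $f^{l_0}(U_0)$ stretch across a full fundamental interval, then builds a positive diagonal whose \emph{upper and lower edges are subarcs of} $f^{l_0}(\partial U_0)$, and propagates this edge-pedigree through the Hall induction so that at every stage the diagonal's edges remain on iterates of $\partial U_0$. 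The analogous backward construction from $V_0$ yields a negative diagonal with edges on iterates of $\partial V_0$. When these diagonals meet, the edge-on-edge intersection gives the required $z'\in\partial U_0$ with $f^N(z')\in\partial V_0$ \emph{and} satisfying \eqref{eqn:hallorder}. Your closing shortcut skips this edge-tracking and so does not establish the boundary statement.
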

\begin{proof}

We use the construction of diagonal sets  described in the sketch of the proof of Theorem \ref{thm:diagonal}; for details see  \cite{Hall1989}.

\textit{Part 1.}
As in the proof of Theorem \ref{thm:extensionbirkhoff2} we can choose one sided compact neighborhoods $U_0$ of $\zeta_1\in T_1$ and  $V_0$ of $\zeta_2\in T_2$, such that $U_0\cap \mathcal{Z}$ and $V_0\cap \mathcal{Z}$  are topological disks.

We  first prove the existence of  a point $z\in U_0$ satisfying \eqref{eqn:hallorder} and such that $f^N(z)\in V_0$.
Let  $C_{\omega_1}$ be the essential circle containing $\Sigma_{\omega_1}$. We choose an Aubry-Mather set $\Sigma_{\rho_1}$ lying on some essential circle $C_{\rho_1}$, such that $C_{\omega_1}\prec C_{\rho_1}$. We choose a point $p_1\in\Sigma_{\rho_1}$ and a small neighborhood $W(p_1)$ of $p_1$ which does not intersect $\Sigma_{\omega_1}$.  We assume that the interior of $U_0$  does not meet  $\Sigma_{\omega_1}$ and $\Sigma_{\rho_1}$, otherwise the proof follows as in \cite{Hall1989}. By Lemma \ref{lem:kaloshin} (iii) the closure of $\bigcup _{j=0}^{\infty}f^{-j}(W(p_1))$ contains $T_1$, and in particular $\zeta_1$. Proceeding as in the proof of Theorem \ref{thm:extensionbirkhoff2}, we obtain a nested sequence $U_1\supseteq U_2\supseteq \ldots \supseteq U_i$  of one-sided neighborhoods of points in $T_1$,  all contained in $U_0$,  and two sequences of positive integers $j_1<j_2<\ldots<j_i<\ldots$  and $h_1<h_2<\ldots<h_i<\ldots$ with the following properties:
\begin{itemize}
\item [(i)] each set  $U_i$ is a topological rectangle consisting of vertical curves starting from $T_1$, bounded on the left-side by a vertical curve $\gamma_i$ and on the right by a vertical curve $\gamma'_i$;
\item [(ii)] $f^{j_i}(U_i)\cap W^{h_i}\neq \emptyset$, where $W^{h_i}:=W(p_1)+(h_i,0)$;
\item [(iii)] the image of each vertical curve in $U_i$ under $f^{j_i}$ crosses $C_{\omega_1}$ with positive oriented intersection number through a gap in $\Sigma_{\omega_1}$ of endpoints $a^i_{\omega_1}$ and $b^i_{\omega_1}$; the gap is chosen to be the first gap that is crossed  over with positive oriented intersection number;
\item[(iv)] the  endpoints of the gap between $a^i_{\omega_1}$ and $b^i_{\omega_1}$ are  the images under $f^{j_i-j_{i-1}}$ of either the  endpoints of the gap between $a^{i-1}_{\omega_1}$ and  $b^{i-1}_{\omega_1}$, or of a gap in  $\Sigma_{\omega_1}$ located to the right  side of that gap.
\end{itemize}

Since the rotation number of $\Sigma_{\omega_1}$ is smaller than the rotation number  of $\Sigma_{\rho_1}$,  any pair of points chosen on these two sets  shift apart from one another under positive iterations. Therefore there exists some $i$ large enough so that the gap  of endpoints $a^i_{\omega_1}$ and $b^i_{\omega_1}$ is on the left side of    $W^{h_i}$, in the sense that  $\pi_x(b^i_{\omega_1})<\pi_x(z)$ for all $z\in W^{h_i}$.

We claim that, by choosing $i$ large enough and $\gamma'_i$ sufficiently close to $\gamma_i$, we can ensure that the set $f^{j_i}(U_i)$ has a part which is a positive diagonal set in $B_{a^{i}_{\omega_ 1},b^{i}_{\omega_1}}$. Now we justify the claim. The image of the left-side $\gamma_i$ of $U_i$ is mapped by $f^{j_i}$ onto a positively tilted curve that crosses the gap between   $a^i_{\omega_1}$ and $b^i_{\omega_1}$ with positive oriented intersection number. Cutting the curve $f^{j_1}(\gamma_1)$ with the vertical strip $B_{a^i_{\omega_1},b^i_{\omega_1}}$ yields at least one component that connects $I_{a^i_{\omega_1}}=\{x=a^i_{\omega_1}\}$ to $I_{b^i_{\omega_1}}=\{x=b^i_{\omega_1}\}$ with positive oriented intersection number. Take the first such a component and follow it in the direction of the increase of the parameter $t$. The   intersection of this component with
 $I_{a^i_{\omega_1}}$ needs to occur  at a point $r_1$  below $a^i_{\omega_1}$, i.e.  $r_1\in I^{-}_{a^i_{\omega_1}}$,  otherwise this component does not come from below $C_{\omega_1}$. Following this component forward starting from $r_1$, the curve cannot intersect $I_{a^i_{\omega_1}}$ above $a^i_{\omega_1}$ as this would violate the positive tilt condition, or the choice of the gap between   $a^i_{\omega_1}$ and $b^i_{\omega_1}$ being the first gap that is crossed with positive oriented intersection number. Following the component starting from $r_1$, it must first meet    $I_{b^i_{\omega_1}}$  at a point $r'_1$  above $b^i_{\omega_1}$, i.e.   $r'_1\in I^{+}_{b^i_{\omega_1}}$; otherwise this component  does not have positive oriented intersection number   with the gap between $a^i_{\omega_1}$ and $b^i_{\omega_1}$.
Therefore, the component of $f^{j_i}(\gamma_i)$  between $r_1$ and $r'_1$   goes from $I^-_{a^i_{\omega_1}}$ to  $I^+_{b^i_{\omega_1}}$  without intersecting again $I^+_{a^i_{\omega_1}}$ or $I^-_{b^i_{\omega_1}}$. Now taking a curve $\gamma'_i$ sufficiently close to $\gamma_i$  results in a set $U_i$ with the property that $f^{j_i}(U_i)$ has a part which is a positive diagonal set in $B_{a^{i}_{\omega_ 1},b^{i}_{\omega_1}}$.

We change notation at this point:  we denote $m_0:=j_i$, $w_1:=a^{i}_{\omega_1}$, and $\bar w_1:=b^{i}_{\omega_1}$, $U'=U_i$ for $i$  fixed  as above. Thus, the points $w_1$ and $\bar w_1$ are the endpoints of a gap in $\Sigma_{\omega_1}$, and  $f^{m_0}(U')$ has a part which is  a positive diagonal  in $B_{w_1,\bar w_1}$. The positive integer $m_0$ is the first term of the sequence $\{m_s\}_{s=0,\ldots,\sigma}$ in the statement of the theorem.  Note that   $U'$ consists of a union of vertical segments emerging from $T_1$. See Figure~\ref{diagonal}.

\begin{figure} \centering
\includegraphics*[width=0.6\textwidth, clip, keepaspectratio]
{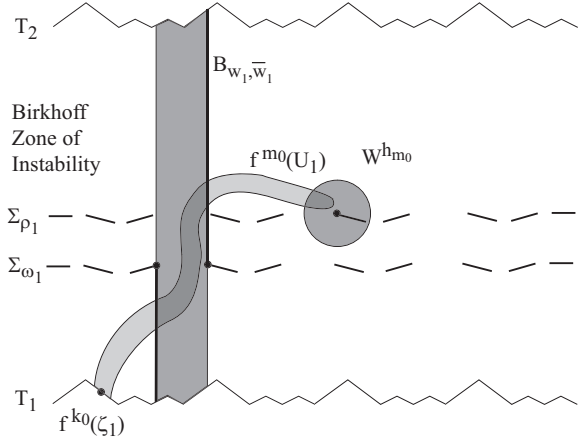}
    \caption[]{A  positive diagonal set.}
    \label{diagonal}
\end{figure}

Using the construction described in the sketch of the proof of Theorem \ref{thm:diagonal}, we obtain  a nested sequence $D_0\supseteq D_1\supseteq \ldots \supseteq D_\sigma$ of negative diagonals of $B_{w_1,\bar w_1}$ and a  sequence of  positive integers   $\{m_s\}_{s=0,\ldots,\sigma-1}$  such that for each $s\in\{1,\ldots,\sigma\}$ and each $z \in D_s$ we have
\begin{equation}\label{eqn:order}\pi_x(f^j(w_s))<\pi_x(f^j(z))<\pi_x(f^j(\bar w_s)) \textrm{ for } j_s\leq j\leq j_s+n_s,\end{equation}
where $j_s:=\sum_{t=1}^{s}n_t+\sum_{t=0}^{s-1}m_t$,  $w_s$ and $\bar w_s$ are the endpoints of some gap in the Aubry-Mather set $\Sigma_{\omega_s}$, and $f^{(j_s+n_s)}(D_s)$ is a positive diagonal in $B_{f^{n_s}(w_s),f^{n_s}(\bar w_s)}$.
In particular, $f^{(j_\sigma+n_\sigma)}(D_\sigma)$ is a positive diagonal in $B_{f^{n_\sigma}(w_\sigma),f^{n_\sigma}(\bar w_\sigma)}$, where  $w_\sigma$ and $\bar w_\sigma$ are the endpoints of some gap in the Aubry-Mather set $\Sigma_{\omega_\sigma}$.

Since $f^{m_0}(U')$ has a part which is  a positive diagonal in $B_{w_1,\bar w_1}$ and $D_{\sigma}$ is a negative diagonal  of  $B_{w_1,\bar w_1}$, then $f^{m_0}(U')$ and $D_{\sigma}$ have a non-empty intersection. Also, $f^{(j_\sigma+n_\sigma)}(U')$ has a part which is  a positive diagonal in $B_{f^{n_\sigma}(w_\sigma),f^{n_\sigma}(\bar w_\sigma)}$ that stretches across $f^{(j_\sigma+n_\sigma)}(D_\sigma)$.

Now we start with the one-sided  neighborhood $V_0$ of $\zeta_2\in T_2$. Let  $C_{\omega_\sigma}$ be an essential circle  containing $\Sigma_{\omega_\sigma}$. We choose
an Aubry-Mather set $\Sigma_{\rho_2}$ lying on an essential circle $C_{\rho_2}$, such that $C_{\rho_2}$ is below $C_{\omega_\sigma}$. We assume that the interior of $V_0$  does not meet  $\Sigma_{\omega_\sigma}$ and $\Sigma_{\rho_2}$, otherwise the proof follows as in \cite{Hall1989}.
Using  Lemma \ref{lem:kaloshin} (iii) and following the procedure described above for negative iterations, we produce a one-sided neighborhood $V'$ of a point in $T_2$, with $V'\subseteq V_0$, and a positive integer $m'_\sigma$ such that  $f^{-m'_\sigma}(V')$ contains a part which is a negative diagonal in $B_{w'_\sigma,\bar w'_\sigma}$, where   $w'_\sigma$ and $\bar w'_\sigma$  are the endpoints of a gap in $\Sigma_{\omega_\sigma}$.

By using the existence of orbits passing from near $T_2$ to near  $T_1$, and of orbits passing from  near $T_1$ to near $T_2$, as  in the sketch of the proof of Theorem \ref{thm:diagonal}, we can further iterate the positive diagonal $f^{(j_\sigma+n_\sigma)}(U')$ from above, so that we obtain an iterate $f^{(j_\sigma+n_\sigma+m''_\sigma)}(U')$  which contains a component  that stretches all the way across a fundamental interval of the annulus. In particular, $f^{(j_\sigma+n_\sigma+m''_\sigma)}(U')$ contains a component that  is a positive diagonal of $B_{w'_\sigma,\bar w'_\sigma}$. Since $f^{-m'_\sigma}(V')$ is a negative diagonal in $B_{w'_\sigma,\bar w'_\sigma}$, then $f^{(j_\sigma+n_\sigma+m''_\sigma)}(U')$ has a nonempty intersection with $f^{-m'_\sigma}(V')$. Equivalently, $f^{(j_\sigma+n_\sigma+m_\sigma)}(U')$ has a non-empty intersection with $V'$, where $m_\sigma:=m'_\sigma+m''_\sigma$.

Thus, each point   $z \in  U'\cap f^{-(j_\sigma+n_\sigma+m_\sigma)}(V')$  goes from the neighborhood $U_0$ of $\zeta_1$ to the neighborhood $V_0$ of $\zeta_2$ and it shadows, in the sense of the ordering,  each of the Aubry-Mather set $\Sigma_{\omega_s}$, $s=1,\ldots,\sigma$, along the way.

\textit{Part 2.}  Now we explain how to modify the above proof to show that there exists a point $z'\in \partial  U_0$ that satisfies \eqref{eqn:hallorder} and $f^{N}(z')\in \partial V_0$. This does not follow immediately from the above argument since the image of  $\partial U_0$ under iteration may fail being a positively tilted curve; hence  we cannot infer that $f^{m_0}(\partial U_0)$  intersects the negative diagonal set $D_\sigma$ from above.

By Theorem \ref{thm:extensionbirkhoff2}, there exist $l>0$ and a point $q\in U_0$ depending on $l$ such that $f^l(q)$ is in some prescribed neighborhood of a point $r\in T_2$, where $l$ can be chosen arbitrarily large.
Since the points of $T_1$ and $T_2$ have different rotation numbers hence move apart under iteration, there exists $l_0$ sufficiently large such that $f^{l_0}(T_1\cap U_0)$ and $f^{l_0}(q)$ are separated by a fundamental interval of the annulus, i.e., $\pi_x(f^{l_0}(r))-\pi_x(f^{l_0}(q))>1$ for all $r\in T_1\cap U_0$. Let $x_0,\bar x_0\in\mathbb{R}$ be such that $\pi_x(f^{l_0}(r))<x_0<\bar x_0 <\pi_x(f^{l_0}(q))$ and $1<\bar x_0-x_0$. Let $w_0$ be a point on the vertical line $\{x=x_0\}$ whose $y$-coordinate is larger than that of any point in $f^{l_0}(U_0)\cap  \{x={x_0}\}$. Similarly, let  $\bar w_0$ be a point on the vertical line $\{x=\bar x_0\}$ whose $y$-coordinate is smaller than that of any point in $f^{l_0}(U_0)\cap  \{x=\bar x_0\}$. Then $f^{l_0}(U_0)\cap \textrm{cl}(B_{w_0, \bar w_0})$ has a component that is a  positive diagonal in $B_{w_0, \bar w_0}$.

Let $\Sigma_{\omega_1}$ be the first   set in the prescribed sequence of Aubry-Mather sets.
Now we want to show  that, by slightly adjusting the vertical strip $B_{w_0, \bar w_0}$ to a new vertical strip $B_{z_0, \bar z_0}$,   there exists a diagonal set in $f^{l_0}(U_0)\cap B_{z_0, \bar z_0}$ such that a sufficiently large     iterate of this diagonal set has a component that is a positive diagonal in $B_{w_1,\bar w_1}$, for some points $w_1,\bar w_1\in \Sigma_{\omega_1}$.

There exists $x'_0$ sufficiently close to $x_0$ such that for all $x''_0$ between $x_0$ and $x'_0$, the point $(x''_0, \pi_y(w_0))$ has the $y$-coordinate larger than that of any point in $f^{l_0}(\textrm{cl}(U_0))\cap  \{x=x''_0\}$. Then the set  $W_0=\{(x''_0,y''_0)\,|\, x_0<x''_0<x'_0, \pi_y(w_0)<y''_0\}$  is a neighborhood of an arc  in $T_2$, with the property that each point $(x''_0,y''_0)\in W_0$ has the $y$-coordinate larger than that of any point in $f^{l_0}(\textrm{cl}(U_0))\cap \{x=x''_0\}$. Similarly, there is $\bar x'_0$ sufficiently close to $\bar x_0$ such that for all $\bar x''_0$ between $\bar x_0$ and $\bar x'_0$, the point $(\bar x''_0, \pi_y(\bar w_0))$ has the $y$-coordinate smaller than that of any point in $f^{l_0}(\textrm{cl}(U_0))\cap  \{x=\bar x''_0\}$. Then the set  $\bar W_0=\{(\bar x''_0,\bar y''_0)\,|\, \bar x_0<\bar x''_0<\bar x'_0, \pi_y(\bar w_0)>\bar y''_0\}$  is a neighborhood of an arc in $T_1$, with the property that each point $(\bar x''_0,\bar y''_0)\in \bar W_0$ has the $y$-coordinate smaller than that of any point in $f^{l_0}(\textrm{cl}(U_0))\cap \{x=\bar x''_0\}$.

Let $\Sigma_{\rho_1}$ be an Aubry-Mather set lying on an essential circle $C_{\rho_1}$ that is below the essential circle $C_{\omega_1}$ containing $\Sigma_{\omega_1}$,  let $p_1\in C_{\rho_1}$, and let $W(p_1)$ be a small neighborhood of $p_1$ that does not intersect $\Sigma_{\omega_1}$. Then there exists $z_0\in W_0$ and ${j_0}$ sufficiently large such that $f^{j_0}(z_0)\in W(p_1)$. Since the curve $f^{j_0}(I^+_{z_0})$ is a positively tilted curve emerging from $T_2$, the arguments  used in Part 1 show  that there is a gap of the Aubry-Mather set $\Sigma_{\omega_1}$, between a pair of points $w_1,w'_1\in \Sigma_{\omega_1}$, such that $f^{j_0}(I^+_{z_0})$  crosses this gap with negative intersection number (where the parametrization of $f^{j_0}(I^+_{z_0})$ is chosen so that to $t=0$ it corresponds a point on $T_2$) and has its first intersection with $I_{w_1}$ below the point $w_1$, provided ${j_0}$ is chosen large enough.
This implies that the image of the diagonal component of $f^{l_0}(\textrm{cl}(U_0))\cap B_{z_0,\bar w_0}$ under $f^{j_0}$ has a component $\Delta_0$ in $B_{w_1,w'_1}$  that satisfies the positive diagonal set conditions relative to its left side. See Fig. \ref{fig:part2}.

\begin{figure} \centering
\includegraphics*[width=0.6\textwidth, clip, keepaspectratio]
{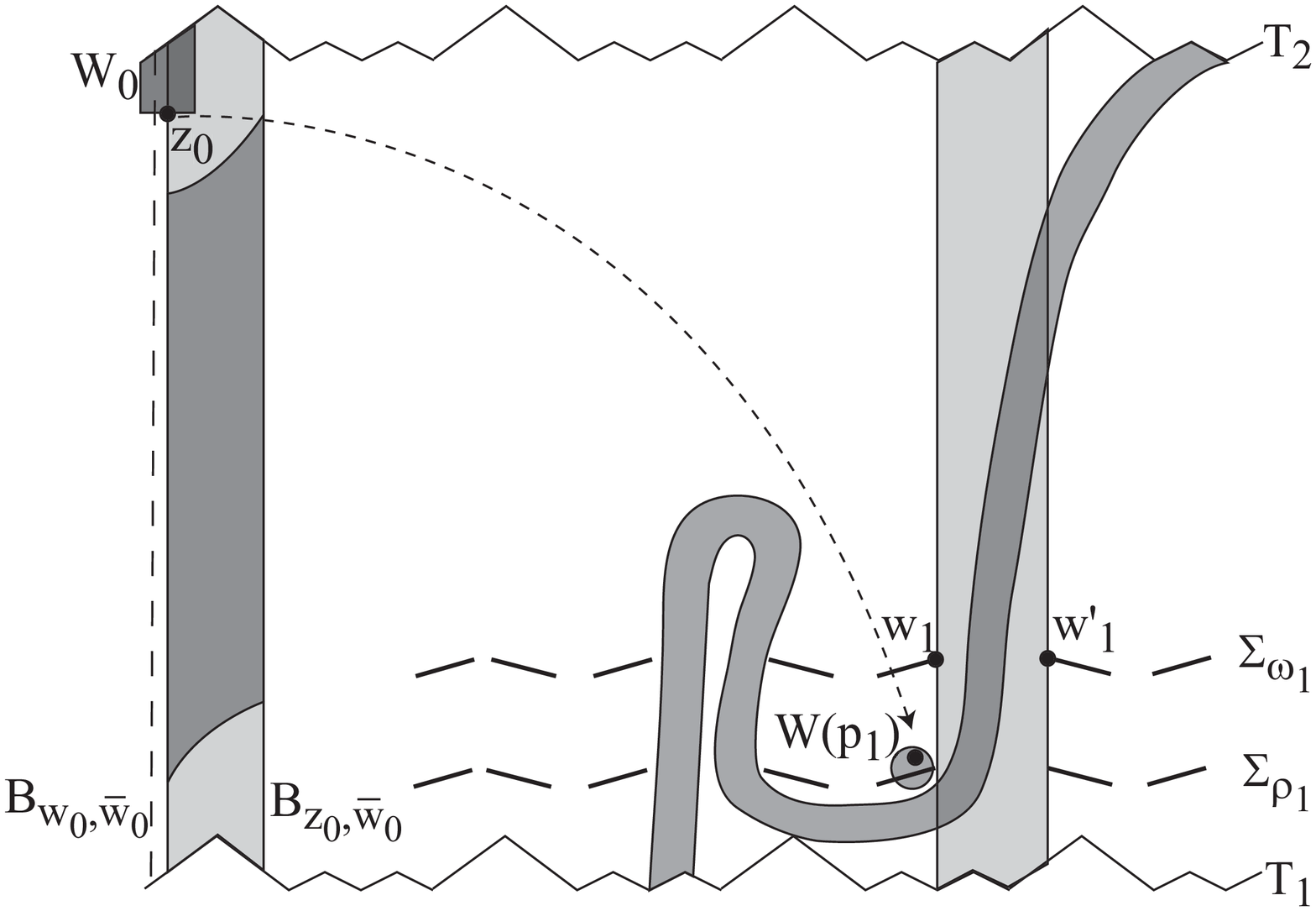}
    \caption[]{Construction of a positive diagonal set.}
    \label{fig:part2}
\end{figure}

Now, for all $j'>0$, the image of $\Delta_0$ under $f^{j'}$  also has a component  that satisfies the positive diagonal set conditions relative to the left side of $B_{f^{j'}(w_1),f^{j'}(w'_1)}$. (This follows from the hereditary property of diagonal sets, see Section \ref{section:aubrymathersets}.)
In a similar fashion, there exist a point $\bar z_0\in \bar W_0$ and  $j'_0$ sufficiently large such that  the  positively tilted curve  $f^{j'_0+j_0}(I^-_{\bar z_0})$, emerging from $T_1$,   crosses a gap of the Aubry-Mather set $\Sigma_{\omega_1}$  between a pair of points $\bar w_1,\bar w'_1\in \Sigma_{\omega_1}$,   the oriented intersection number between  $f^{j'_0+j_0}(I^-_{\bar z_0})$  and this gap is positive,  and the first intersection with $I_{\bar w'_1}$ occurs above the point $\bar w'_1$.
Thus, the image  of $\Delta_0$  under $f^{j'_0}$ has a component that satisfies the positive diagonal set conditions relative to its right side.

In summary, the image  of the diagonal set component of $f^{l_0}(\textrm{cl}(U_0))\cap B_{z_0, \bar z_0}$ under $f^{j_0+j'_0}$ contains a component $\Delta'_0$ that is a positive diagonal in $B_{w_1,\bar w'_1}$, where $w_1,\bar w'_1$ are two points in the Aubry-Mather set $\Sigma_{\omega_1}$. Moreover, the upper edge and the lower edge of  $\Delta'_0$ are contained in $f^{j'_0+j_0+l_0}(\partial  U_0)$.

We apply an analogous argument at the other boundary torus $T_2$. Given $V_0$ a neighborhood of a point $\zeta _2\in T_2$, there exist  $L>0$ and a pair of points $w_\sigma,\bar w_\sigma\in \Sigma_{\omega_\sigma}$ such that $f^{-L}(\textrm{cl}(V_0))\cap B_{w_\sigma,\bar w_\sigma}$ has a component $D''_\sigma$ that is a negative diagonal in $B_{w_\sigma,\bar w_\sigma}$. The upper edge and the lower edge  of   this positive diagonal set are contained in $f^{-L}(\partial  V_0)$.

Now we apply the argument from Part 1. There exists a negative diagonal set $D_\sigma$  in $B_{w_1,\bar w_1}$ such that all points $z\in D_\sigma$  satisfy \eqref{eqn:hallorder}. By the argument for Theorem \ref{thm:diagonal}, the upper and lower edge of $D_{\sigma}$ lie on $f^{-j_\sigma}(I^-_{w_\sigma}),f^{-j_\sigma}(I^+_{\bar w_\sigma})$ where $j_\sigma=\sum_{s=0}^{\sigma}n_s+\sum_{s=0}^{\sigma-1}m_s$.
Since negative and positive diagonal sets in the same vertical strip always intersect, the negative diagonal set $D_\sigma$ intersects $\Delta'_0$, and in particular it intersects its upper and lower edges that are contained in
$f^{j'_0+j_0+l_0}(\partial  U_0)$. Iterating $\Delta'_0$ forward for $j_\sigma$ times yields a positive diagonal set $D'_\sigma$ in $B_{w_\sigma,\bar w_\sigma}$. The upper and lower edges of $D'_\sigma$ are contained in $f^{j_\sigma+j'_0+j_0+l_0}(\partial U_0)$. The positive diagonal set $D'_\sigma$ intersects the negative diagonal component $D''_\sigma$ of $f^{-L}(\textrm{cl}(V_0))\cap B_{w_\sigma,\bar w_\sigma}$. In particular the upper and lower edges of $D'_{\sigma}$, that are contained in $f^{j_\sigma+j'_0+j_0+l_0}(\partial  U_0)$, intersect the upper and lower edges of $D''_\sigma$ that are contained in $f^{-L}(\partial  V_0)$.

Thus, there exists a point  $z'\in\partial  U_0$ that is taken by $f^{j_\sigma+j'_0+j_0+l_0}$ to $\partial  V_0$ and satisfies the ordering relations  \eqref{eqn:hallorder}.
\end{proof}

\section{A shadowing lemma in normally hyperbolic invariant manifolds}
\label{section:shadowing}

In this section we present a shadowing lemma-type of result
saying that, given a sequence of windows within a normally hyperbolic
invariant manifold, consisting of pairs of windows correctly aligned under
the scattering  map,   alternating with pairs of windows correctly aligned under some iterate of the inner map,
then there exists a true
orbit in the full space dynamics that follows these windows.
This result reduces the construction of windows within the full dimensional phase
space to  the construction of lower dimensional windows  within the normally hyperbolic invariant manifold.

For this section, we assume a diffeomorphism $f:M\to M$ on a manifold $M$, and an  $l$-dimensional normally hyperbolic invariant manifold $\Lambda\subseteq M$ as in Subsection~\ref{section:scattering}.

In the subsequent sections,  we will apply Lemma \ref{lem:shadowing2} only in the case when $\Lambda$ is a $2$-dimensional normally hyperbolic invariant manifold in $M$, i.e., $l=2$. A more general version of this lemma which does not involve the scattering map, and some additional details and applications,  appear in \cite{DelshamsGR11}.

\begin{lem}\label{lem:shadowing2} Let $\{R_i,R'_i\}_{i\in\mathbb{Z}}$ be a bi-infinite
sequence of $l$-dimensional windows contained in  $\Lambda$. Assume that
the following properties hold for all $i\in\mathbb{Z}$:
\begin{itemize}
\item[(i)] $R_{i}\subseteq U^-$ and $R'_{i}\subseteq
U^+$.
\item[(ii)] $R_{i}$ is
correctly aligned with $R'_{i+1}$ under the scattering map
$S$.\item[(iii)] for  each pair $R'_{i+1},R_{i+1}$ and for each $L>0$ there exists $L'>L$  such that   $R'_{i+1}$ is correctly aligned
with   $R_{i+1}$ under the iterate $f_{\mid\Lambda}^{L'}$ of the restriction $f_{\mid\Lambda}$ of $f$ to $\Lambda$.
\end{itemize}
Fix any bi-infinite sequence of positive real numbers  $\{\eps_i\}_{i\in\mathbb{Z}}$.
Then there exist an orbit $(f^{n}(z))_{n\in\mathbb{Z}}$
of some point $z\in M$, an increasing sequence of integers
$(n_i)_{i\in\mathbb{Z}}$, and some sequences of positive integers $\{N_i\}_{i\in\mathbb{Z}}, \{K_i\}_{i\in\mathbb{Z}},
\{M_i\}_{i\in\mathbb{Z}}$,  such that, for all $i\in\mathbb{Z}$:
\[ \begin{split}
d(f^{n_i}(z),\Gamma)<\eps_i,\\
d(f^{n_i+N_{i+1}}(z), f_{\mid\Lambda}^{N_{i+1}}(R'_{i+1}))<\eps_{i+1},
\\d(f^{n_{i}-M_{i}}(z), f_{\mid\Lambda}^{-M_{i}}(R_{i}))<\eps_{i},\\
n_{i+1}=n_i+N_{i+1}+K_{i+1}+M_{i+1}.
\end{split}\]
\end{lem}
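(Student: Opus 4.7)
The plan is to reduce the assertion to the topological shadowing lemma for correctly aligned windows, Theorem \ref{thm:detorb}, by constructing a bi-infinite chain of full-dimensional windows in $M$ that alternates between windows associated to the homoclinic excursions through $\Gamma$ and windows associated to the inner motion along $\Lambda$. The key technical ingredient is the product property of correct alignment from \cite{GideaL06}, combined with the hyperbolic contraction and expansion on the fibers of $W^s(\Lambda)$ and $W^u(\Lambda)$.

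\textbf{Transition windows near $\Gamma$.} For each $i$, the correct alignment of $R_i$ with $R'_{i+1}$ under $S=\Omega^+_\Gamma \circ (\Omega^-_\Gamma)^{-1}$ implies that the set $\Gamma_i := (\Omega^-_\Gamma)^{-1}(R_i) \cap (\Omega^+_\Gamma)^{-1}(R'_{i+1})$ is a non-empty $l$-dimensional window in $\Gamma$. I would thicken $\Gamma_i$ by small boxes $B^u_i \subseteq W^u_{\mathrm{loc}}$ and $B^s_i \subseteq W^s_{\mathrm{loc}}$ in the hyperbolic fibers to obtain a full-dimensional product window $\tilde \Gamma_i \subseteq M$. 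Iterating $\tilde \Gamma_i$ forward by $N_{i+1}$ steps contracts its stable component at rate $\lambda^{N_{i+1}}$ and produces a thin window $\tilde R'_{i+1}$ contained in an $\eps_{i+1}$-neighborhood of $f_{\mid\Lambda}^{N_{i+1}}(R'_{i+1})$; iterating $\tilde \Gamma_i$ backward by $M_i$ steps contracts its unstable component and produces a thin window $\tilde R_i$ contained in an $\eps_i$-neighborhood of $f_{\mid\Lambda}^{-M_i}(R_i)$. Choosing $N_{i+1},M_i$ large and $B^{u,s}_i$ appropriately sized, and invoking the product property together with the transversality $T_x M = T_x W^u(\Lambda) + T_x W^s(\Lambda)$ along $\Gamma$, one verifies that $\tilde R_i$ is correctly aligned with $\tilde R'_{i+1}$ under $f^{M_i + K_{i+1} + N_{i+1}}$, where $K_{i+1}$ is the transit time through a small neighborhood of $\Gamma_i$; the alignment along the center is inherited from the $S$-alignment of $R_i$ with $R'_{i+1}$.

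\textbf{Inner alignments.} For the correct alignment of $\tilde R'_{i+1}$ with $\tilde R_{i+1}$ along $\Lambda$, I would use hypothesis (iii) to pick $L'$ so large that $R'_{i+1}$ is correctly aligned with $R_{i+1}$ under $f_{\mid\Lambda}^{L'}$ and, simultaneously, $\lambda^{L'}$ and $\mu^{-L'}$ are small enough to accommodate the hyperbolic scales: the $L'$-step stable contraction must bury the stable component of $\tilde R'_{i+1}$ strictly inside the stable box of $\tilde R_{i+1}$, and the $L'$-step unstable preimage of the unstable box of $\tilde R_{i+1}$ must fit strictly inside the unstable box of $\tilde R'_{i+1}$. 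The product property then upgrades the center-direction alignment in $\Lambda$ to correct alignment of the full-dimensional $\tilde R'_{i+1}$ with $\tilde R_{i+1}$ under $f^{L'}$. Chaining the transition alignments with the inner alignments gives a bi-infinite sequence of correctly aligned full-dimensional windows
\[
\ldots \to \tilde R_i \to \tilde R'_{i+1} \to \tilde R_{i+1} \to \tilde R'_{i+2} \to \ldots
\]
in $M$, to which Theorem \ref{thm:detorb} applies and produces the required point $z$. Setting $n_{i+1} := n_i + N_{i+1} + K_{i+1} + M_{i+1}$ (absorbing the inner iterate $L'$ into the increments associated with the $(i+1)$-th transition) recovers the integer sequence and the distance bounds in the statement.

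\textbf{Main obstacle.} The delicate point is the scale-matching in the inner alignment: the hyperbolic size parameters of $\tilde R'_{i+1}$ and $\tilde R_{i+1}$ are determined independently by the choices made for $\tilde \Gamma_i$ and $\tilde \Gamma_{i+1}$, so they bear no a priori quantitative relationship to one another. It is precisely the flexibility in hypothesis (iii) — the ability to take $L'$ arbitrarily large — that lets the uniform hyperbolic rates $\lambda^{L'}, \mu^{-L'}$ absorb these mismatches and produce the correct alignment. Once this is in place, the remainder of the proof is a bookkeeping of iterate counts and a direct appeal to Theorem \ref{thm:detorb}, with the $\eps_i$-closeness conditions following from the construction of $\tilde R_i, \tilde R'_i$ as iterates of shrinking neighborhoods of $\Gamma$.
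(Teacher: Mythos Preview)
Your overall architecture matches the paper's: thicken the $l$-dimensional windows coming from $\Lambda$ and $\Gamma$ to full-dimensional product windows in $M$, use the hyperbolic contraction/expansion on the fibers together with the product property of correct alignment, invoke hypothesis~(iii) with $L'$ large for the inner step, and conclude via Theorem~\ref{thm:detorb}. However, two specific choices in your execution create a genuine gap.

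First, working with the single intersection $\Gamma_i = (\Omega^-_\Gamma)^{-1}(R_i)\cap(\Omega^+_\Gamma)^{-1}(R'_{i+1})$ and then defining $\tilde R_i,\tilde R'_{i+1}$ as literal iterates of its thickening $\tilde\Gamma_i$ undermines the inner alignment. The center component of $\tilde R'_{i+1}=f^{N_{i+1}}(\tilde\Gamma_i)$ is then (a forward iterate of) the proper sub-window $\Omega^+_\Gamma(\Gamma_i)\subsetneq R'_{i+1}$, not the full $R'_{i+1}$; hypothesis~(iii) only gives alignment of the full $R'_{i+1}$ with $R_{i+1}$, and correct alignment is not inherited by arbitrary sub-windows, so the product property does not yield the claimed alignment of $\tilde R'_{i+1}$ with $\tilde R_{i+1}$. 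The paper avoids this by keeping two separate lifts $\bar R_i=(\Omega^-_\Gamma)^{-1}(R_i)$ and $\bar R'_{i+1}=(\Omega^+_\Gamma)^{-1}(R'_{i+1})$ in $\Gamma$, thickening each to $\bar W_i,\bar W'_{i+1}$ (correctly aligned under the identity), and then constructing \emph{new} product windows $W_i,W'_{i+1}$ near $\Lambda$ via the Pugh--Shub local linearization of $E^u\oplus E^s$, with center components exactly $f^{-M_i}(R_i)$ and $f^{N_{i+1}}(R'_{i+1})$. The $\lambda$-lemma is then used to check $W_i\to\bar W_i$ under $f^{M_i}$ and $\bar W'_{i+1}\to W'_{i+1}$ under $f^{N_{i+1}}$, after which hypothesis~(iii) applies directly to the full center windows.

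Second, your integer bookkeeping is off: there is no ``transit time $K_{i+1}$ through a neighborhood of $\Gamma_i$''---the orbit meets $\Gamma$ at the single instant $n_i$, where $\bar W_i$ and $\bar W'_{i+1}$ are aligned under the identity. In the paper's accounting $K_{i+1}$ \emph{is} the inner-dynamics time, namely $K_{i+1}:=L'-N_{i+1}-M_{i+1}$ where $L'$ comes from hypothesis~(iii); this is what makes $n_{i+1}=n_i+N_{i+1}+K_{i+1}+M_{i+1}$ hold.
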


\begin{proof}
The idea of this proof is to `thicken' some appropriate iterates of the windows $R_{i},R'_{i}$ in
$\Lambda$ to  full dimensional windows $W_i,W'_i$ in $M$, so that
$\{W_i,W'_i\}_{i\in\mathbb{Z}}$ form a sequence of windows  that  are correctly aligned under some appropriate maps.
We start with a brief sketch of  the construction before we proceed to the formal proof.
We constructs some copies  $\bar R_{i},\bar R'_{i+1}$ in $\Gamma$ of $R_{i},R'_{i+1}$, respectively,  through the inverses of the wave maps (see Section \ref{section:scattering}).  We then expand the rectangles $\bar R_{i},\bar R'_{i+1}$ into the hyperbolic directions to produce a pair of  windows $\bar W_{i},\bar W'_{i+1}$, respectively, which are correctly aligned under the identity map. Then we take a backwards  iterate  of $\bar W_{i}$ such that $f^{-M_{i}} (\bar W_{i})$ is sufficiently close to $\Lambda$, and  we   construct a  new window $W_i$ about $f^{-M_i}(R_i)$ such that $W_i$ is correctly aligned with $\bar W_i$ under $f^{M_i}$. Similarly, we construct a   window $W'_{i+1}$ about $f^{N_i}(R'_{i+1})$ such that $\bar W'_{i+1}$ is correctly aligned with $W'_{i+1}$ under $f^{N_{i+1}}$. We are given that we can align $R'_{i+1}$ with $R_{i+1}$ under some high enough iterate. Hence we can align $W'_{i+1}$ with $W_{i+1}$ under some iterate $f^{K_{i+1}}$. This construction can be continued inductively.

Notationwise, the $N_i$'s are associated to   forward iterations along the stable manifold, the $M_i$'s to   backwards iterations along the unstable manifold, and the $K_i$'s to iterations following the inner dynamics  of $f$ restricted to $\Lambda$. See Fig. \ref{shadowing}.

\begin{figure} \centering
\includegraphics*[width=0.8\textwidth, clip, keepaspectratio]
{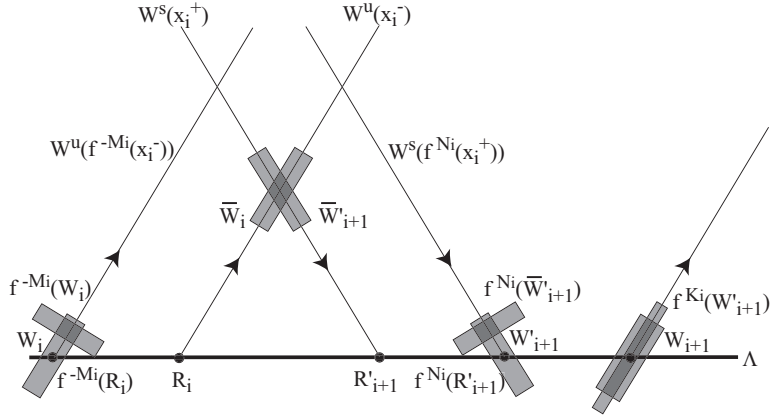}
    \caption[]{Schematic illustration of the construction of windows for the shadowing lemma. The windows $R_i,R'_{i+1}$ are depicted as large dots.}
    \label{shadowing}
\end{figure}

\textit{Step 1.} Let $\{R,R'\}$ be a pair of $l$-dimensional windows of the type $\{R_{i},R'_{i+1}\}$, and let $\{\eps,\eps'\}$ stand  for the corresponding $\{\eps_i,\eps_{i+1}\}$.
Let $\bar R =(\Omega^-_\Gamma)^{-1}(R )$ and
$\bar R' =(\Omega^+_\Gamma)^{-1}(R' )$ be the copies of $R $ and
$R' $, respectively, in the homoclinic channel $\Gamma$.  By
making some arbitrarily small changes in the sizes of their exit and entry directions, we
can alter the windows $\bar R $ and $\bar R' $ such that $R $ is correctly aligned with $\bar R $
under $(\Omega^-_\Gamma)^{-1}$, $\bar R$ is correctly aligned with $\bar R' $ under the identity mapping, and
$\bar R' $ is correctly aligned with $R $ under
$\Omega^+_\Gamma$.

We `thicken' the $l$-dimensional windows $\bar R $ and $\bar R' $
in $\Gamma$, which are correctly aligned under the identity mapping,
to $(l+n_u+n_s)$-dimensional windows $\bar W $ and $\bar W' $, respectively, that are correctly aligned in $M$ under the
identity mapping as well. We now explain the `thickening' procedure.

First, we describe how to thicken $\bar R $ to a full dimensional window $\bar W $.
We choose some $0<\bar\delta <\eps $ and $0<\bar\eta <\eps $. At each
point $x \in \bar R $ we choose an $n_u$-dimensional closed ball $\bar
B_{\bar\delta }(x )$ of radius $\bar\delta $ centered at $x $ and
contained in $W^u({x^- })$, where $x^- =\Omega^-_\Gamma(x )$. Let $\bar\Delta :=\bigcup_{x \in \bar R }\bar
B^{u}_{\bar\delta }(x )$. Note that $\bar\Delta $ is contained in
$W^u(\Lambda)$ and is homeomorphic to a $(l+n_u)$-dimensional
rectangle.  We define the exit set and the entry set of this
rectangle as follows:
\[\begin{split}(\bar\Delta )^{\rm exit}:=\bigcup_{x \in (\bar R )^{\rm exit}}
 \bar B^{u}_{\bar\delta }(x ) \cup
\bigcup_{x \in \bar R }\partial \bar B^{u}_{\bar\delta }(x ),\\
(\bar\Delta )^{\rm entry}:=\bigcup_{x \in (\bar R )^{\rm entry}}
 \bar B^{u}_{\bar\delta }(x ).
\end{split}\]

We consider the normal bundle $N$ to $W^u(\Lambda)$. At
each point $y \in \bar\Delta$,
we choose an $n_s$-dimensional closed ball $\bar B^s_{\bar\eta }(y)$
centered at $y$ and contained in the image of  $N_y$ under the
exponential map $\exp_{y }:T_{y }M\to M$. We let
$\bar W :=\bigcup_{y \in\bar\Delta }\bar B^{s}_{\bar\eta }(y )$.
By the Tubular Neighborhood Theorem (see, e.g.,
\cite{BurnsG05}), we have that for $\bar\eta $ sufficiently
small,  $\bar W  $ is a homeomorphic copy of a
$(l+n_u+n_s)$-dimensional rectangle. We now define the exit set and the
entry set of $\bar W $ as follows:
\[\begin{split} (\bar W  )^{\rm
exit}:=\bigcup_{y \in(\bar\Delta )^{\rm
exit}}\bar B^{s}_{\bar\eta }(y ),\\
(\bar W )^{\rm entry}:=\bigcup_{y \in(\bar\Delta )^{\rm
entry}}\bar B^{s}_{\bar\eta }(y ) \cup
\bigcup_{y \in(\bar\Delta )}\partial \bar B^{s}_{\bar\eta }(y ).
\end{split}
\]

Second, we describe in a similar fashion how to thicken $\bar R' $ to a full dimensional window
$\bar W' $. We choose $0<\bar\delta' <\eps' $ and
$0<\bar\eta' <\eps'$. We consider the $(l+n_s)$-dimensional rectangle
$\bar\Delta' :=\bigcup_{x' \in \bar R' }\bar
B^{s}_{\bar\eta' }(x')\subseteq W^s(\Lambda)$, where $\bar
B^s_{\bar\eta' }(x' )$ is the $n$-dimensional closed ball of radius
$\bar\eta' $ centered at $x' $ and contained in $W^s(x^+ )$, with
$x^+ =\Omega^+_\Gamma(x' )$. Its exit set and entry sets
are defined as follows:
\[\begin{split}(\bar\Delta' )^{\rm exit}:=\bigcup_{x \in (\bar R' )^{\rm exit}}
 \bar B^{s}_{\bar\eta' }(x' ),\\
(\bar\Delta' )^{\rm entry}:=\bigcup_{x' \in (\bar R' )^{\rm
entry}}
 \bar B^{s}_{\bar\eta' }(x' )\cup \bigcup_{x\in (\bar R' )}
\partial \bar B^{s}_{\bar\eta' }(x' ).
\end{split}\]

We define
$\bar W' :=\bigcup_{y' \in\bar\Delta' }\bar B^{u}_{\bar\delta' }(y' )$,
where $\bar B^u_{\bar\delta' }(y' )$ is the  $n$-dimensional closed ball
centered at $y' $ and contained in the image of  $N'_{y' }$ under the
exponential map $\exp_{y' }:T_{y' }M\to M$, with $N'$ being the normal bundle to $W^s(\Lambda)$.
For $\bar\delta' >0$
sufficiently small   $\bar W'$ is a homeomorphic copy of a
$(l+n_u+n_s)$-dimensional rectangle. The exit set and the entry set of
$\bar W' $ are defined by:
\[\begin{split} (\bar W' )^{\rm exit}:=\bigcup_{y' \in(\bar\Delta' )^{\rm
exit}}\bar B^{u}_{\bar\delta' }(y' )\cup
\bigcup_{y' \in(\bar\Delta' )}\partial \bar B^{u}_{\bar\delta' }(y' ),\\
(W' )^{\rm entry}:=\bigcup_{y' \in(\bar\Delta' )^{\rm
entry}}\bar B^{u}_{\bar\delta'_{i+1}}(y' ).
\end{split}
\]

This completes the description of the thickening of the
$l$-dimensional window $\bar R $ into a $(l+n_u+n_s)$-dimensional window
$\bar W $, and of the thickening of the $l$-dimensional window
$\bar R' $ into a  $(l+n_u+n_s)$-dimensional window $\bar W' $. Note that
by construction $\bar W $ is contained in an $\eps$-neighborhood of $\Lambda$ and  $\bar W' $ is contained in an
$\eps'$-neighborhood of $\Gamma$.

In order to make $\bar W $ correctly aligned with $\bar W' $
under the identity map, we choose
$\bar\delta' $ sufficiently small relative to $\bar\delta $, and  $\bar\eta $ sufficiently small relative to $\bar\eta' $.

\textit{Step 2.} We take a negative iterate $f^{-M}(\bar R)$ of $\bar R$, where $M>0$.
We have that $f^{-M}(\Gamma)$ is $\eps$-close to $\Lambda$ in the $C^1$-topology,  for all $M$ sufficiently large. The vectors tangent to the fibers $W^u(x^-)$ in $\bar R$ are contracted, and the vectors transverse to
$W^u(\Lambda)$ along $\bar R$ are expanded by the
derivative of $f^{-M}$. We choose  $M$ sufficiently large so that
$f^{-M}(\bar R)$ is  $\eps$-close to $f^{-M}(R)$.

We now construct a window $W$ about $f^{-M}(R)$ that is correctly
aligned with $f^{-M}(\bar W)$ under the identity. Note that each closed ball $\bar B^{u}_{\delta}(x)$, which is a part of  $\bar\Delta$, gets exponentially contracted  as it is mapped onto  $W^u(f^{-{M }}(x^-))$ by $f^{-{M }}$. By the Lambda Lemma (see the version in \cite{Marco08}), each closed ball $\bar B^s_{\eta }(y)$, $y\in \bar\Delta $, $C^1$-approaches some subset of $W^s(f^{-{M }}(y^-))$ under $f^{-{M }}$, as $M \to\infty$. For $M $ sufficiently large, we can assume that $f^{-{M }}(\bar B^s_{\bar\eta }(y))$  is $\eps $-close to some ball in $W^s(f^{-{M }}(y^-))$ in the $C^1$-topology, for all $y\in \bar\Delta  $. We fix $M$ with the above properties.
As $R $ is correctly aligned with $\bar R$ under $(\Omega^-_\Gamma)^{-1}$, we have that $f^{-M }(R )$ is correctly aligned with $f^{-M }(\bar R )$ under $({\Omega^-}_{f^{-M }(\Gamma)})^{-1}$. In other words, $f^{-M }(R )$ is correctly aligned under the identity mapping with the projection of $f^{-M }(\bar R )$ onto $\Lambda$ along the unstable fibers.

To define the window $W$,  we use a local linearization of the normally hyperbolic
invariant manifold.
By Theorem 1 in \cite{PughS70}, there exists a homeomorphism $h$ of an open neighborhood of
$(E^u\oplus E^s)_{ \mid \Lambda}$ to an open neighborhood of $\Lambda$ in $M$ such that
$h\circ Df = f\circ h$.
We select a point  $x  \in f^{-M}( R )$.
Since  $R$ is contractible the bundles are trivial on
$f^{-M}(R)$ and we can identify $(E^u\oplus E^s)_{\mid f^{-M }(R)}$ with
$f^{-M }(R) \times E^u_{x } \times E^s_{x }$.
Let us consider $0<\delta <\eps$ and $0<\eta <\eps $.
We define  a window $W $ as
\[W =h(f^{-M }( R ) \times \bar B^u_{\delta }(0) \times \bar B^s_{\eta }(0)),\]
where $\bar B^u_{\delta }(0)$ is the closed ball centered at $0$ of radius $\delta $ in
$E^u_{x }$ and
$\bar B^s_{\eta }(0)$ is the closed ball centered at $0$ of radius $\eta $ in
$E^s_{x }$.
We define the exit set of $W $ as
\[(W )^{\text{exit}}
= h(f^{-M }(\bar R )  \times \partial \bar B^u_{\delta }(0) \times \bar B^s_{\eta }(0))
\cup h(f^{-M}(\bar R^{\text{exit}}) \times B^u_{\delta}(0) \times \bar
B^s_{\eta}(0)).\]

Similarly, the entry set of $W$  is defined  as
\[(W)^{\text{entry}}
= h(f^{-M}(\bar R)  \times \bar B^u_{\delta}(0) \times \partial \bar B^s_{\eta}(0))
\cup h(f^{-M}(\bar R^{\text{entry}}) \times B^u_{\delta}(0) \times \bar
B^s_{\eta}(0)).\]

In order to ensure the correct alignment of $W$ with $f^{-M}(\bar W)$ under the identity map, we choose $\delta,\eta$ such that
$h(f^{-M}( R) \times \bar B^u_{\delta}(0) \times \{0\})$ is correctly aligned with $f^{-M }(\bar\Delta )$ under the identity map (the exit sets of both windows being in the unstable directions),
and that each closed ball $f^{-M }(\bar B^s_{\eta})$ intersects $W$ in a closed ball  that is contained in the interior of $f^{-M }(\bar B^s_{\eta })$. The existence of suitable $\delta,\eta$ follows from the
exponential contraction of  $\bar\Delta $ under negative iteration, and from the Lambda Lemma applied to  $\bar B^s_{\eta }(y)$ under negative iteration.

In a similar fashion, we construct a window $W'$   contained in an $\eps'$-neighborhood of $\Lambda$ such  that $\bar W'$ is correctly aligned with $W'$ under $f^{N}$.
The window $W'$, and its entry and exit sets, are  defined by:
\[\begin{split}
W' =&h(f^{N}( R') \times \bar B^u_{\delta'}(0) \times \bar B^s_{\eta'}(0)),\\
(W' )^{\text{exit}}
=& h(f^{N}( R') \times \partial \bar B^u_{\delta'}(0) \times \bar B^s_{\eta'}(0))\\&
\cup h(f^{N}( (R')^\text{exit}) \times \bar B^u_{\delta'}(0) \times \bar B^s_{\eta'}(0)),\\
(W' )^{\text{entry}}
=&h(f^{N}( R') \times \bar B^u_{\delta'}(0) \times \partial\bar B^s_{\eta'}(0))
\\&\cup h(f^{N}(( R')^\text{entry}) \times \bar B^u_{\delta'}(0) \times \bar B^s_{\eta'}(0)),
\end{split}\]
for some appropriate choices of radii $0<\delta' ,\eta' <\eps'$.

\textit{Step 3.} Suppose that we have constructed, as in Step 2,  a window $W' $ about the $l$-dimensional rectangle $f^{N }(R' )\subseteq \Lambda$, and a window $W $ about the $l$-dimensional rectangle $f^{-M }(R )\subseteq \Lambda$. Under positive iterations,  the rectangle $\bar B^u_{\delta' }(0)\times \bar B^s_{\eta'}(0)\subseteq E^u\oplus E^s$ gets exponentially expanded in the unstable direction and exponentially contracted in the stable direction by $Df$. Thus $\bar B^u_{\delta' }(0)\times \bar B^s_{\eta' }(0)$ gets correctly aligned with $\bar B^u_{\delta }(0)\times \bar B^s_{\eta }(0)$ under  some  power $Df^{L}$ of $Df$, provided $L $ is sufficiently large.
This implies that  $f^{L }(h(\{x \}\times \bar B^u_{\delta' }(0)\times \bar B^s_{\eta' }(0)))$ is correctly aligned with $h({f^{L }(x )}\times \bar B^u_{\delta }(0)\times \bar B^s_{\eta }(0))$ under the identity map (both rectangles are contained in $h({f^{L }(x )}\times E^u\times E^s)$.

By assumption (iii), there exists $L' >\max\{L ,N +M \}$ such that $R' $ is correctly aligned with  $R $ under $f^{L' }$. This means that $f^{N }(R' )$ is correctly aligned with $f^{-M }(R )$ under $f^{K }$ with  $K  :=L' -N -M >0$.

The product property of correctly aligned windows implies that $W' $ is correctly aligned with $W $ under $f^{K }$, provided that $K $ is chosen as above.

\textit{Step 4.}
We will now describe the process  of constructing,   based on Steps 1, 2, and 3, two bi-infinite sequences of windows $\{W_{i},W'_{i}\}_{i\in\mathbb{Z}}$ and $\{\bar W_{i},\bar W'_{i}\}_{i\in\mathbb{Z}}$ such that, for all $i\in \mathbb{Z}$,
$W_i$ is correctly aligned with $\bar W_i$ under $f^{N_i}$, $\bar W_i$ is correctly aligned with $\bar W'_{i+1}$ under the identity mapping, $\bar W'_{i+1}$ is correctly aligned with $W'_{i+1}$ under $f^{M_i}$, and $W'_{i+1}$ is correctly aligned with $W_{i+1}$ under $f^{K_{i+1}}$.
The point of this step is that we can repeatedly choose the rectangles and the parameters at Steps 1, 2, and 3, in a consistent way, in order to produce infinite sequences of correctly aligned windows.

Staring with $i=0$ and continuing for  all $i\geq 0$, we do the following.

For a  given pair of $l$-dimensional windows $\{R_i,R'_{i+1}\}$,  we consider the corresponding copies $\{\bar R_i,\bar R'_{i+1}\}$ in $\Gamma$ such that $\bar R_i$ is correctly aligned with $\bar R'_{i+1}$ under the identity. As in Step 1, we construct a pair of $(l+n_u+n_s)$-dimensional windows $\bar W_i$ about $\bar R_i$, and $\bar W'_{i+1}$ about $\bar R'_{i+1}$.
The size of $\bar W_i$ in the hyperbolic directions is given by some disks radii $\bar\delta_i,\bar\eta_i<\eps_i$,  and  that of $\bar W'_{i+1}$ by some disk radii $\bar\delta'_{i+1}, \bar\eta'_{i+1}<\eps_{i+1}$.
We choose the quantities $\bar\delta_i,\bar\eta_i, \bar\delta'_{i+1}, \bar\eta'_{i+1}$  such that  $\bar W_i$ is correctly aligned with $\bar W'_{i+1}$ under the identity.

Then, we  consider the pair of $l$-dimensional windows $\{R_{i+1},R'_{i+2}\}$ in $\Lambda$, and their corresponding copies
$\{\bar R_{i+1},\bar R'_{i+2}\}$ in $\Gamma$. As in Step  1, we construct a pair of $(l+n_u+n_s)$-dimensional windows $\bar W_{i+1}$ about $\bar R_{i+1}$, and $\bar W'_{i+2}$ about $\bar R'_{i+2}$. By choosing the quantities $\bar\delta_{i+1},\bar\eta_{i+1}, \bar\delta'_{i+2}, \bar\eta'_{i+2}$ as in Step 1 we can ensure that  $\bar W_{i+1}$ is correctly aligned with $\bar W'_{i+2}$.

We choose $N_{i+1},M_{i+1}$ large enough so that $f^{N_{i+1}}(\bar W'_{i+1})$ is contained in an $\eps_{i+1}$-neighborhood of $f^{N_{i+1}}(R'_{i+1})$, and $f^{-M_{i+1}}(\bar W_{i+1})$ is contained in an $\eps_{i+1}$-neighborhood of $f^{-M_{i+1}}(R_{i+1})$. As in Step 2, we construct a window $W'_{i+1}$ about $f^{N_1}(R'_{i+1})$ such that $\bar W'_{i+1}$ correctly aligned with $W'_{i+1}$ under $f^{N_{i+1}}$, and a window  $W_{i+1}$ about $f^{-M_1}(R_{i+1})$ such that $W_{i+1}$ correctly aligned with $\bar W_{i+1}$ under $f^{M_{i+1}}$. This amounts to choosing the quantities
$\delta'_{i+1}, \eta'_{i+1},\delta_{i+1},\eta_{i+1}$ as in Step 2 in order to ensure the correct alignment of the windows.

Then, we choose $K_{i+1}$ sufficiently large, and at least as large as $N_{i+1}+M_{i+1}$, such that  $W'_{i+1}$ is correctly aligned with $W_{i+1}$ under $f^{K_{i+1}}$. At this point, we have that $\bar W'_{i+1}$ is correctly aligned with $W'_{i+1}$ under $f^{N_{i+1}}$,
$ W'_{i+1}$ is correctly aligned with $W_{i+1}$ under $f^{K_{i+1}}$, and $W_{i+1}$ is correctly aligned with $\bar W_{i+1}$ under $f^{M_{i+1}}$.

Inductively, we obtain two sequences of windows $\{W_{i},W'_{i}\}_{i\geq 0}$  and $\{\bar W_{i},\bar W'_{i}\}_{i\geq 0}$ that satisfy the desired correct alignment conditions for all $i\geq 0$.  A similar inductive construction of windows can be done backwards starting with $W_0, W'_1$.

In the end, we obtain two bi-infinite sequences of windows $\{W_{i},W'_{i}\}_{i\in\mathbb{Z}}$ and $\{\bar W_{i},\bar W'_{i}\}_{i\geq 0}$ that satisfy the desired correct alignment conditions   for all $i\in\mathbb{Z}$. By Theorem \ref{thm:detorb}, there exists an orbit $\{f^n(z)\}_{n\in\mathbb{Z}}$ with $f^{n_i}(z)\in \bar W_i\cap \bar W'_{i+1}$, $f^{n_i+N_{i+1}}(z)\in  W'_{i+1}$, $f^{n_i+N_{i+1}+K_{i+1}}(z)\in  W_{i+1}$, $f^{n_i+N_{i+1}+K_{i+1}+M_{i+1}}(z)\in  \bar W_{i+1}\cap \bar W'_{i+2}$, for all $i\in\mathbb{Z}$. Thus $n_{i+1}=n_i+N_{i+1}+K_{i+1}+M_{i+1}$. Such an orbit $\{f^n(z)\}_{n\in\mathbb{Z}}$ satisfies the properties required by Lemma \ref{lem:shadowing2}.
\end{proof}

%\subsection{Crossing of the BZI's using the boundary tori}

\section{Construction of correctly aligned windows}
\label{section:proof}
In this section we prove Theorem \ref{thm:main1}. The methodology consists of constructing $2$-dimensional windows in $\Lambda$ about the prescribed invariant primary tori, BZI's, and Aubry-Mather sets inside the BZI's.  The successive  pairs of windows are correctly aligned under the scattering map alternatively with powers of the inner map.
Lemma \ref{lem:shadowing2} implies that there exist  trajectories that follow these windows.

\subsection{Construction of correctly aligned windows across a BZI}
\label{sec:windowsbirkhoff}

In this section we will construct correctly aligned windows across a BZI between two successive transition chains of tori.
On each side of the BZI we will choose a  one-sided neighborhood of a point on the boundary torus, and we will use Theorem
\ref{thm:extensionbirkhoff2} or Theorem \ref{thm:extensionmathershadowing}  to cross over the BZI.
These one-sided neighborhoods are of a special type: their boundaries are  images of some  transition tori under the inner or outer dynamics. We will construct some windows about the boundaries of these one-sided neighborhoods. Then we will construct  some other windows  about the corresponding transition tori; these windows will be chosen so that they have a pair of  sides  lying on some nearby tori.
We will use this feature later  to connect sequence of windows across the BZI's with sequences of windows along the transition chains.

Consider an annular region $\Lambda_k$ in $\Lambda$ that is a BZI, and is between two  transition chains of invariant tori, as in (A5).
To simplify notation, we denote the  tori at the boundary of  $\Lambda_k$ by $T_{a}$ and $T_{b}$. We choose a pair of transition tori $T_{i}, T_{j}$ in $\Lambda$ as in (A5),  ordered as follows:  $T_{j}\prec  T_{i}\prec  T_{a}$. These tori are   outside of the BZI $\Lambda_k$ and on the same side of it as $T_{a}$. By  (A5-iv)   there exist $T_{i'}\prec T_i$ and  $T_{j'}\prec T_j$ such that  $T_{i'}$ is $\eps_i$-close to $T_i$  and $T_{j'}$ is $\eps_j$-close to $T_j$, in the $C^0$ topology. By (A5-ii) $S(T_j)$ intersects $T_i$ in a topologically transverse  manner, so both $S(T_j)$ and $S(T_{j'})$ intersect  both $T_i$ and $T_{i'}$ in a topologically transverse  manner, provided $T_{i'}, T_{j'}$ are sufficiently close to $T_i,T_j$, respectively.

Since $S$ is a diffeomorphism, $T_{i}, T_{i'}$ and the images of $T_{j}, T_{j'}$ under $S$ form a topological rectangle $D_{iji'j'}$ in $\Lambda$. This rectangle may not be contained in the domain $U^-$  of the scattering map $S$. Provided that we choose the tori
$T_{i}, T_{i'}$ sufficiently $C^0$-close  to one another,  and also $T_{j}, T_{j'}$ sufficiently $C^0$-close  to one another, the rectangle $D_{iji'j'}$ will be sufficiently small so that some iterate $\Fin^{K_a}$ of $\Fin$  takes the rectangle $D_{iji'j'}$ into a rectangle $\Fin^{K_a}(D_{iji'j'})$ inside $U^-$. This is possible since each torus intersects $U^-$ by (A5-i), and the motion on the  tori is topologically transitive by (A5-iii). The rectangle $\Fin^{K_a}(D_{iji'j'})$ has a pair of sides lying on the tori $T_{i}, T_{i'}$, and the other pair of sides on    $(\Fin^{K_a}\circ S)(T_{j}),(\Fin^{K_a}\circ S)(T_{j'})$. The curves $(\Fin^{K_a}\circ S)(T_{j}),(\Fin^{K_a}\circ S)(T_{j'})$  are topologically transverse to both $T_{i},T_{i'}$. By assumption (A5-ii), $S(T_i)$ topologically crosses $T_{a}$, so $S^{-1}(T_{a})$ topologically crosses $T_{i}$. We can ensure that the interior of $\Fin^{K_a}(D_{iji'j'})$ intersects $S^{-1}(T_{a})$ by choosing $K_a$ sufficiently large, and the tori in each pair
$T_{i}, T_{i'}$ and  $T_{j}, T_{j'}$ sufficiently $C^0$-close  to one another. This implies that the image of $\Fin^{K_a}(D_{iji'j'})$ under $S$ is a topological rectangle  in $\Lambda$ which intersects $T_{a}$, and the intersection of $S(\Fin^{K_a}(D_{iji'j'}))$ with $\Lambda_k$ forms a one-sided neighborhood in $\Lambda_k$ of some part of $T_{a}$. The boundary of  $S(\Fin^{K_a}(D_{iji'j'}))\cap \Lambda_k$ consists of arcs of the curves $T_{a}, S(T_i), S(T_{i'}),S\circ\Fin^{K_a}\circ S(T_j),S\circ \Fin^{K_a}\circ S(T_{j'})$.

We make a similar construction on the other side of the BZI $\Lambda_k$.
We choose a pair of transition tori $T_{k}, T_{l}$ with $ T_{b}\prec T_{k}\prec  T_{l}$, outside of the BZI $\Lambda_k$ and on the same side as $T_{b}$. We also choose $T_{k}\prec T_{k'}$ and  $T_{l}\prec T_{l'}$ such that  $T_{k'}$ is $\eps_k$-close to $T_k$  and $T_{l'}$ is $\eps_l$-close to $T_l$, and $S^{-1}(T_l), S^{-1}(T_{l'})$ are topologically transverse to  both $T_k,T_{k'}$, and so they form a  a topological rectangle $D_{klk'l'}$.  There exists $K_b$ sufficiently large such that $\Fin^{-K_b}(D_{klk'l'})\subseteq U^-$, its interior intersects $S(T_{b})$, and $S^{-1}(\Fin^{-K_b}(D_{klk'l'}))$  forms a one-sided neighborhood in $\Lambda_k$ of some part of $T_{b}$. The boundary of  $S^{-1}(\Fin^{K_b}(D_{klk'l'}))\cap \Lambda_k$ consists of arcs of the curves $T_{b}, S(T_k), S(T_{k'}),S^{-1}\circ\Fin^{K_b}\circ S^{-1}(T_l),S^{-1}\circ \Fin^{K_b}\circ S^{-1}(T_{l'})$.

\begin{figure} \centering
\includegraphics*[width=0.6\textwidth, clip, keepaspectratio]
{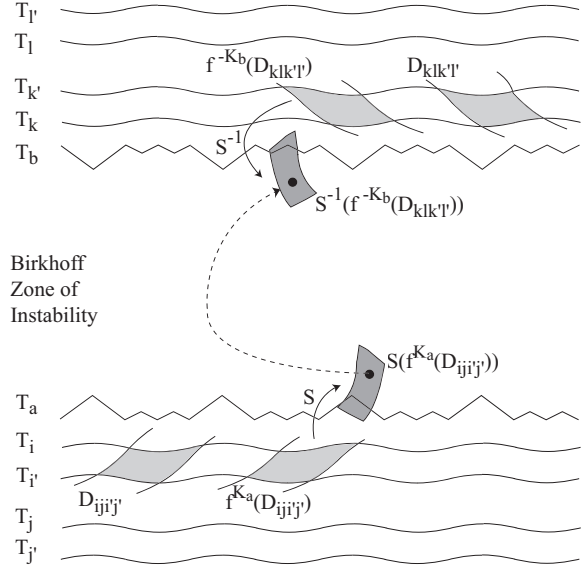}
    \caption[]{Orbits across a BZI}
    \label{transtorigaps.eps}
\end{figure}

At this stage we have obtained  in $\Lambda_k$ a one-sided neighborhood $(S\circ\Fin^{K_a})(D_{iji'j'})$  of an arc  in $T_{a}$, and a one-sided neighborhood $(S^{-1}\circ \Fin^{-K_b})(D_{klk'l'})$ of an arc in $T_{b}$.

If we are under the assumptions (A1)-(A6),  Theorem \ref{thm:extensionbirkhoff2} yields a point $x_a\in\partial (S\circ\Fin^{K_a})(D_{iji'j'})$ whose image $x_b=\Fin^{K_{ab}}(x_a)$ under some power $\Fin^{K_{ab}}$ lies on $\partial (S^{-1}\circ \Fin^{-K_b})(D_{klk'l'})$.

If we also assume (A7),   Theorem \ref{thm:extensionmathershadowing} yields a point $x_a\in\partial (S\circ\Fin^{K_a})(D_{iji'j'})$ with $x_b=\Fin^{K_{ab}}(x_a)\in\partial (S^{-1}\circ \Fin^{-K_b})(D_{klk'l'})$ as above, satisfying the additional conditions \[\pi_\phi(f^j(w^k_s))<\pi_\phi(f^j(x_a))<\pi_\phi(f^j(\bar w^k_s)), \]
for each $s\in\{1,\ldots, s_k\}$, where  $w^k_s,\bar w^k_s\in\Sigma_{\omega^k_s}$, and for all $j$ within a certain interval of integers. The trajectories of all points sufficiently close to $x_a$ will satisfy these conditions as well.

In either case, there exist an arc $\bar e'_a\subseteq \partial (S\circ\Fin^{K_a})(D_{iji'j'})$ containing $x_a$, and another arc $\bar e_b\subseteq \partial (S^{-1}\circ \Fin^{-K_b})(D_{klk'l'})$ containing $x_b$, such that $\Fin ^{K_{ab}}(\bar e'_a)$ is topologically transverse to  $\bar e_b$ at $x_b$.

The arc $\bar e'_a$ lies on one of the sets  $S({T_{i}}), S({T_{i'}}), (S\circ \Fin ^{K_{a}}\circ S)(T_{j}), (S\circ \Fin ^{K_{a}}\circ S)(T_{j'})$. Similarly, the arc $\bar e_b$ lies on one of the sets  $S^{-1}(T_{k}), S^{-1}({T_{k'}}), (S^{-1}\circ \Fin ^{-K_{b}}\circ S^{-1})(T_{l}), (S^{-1}\circ \Fin ^{-K_{b}}\circ S^{-1})(T_{l'})$.

We define a $2$-dimensional window $R'_a$ about $\bar e'_a$, and a $2$-dimensional window $R_b$ about $\bar e_b$  such that $R'_a$ is correctly aligned with $R_b$ under $\Fin^{K_{ab}}$. Informally, the exit direction of $R'_a$ is  along $\bar e'_a$, and the exit direction of $R_b$  is  across $\bar e_b$. The formal  construction now follows.
Since the arc $\bar e'_a$ is an embedded $1$-dimensional $C^0$-submanifold of $\Lambda$, there exists a $C^0$-local parametrization $\chi'_a:\mathbb{R}^2\to \Lambda$ such that $\chi'_a([0,1]\times \{0\})=\bar e'_a$,  provided $\bar e'_a$ is sufficiently small. Then
\[R'_a=\chi_a([0,1]\times [-\eta'_a,\eta'_a]),\]
is a topological rectangle.
We define the exit set of $R'_a$ as
\[{R'}_a^\exit=\chi_a(\partial[0,1]\times [-\eta'_a,\eta'_a]).\]
The definition of the entry set  of of $R'_a$ follows by default.

Similarly, there exists a $C^0$-local parametrization $\chi_b:\mathbb{R}^2\to \Lambda$ such that $\chi_b(\{0\}\times[0,1])=\bar e_b$, and
\[R_b=\chi_b([-\delta_b,\delta_b]\times [0,1]),\]
is a topological rectangle.
We define the exit set of $R_b$ as
\[{R}_b^\exit=\chi_b(\partial[-\delta_b,\delta_b]\times [0,1]).\]

By choosing $\eta'_a, \delta_b$ sufficiently small, we ensure that  $R'_a$ is correctly aligned with $R_b$ under $\Fin^{K_{ab}}$ (see Definition \ref{defn:corr}). See Figure \ref{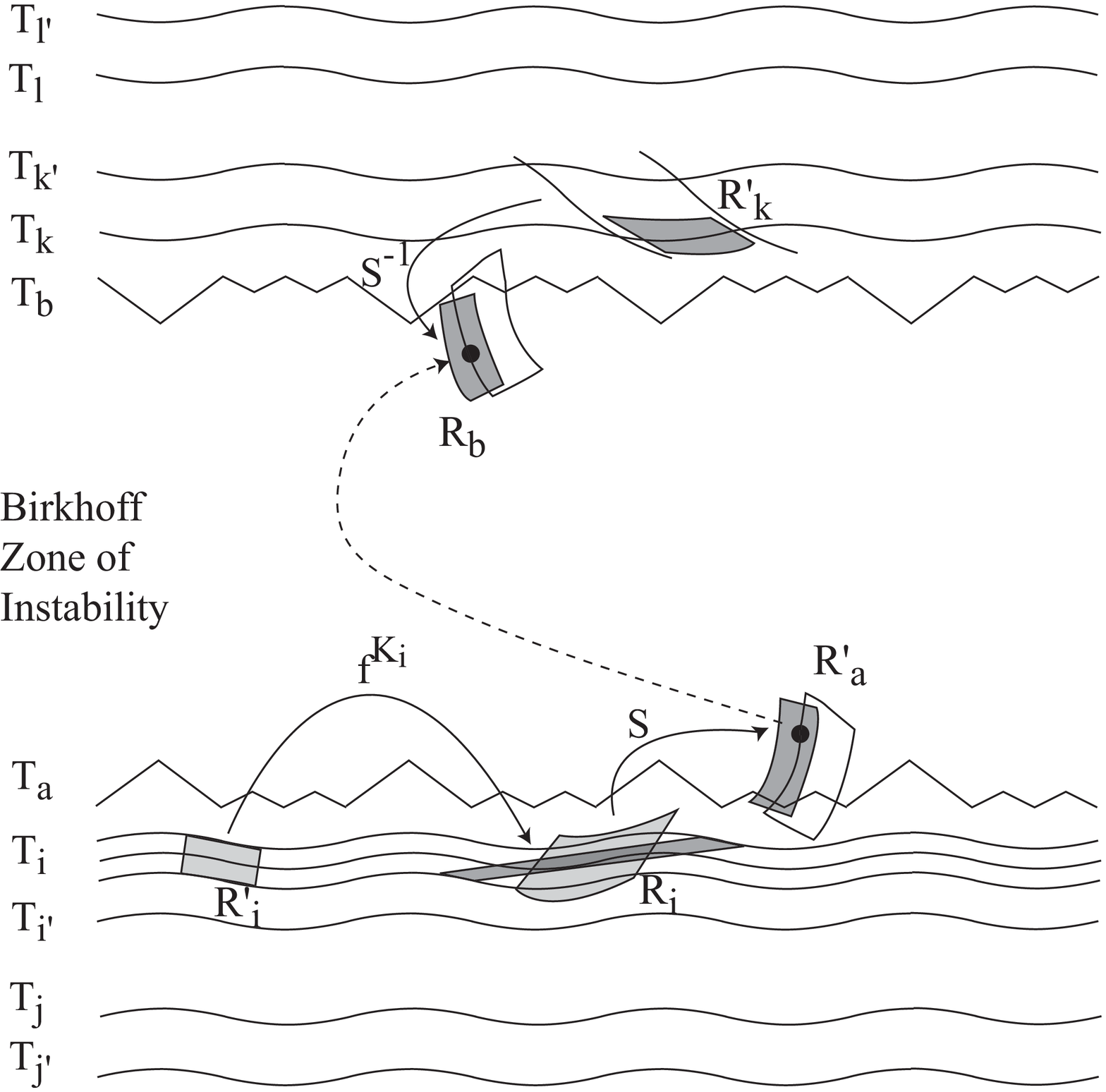}.

We now construct other windows outside the BZI $\Lambda_k$.  We consider two cases: first case, when the arc $\bar e'_{a}$ is a part  of $S(T_{i})$ or $S(T_{i'})$; second case, when $\bar e'_a$  is a part  of $(S\circ \Fin ^{K_a}\circ S)(T_{j})$ or $(S\circ \Fin ^{K_a}\circ S)(T_{j'})$.

\textsl{Case 1. } In the first case, when $\bar e'_a$ is a part  of $S(T_{i})$ or $S(T_{i'})$ we proceed with the construction as follows.
Let us say  that $\bar e'_a$ is a part  of $S(T_{i})$.  We take the inverse image of $R'_a$ by $S$. This is a topological rectangle $S^{-1}(R'_a)$ about the torus $T_{i}$. We construct a window $R_{i}$  about an arc $\bar e_{i}$ in $T_{i}$,  such that $R_{i}$ is correctly aligned with $S^{-1}(R'_{a})$ under the identity map, and with the exit direction of $R_{i}$ in the direction of  $T_i$.
Let $\bar e_{i}$ be an arc in $T_{i}$, and $\chi_{i}:\mathbb{R}^2\to\Lambda$  be a local parametrization with $\chi_{i}([0,1]\times\{0\})= \bar e_{i}$.
We define \begin{eqnarray*}
R_{i}=\chi_{i}([0,1]\times [-\eta_i,\eta_i])\\
{R}^\exit_{i}=\chi_{i}(\partial [0,1]\times [-\eta_i,\eta_i])
,\end{eqnarray*}
where $\eta_{i}>0$ is sufficiently small.
By choosing the arc $\bar e_{i}$ sufficiently large so that $\bar e_{i}\supseteq S^{-1}(\bar e'_a)$, and by choosing $\eta_{i}>0$ sufficiently small, we can ensure that $R_{i}$ is correctly aligned with $S^{-1}(R'_{a})$ under the identity map, or, equivalently, $R_{i}$ is correctly aligned with $R'_{a}$ under $S$.
See Figure \ref{transtorigaps4-2.eps}.

\begin{figure} \centering
\includegraphics*[width=0.6\textwidth, clip, keepaspectratio]
{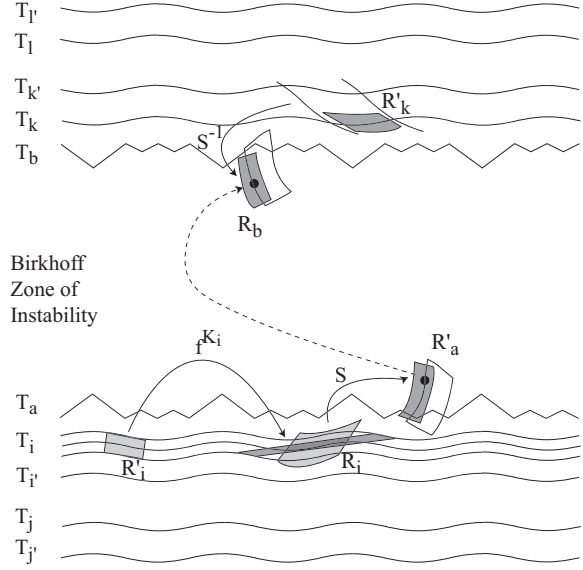}
    \caption[]{Construction of windows near the boundaries of a BZI -- case 1}
    \label{transtorigaps4-2.eps}
\end{figure}

We construct another window $R'_{i}$ about  $T_{i}$   such that $R'_{i}$ is correctly aligned with $R_{i}$ under some power $\Fin ^{K_{i}}$ of $\Fin$, with $K_{i}>0$. The window $R'_{i}$ will have its exit direction  across the torus $T_{i}$. We choose a pair of invariant primary tori $T^{\rm lower}_{i}\prec T_{i}\prec  T^{\rm upper}_{i}$, with $T^{\rm lower}_{i}, T^{\rm upper}_{i}$ located $\eps_{i}$-close to $T_{i}$ in the $C^0$-topology. Such tori exist due to the assumption that $T_{i}$ is not an end torus in the transition chain, as in (A5-iv).
Moreover, we choose the tori $T^{\rm lower}_{i}, T^{\rm upper}_{i}$  sufficiently close to $T_{i}$ so that the components of $R_{i}^{\rm entry}$ are outside the annulus between $T^{\rm lower}_{i}$ and $ T^{\rm upper}_{i}$, with one component on one side and the other component on the other side of this annulus. We choose an arc  $\bar e'_i\subseteq T_i$ that lies on an edge of the topological rectangle $D_{iji'j'}$.  Let   $\chi'_{i}:\mathbb{R}^2\to\Lambda$ be a $C^0$-local parametrization with $\chi'_{i}(\{0\}\times[0,1])= \bar e'_{i}$,  such that for some $\delta'_i$ sufficiently small, $\chi'_i (\{-\delta'_i\} \times[0,1])\subseteq T_i^{\rm lower}$ and $\chi'_i (\{\delta'_i\} \times[0,1])\subseteq T_i^{\rm upper}$.
We define $R'_{i}$ by:
\begin{eqnarray*}
R'_{i}=\chi'_i ([-\delta'_i,\delta'_i]\times[0,1]),\\
{R'}_{i}^\exit=\chi'_i (\partial[-\delta'_i,\delta'_i]\times[0,1]).
\end{eqnarray*}
The exit set components of $R'_i$ lie on the tori $T_i^{\rm upper}, T_i^{\rm lower}$ neighboring $T_i$.

Since the motion on the tori is topologically transitive, by (A5-iii), there exists $K_{i}>0$ such that $R'_{i}$ is correctly aligned with $R_{i}$ under $\Fin ^{K_{i}}$. Indeed, to achieve correct alignment of these windows in the covering space of the annulus we only have to choose $K_{i}$ sufficiently large so that  the two components of ${R'}_{i}^\exit$, which are two arcs in $T^{\rm lower}_{i}$ and $T^{\rm upper}_{i}$, are mapped by $\Fin^{K_{i}}$  on the  opposite sides  of the part  of $R_{i}$ between $T^{\rm lower}_{i}$ and $T^{\rm upper}_{i}$.  We note that the number of iterates $K_{i}$ of $\Fin$ needed to make $R'_{i}$ correctly aligned with $R_{i}$ may be different than the number of iterates $K_a$ which takes the topological rectangle $D_{iji'j'}$ onto $\Fin^{K_a}(D_{ihi'j'})$. See Figure \ref{transtorigaps4-2.eps}.
The conclusion of this step is that we obtain the window $R'_{i}$ around $T_{i}$, with its exit direction across $T_{i}$, such that $R'_{i}$ is correctly aligned with $R_{i}$ under $\Fin ^{K_{i}}$.
Both windows $R'_i,R_i$ are contained in an $\eps_i$-neighborhood of $T_i$.

In the case when the edge  $\bar e'_a$ of $D_{iji'j'}$ is a part  of $S(T_{i'})$ instead of $S(T_{i})$, the construction goes similarly to the one above.

\textsl{Case 2. }We now consider the second case, when the arc $\bar e'_a$  of $\partial (S\circ \Fin ^{K_a})(D_{iji'j'})$ is a part  of $(S\circ \Fin ^{K_a}\circ S)(T_{j})$ or $(S\circ \Fin ^{K_a}\circ S)(T_{j'})$. Let us say that $\bar e'_a$  is a part  of $(S\circ \Fin ^{K_a}\circ S)(T_{j})$. We construct a window $R'_a$ in $\Lambda$ about  $\bar e'_a$ as before; the exit set of $R'_a$ is in a direction along $\bar e'_a$, and the size of $R'_a$ in the direction across $R_a$ is given by some parameter $\eta'_a$. See Figure \ref{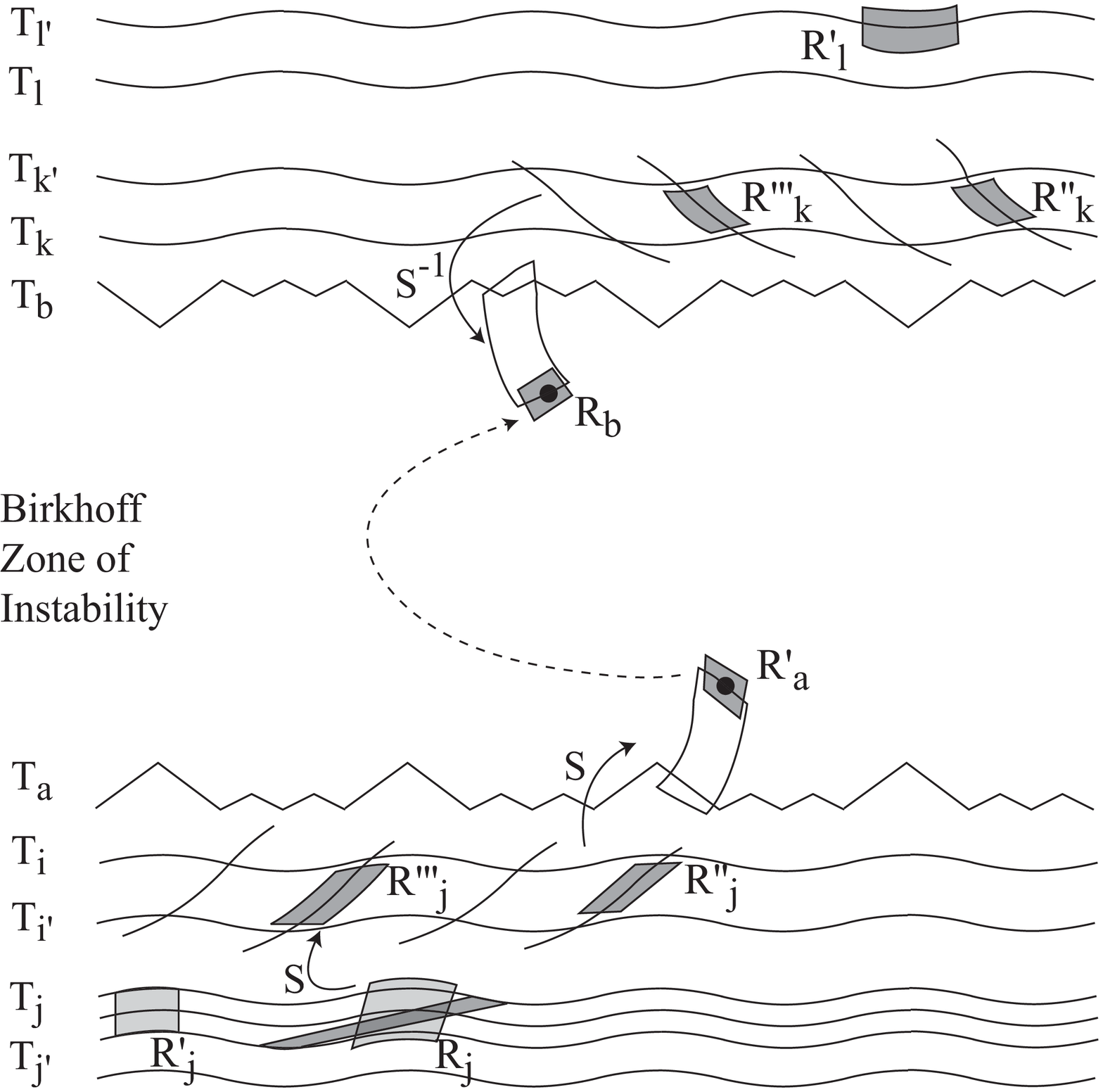}. We consider  the inverse image $S^{-1}(R'_a)$ of $R'_a$ by $S$,   which is a topological rectangle about $(\Fin^{K_a}\circ S)(T_{j})$. Let $\bar e''_j$ be an arc in $(\Fin^{K_a}\circ S)(T_{j})$ such that $S(\bar e''_j)\supset \bar e'_a$, and let $\chi''_j :\mathbb{R}^2\to\Lambda$ be a local parametrization with $\chi''_{j}([0,1]\times\{0\})= \bar e''_{j}$.
We define $R''_{j}$ by:
\begin{eqnarray*}
R''_{j}=\chi''_j ([0,1]\times[-\eta''_j,\eta''_j]) ,\\
{R''}_{j}^\exit=\chi''_j (\partial [0,1]\times[-\eta''_j,\eta''_j]).
\end{eqnarray*}
By choosing  $\eta''_{j}>0$ sufficiently small, we can ensure that $R''_{j}$ is correctly aligned with aligned with $R'_{a}$ under $S$. The window $R''_{j}$ is  in a neighborhood of an edge   of the topological rectangle $\Fin^{K_a}(D_{iji'j'})$.   The exit set of $R''_{j}$ is in the direction of  $(\Fin^{K_a}\circ S)(T_{j})$. See Figure \ref{transtorigaps7.eps}.

\begin{figure} \centering
\includegraphics*[width=0.6\textwidth, clip, keepaspectratio]
{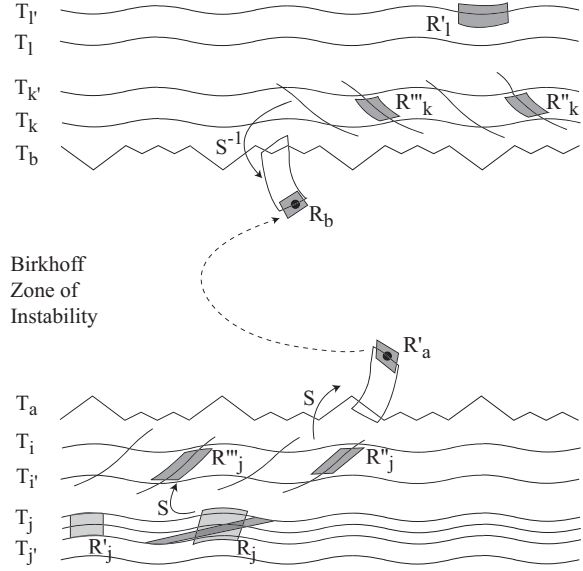}
    \caption[]{Construction of windows near the boundaries of a BZI -- case 2}
    \label{transtorigaps7.eps}
\end{figure}

We construct a window $R'''_{j}$ about  $S(T_j)$ such that
$R'''_{j}$ is correctly aligned with $R''_{j}$ under $\Fin ^{K_{a}}$.
We define \begin{eqnarray*}
R'''_{j} =\chi'''_j ([0,1]\times[-\eta'''_j,\eta'''_j]) ,\\
{R'''}_{j}^\exit=\chi'''_j (\partial[0,1]\times[-\eta'''_j,\eta'''_j]),\end{eqnarray*}
where $\bar e'''_{j}$ is an arc of $S(T_{j})$ with $\Fin^{K_a}(\bar e'''_{j})\supseteq \bar e''_j$, $\chi'''_j :\mathbb{R}^2\to\Lambda$ is a local parametrization with $\chi'''_{j}([0,1]\times\{0\})= \bar e'''_{j}$, and $\eta'''_j>0$ is sufficiently small. The arc $\bar e'''_{j}$ is contained in one of the edges of the topological rectangle $D_{iji'j'}$.
For suitable  $\eta'''_{j}>0$, we can ensure that $R'''_{j}$ is correctly aligned with aligned with $R''_{j}$ under $\Fin^{K_a}$. Moreover, we  choose  $R'''_{j},R''_{j}$ so  that these windows are both contained
in an $\eps_j$ neighborhood of $T_j$.
The exit set of $R'''_{j}$ is in a direction along $S(T_{j})$.
%Note that there is no switch from the  exit  directions of $R'''_{j}$ to the exit direction of $R''_{j}$, unlike the switch in  the exit directions  from  $R'_{i}$ to $R_{i}$ in the first case.

We take the inverse image $S^{-1}(R'''_{j})$ of $R'''_{j}$ under $S$, which is a topological rectangle about an arc in  $T_{j}$. In a fashion similar to Case 1, we construct two windows $R'_j,R_j$ about $T_j$ such that $R'_j$ is correctly aligned with $R_j$ under some power $\Fin^{K_j}$ of $\Fin$, and $R_j$ is correctly aligned with $R'''_j$ under $S$. The exit of $R'_j$ is chosen in a direction across $T_j$, and the exit set components ${R'}^{\rm exit}_j$ of
$R'_j$  lie on two invariant primary tori $T_j^{\rm upper}$, $T_j^{\rm lower}$ neighboring $T_j$, that are $\eps_j$-close to $T_j$ and contain $T_j$ between them. The exit of $R_j$ is in a direction along $T_j$.  The windows $R'_j,R_j$ are chosen to lie in an $\eps_j$ neighborhood of $T_j$ relative to the $C^0$-topology.

The case when the arc $\bar e'_a$  is a part  of $(S\circ \Fin ^{K_a}\circ S)(T_{j'})$ is treated similarly.

This concludes the construction of correctly aligned windows in $\Lambda$, starting with the window $R'_{a}$ about $T_{a}$, and moving backwards along transition tori that are on the same side as $T_{a}$ of the BZI $\Lambda_k$.

This construction yields   a sequence of windows of the type
\begin{eqnarray}\label{eqn:seqa1}
% \nonumber to remove numbering (before each equation)
 R'_{i}, R_{i}, R'_{a},
\end{eqnarray}
 in the first case, or
\begin{eqnarray}\label{eqn:seqa2}
 R'_{j}, R_{j}, R'''_{j}, R''_{j}, R'_{a},
\end{eqnarray} in the second case.
In the first case,  $R'_{i}$ is correctly aligned with $R_{i}$ under some power $\Fin ^{K_{i}}$ of the inner map, and $R_{i}$ is correctly aligned with $R'_{a}$ under the outer map $S$. The exit direction of  $R'_{i}$  is across the torus $T_{i}$, and its exit set components lie on some invariant primary tori that are $\eps_i$-close to $T_{i}$. The windows $R'_i,R_i$ are contained in an $\eps_i$ neighborhood of $T_i$.
In the second case,  $R'_{j}$ is correctly aligned with $R_{j}$ under some power $\Fin ^{K_{j}}$ of the inner map,  $R_{j}$ is correctly aligned with $R'''_{j}$ under the outer map $S$, $R'''_{j}$ is correctly aligned with $R''_{j}$ under  some power $\Fin ^{K_{a}}$ of the inner map, and $R''_{j}$ is correctly aligned with $R'_{a}$ under the outer map $S$. The exit direction of $R_{j}$ is  across the torus $T_{j}$, and its exit set components lie  on some invariant primary tori that are $\eps_j$-close to $T_{j}$. The windows $R'_j,R_j$ are contained in an $\eps_j$ neighborhood of $T_j$, and the windows $R'''_j,R''_j$ are contained in an $\eps_i$ neighborhood of $T_i$.

We proceed with a similar construction on the other side of the BZI between $\Lambda_k$, that is, on the same side of the BZI as $T_b$. We have already defined the window $R_b$ about $T_b$ that $R'_a$ is correctly aligned with $R_b$ under $\Fin^{K_{ab}}$.
Starting with the window $R_{b}$  and moving forward along the transition chain $T_b,T_k,T_l$, we construct a sequence of windows  of the type
\begin{eqnarray}
% \nonumber to remove numbering (before each equation)
R_{b},  R'_{k} \label{eqn:seqb1}
\end{eqnarray}
 or of the type
\begin{eqnarray}\label{eqn:seqb2}
 R_{b},  R'''_{k}, R''_{k}, R'_{l},
\end{eqnarray}
satisfying the correct alignment conditions below.
In the first case,  $R_{b}$ is correctly aligned with $R'_{k}$ under the outer map $S$. The exit direction of $R'_k$ is across $T_k$, and the exit set components lie on two invariant primary tori $\eps_k$-close to $T_k$. Moreover, $R'_{k}$ is contained in an $\eps_k$-neighborhood of $T_k$.
In the second case,  $R'_{b}$ is correctly aligned with $R'''_{k}$ under the outer map $S$, $R'''_{k}$ is correctly aligned with $R''_{k}$ under some power $\Fin ^{K_b}$ of the inner map, and $R''_{k}$ is correctly aligned with $R'_{l}$ under  the outer map $S$. The exit direction of  $R'_{l}$ is  across $T_{l}$, and its exit set components lie on some invariant primary tori that are $\eps_l$-close to $T_{l}$. The windows $R'''_{k}, R''_{k}$ are contained in an $\eps_k$-neighborhood of $T_k$, and $R'_l$ is contained in an $\eps_l$-neighborhood of $T_l$.

Similar statements apply when instead of windows around $T_{i},T_{j},T_{k},T_{l}$ we construct windows
about $T_{i'},T_{j'},T_{k'},T_{l'}$, respectively.

The conclusion of this section is that, by combining a sequence of correctly aligned window of the type \eqref{eqn:seqa1} or \eqref{eqn:seqa2} with a sequence of correctly aligned window of the type \eqref{eqn:seqb1} or \eqref{eqn:seqb2} we obtain a finite sequence of correctly aligned windows that crosses the BZI $\Lambda_k$. The shadowing lemma-type of result Lemma \ref{lem:shadowing2} yields an orbit that visits some prescribed neighborhood in the phase space of each window in $\Lambda$. In particular, the shadowing orbit has points that go  close to the transition tori.

\subsection{Construction of correctly aligned windows across annular regions separated by invariant tori}
\label{sec:windowsbzis}

We consider an annular region in $\Lambda$ between two  transition chains of invariant tori. Inside this annular region, we assume the existence of a finite collection of invariant tori that separate the region, as in (A6$'$-i), and are vertically ordered as in  (A6$'$-ii). Thus, the annular  region between the transition chains is not a BZI. We also assume that the scattering map satisfies  the   non-degeneracy condition (A6$'$-iii) on these invariant tori. The situation presented in this section is non-generic.

Assume that the annular region in $\Lambda$   is  bounded by a pair of invariant Lipschitz tori $T_{a}$ and $T_{b}$. Let  $T_a$ be the end torus of the transition chain of one side, and $T_b$ be the  end torus of the transition chain on the other side.  We assume that inside the region in the annulus bounded by $T_a$ and $T_b$ there exist a finite collection of invariant tori $\{\Upsilon_{h}\}_{h=1,\ldots,k-1}$. Each $\Upsilon_{h}$ is either an  isolated invariant  primary torus, or consists of a hyperbolic periodic orbit together with  the upper branches, or with the lower branches, of its stable and unstable manifolds;  the stable and unstable manifolds are assumed to coincide. In the second case, there is another invariant torus, say $\Upsilon_{h+1}$,  formed by  the remaining branches of the same hyperbolic periodic orbit   as for $\Upsilon_{h}$.  $\Upsilon_{h+1}$ is assumed to be above $\Upsilon_{h}$, relative to the $I$-coordinate, and is assumed to share with $\Upsilon_{h}$ only the points of the periodic orbit. The region in the annulus bounded by $\Upsilon_{h}$ and $\Upsilon_{h+1}$ is referred to as a resonant region.

Thus, the region in the annulus between $T_a$ and $T_b$ is divided into a finite number of BZI's and resonant regions.

The constructions in Subsection \ref{sec:windowsbirkhoff} provide a one-sided neighborhood of the type $(S\circ\Fin^{K_a})(D_{iji'j'})$ of some point in $T_a$, and a one-sided neighborhood  $(S^{-1}\circ \Fin^{-K_b})(D_{klk'l'})$ of some point in $T_b$. Let us denote $D_0=(S\circ\Fin^{K_a})(D_{iji'j'})$ and $D_{k+1}=(S^{-1}\circ \Fin^{-K_b})(D_{klk'l'})$.

Assume that $\Upsilon_1$ is an isolated invariant primary torus. We have that $S^{-1}(\Upsilon_{1})$ forms with $\Upsilon_{1}$  a topological disk $D_1$ between $T_a$ and $\Upsilon_1$, which is mapped by $S$ onto a topological disk $S(D_{1})$ between $\Upsilon_1$ and $\Upsilon_{2}$.
Theorem \ref{thm:extensionbirkhoff2} provides us a trajectory that starts from $\partial  D_0$ and ends at $\partial  D_1$. In particular, there exist $K_1>0$ and a component of $f^{K_1}(D_{0})\cap D_1$ that is a topological disk $D'_1$ whose boundary  contains an arc of $S^{-1}(T_1)$. The image of $D'_1$ under $S$ is a one-sided neighborhood $D''_1\subseteq S(D_1)$ of some point in $\Upsilon_1$, such that $D''_1$ is contained in the region between $\Upsilon_1$ and $\Upsilon_2$. The boundary of $S(D'_1)$ consists of an arc in $\Upsilon_1$ and an arc in $(S\circ f^{K_1}(\partial D_{0}))$. See Fig. \ref{fig:isolated}.
\begin{figure} \centering
\includegraphics*[width=0.6\textwidth, clip, keepaspectratio]
{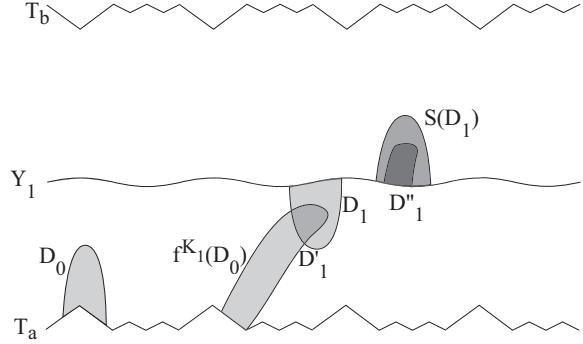}
    \caption[]{Crossing over an isolated invariant primary torus.}
    \label{fig:isolated}
\end{figure}

Now assume that $\Upsilon_1$ consists of a hyperbolic periodic orbit, together with the lower branches of its stable and unstable manifolds, and that $\Upsilon_2$ consists of the same hyperbolic periodic orbit, together with the upper branches of the  stable and unstable manifolds. The stable and unstable manifolds are assumed to coincide.  Excepting for the common points, $\Upsilon_{1}$ is below  $\Upsilon_{2}$. Thus, $\Upsilon_1$ and $\Upsilon _2$ enclose a resonant region within the annulus. We have that $S^{-1}(\Upsilon_{1})$ forms with $\Upsilon_{1}$  a topological disk $D_1$ between $T_a$ and $\Upsilon_1$, which is mapped by $S$ onto a topological disk $S(D_{1})$ between $\Upsilon_1$ and $\Upsilon_{2}$.
We also have that $S^{-1}(\Upsilon_{2})$ forms with $\Upsilon_{2}$  a topological disk $D_2$ between $\Upsilon_1$ and $\Upsilon_2$, which is mapped by $S$ onto a topological disk $S(D_{2})$ between $\Upsilon_2$ and $\Upsilon_3$.
By Theorem \ref{thm:extensionbirkhoff2} there exist  $K_1>0$ and a component of $f^{K_1}(D_{0})\cap D_1$ that is a topological disk $D'_1 $ whose boundary  contains an arc of $S^{-1}(\Upsilon_1)$.
The image of $D'_1$ under $S$ is a one-sided neighborhood $D''_1\subseteq S(D_1)$ of some point in $\Upsilon_1$, contained in the region between $\Upsilon_1 $ and $\Upsilon_2$. The boundary of ${D''}_1$ consists of an arc in $\Upsilon_1$ and an arc in $S(f^{K_1}(\partial D_{0}))$. By the argument in the Homoclinic Orbit Theorem (see e.g. \cite{BurnsW95}), there exists $N_1$ such that $f^{N_1}({D''}_1)$ intersects $D_2$. Let ${D'}_2$ be a component of this intersection that is a topological disk, and whose boundary contains an arc of $S^{-1}(\Upsilon_2)$. Then the image of ${D'}_2$ under $S$ is a one sided neighborhood $D''_2$ of some point in $\Upsilon_2$ that
is contained in the region between $\Upsilon_2$ and $\Upsilon_3$. The boundary of $D''_2$ consists of an arc in $\Upsilon_2$ and an arc in $(S\circ f^{N_1}\circ S\circ f^{K_1})(\partial D_{0})$. See Figure~\ref{fig: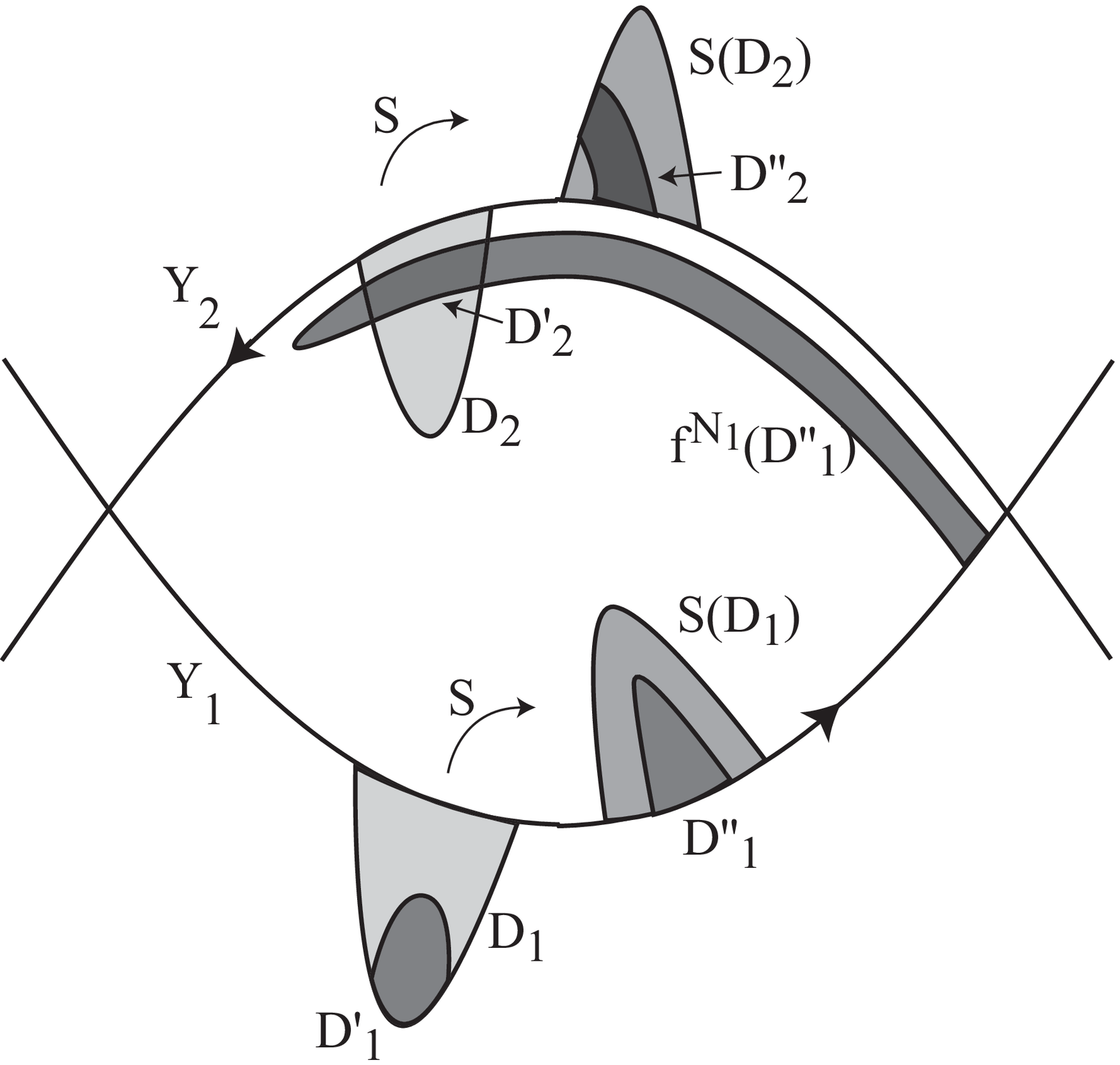}.

\begin{figure} \centering
\includegraphics*[width=0.45\textwidth, clip, keepaspectratio]
{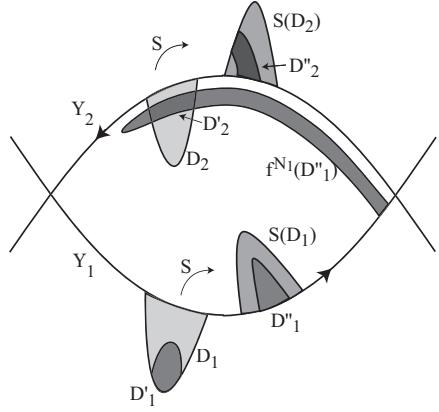}
    \caption[]{Construction of windows across a resonance.}
    \label{fig:resonance.eps}
\end{figure}

The main point of this construction is that, using the inner and outer dynamics, one can cross the successive BZI's and  resonance regions determined by  $\{\Upsilon_h\}_{h=1,\ldots,k}$ one at a time, and obtain at each step a  one-sided neighborhood of some point in some $\Upsilon_h$, which is  between $\Upsilon_h$ and $\Upsilon_{h+1}$, such that the boundary of that neighborhood contains the image of $\partial D_0$ under a suitable composition of $S$ and powers of $f$.

This argument can be repeated for each invariant torus $\Upsilon_h$ with $2\leq h\leq k$,  yielding  a point $x_a\in \partial D_{0}$ that is  mapped by some appropriate composition of $S$ and powers of $f$ onto a point $x_b\in \partial D_{k+1}$. Then the constructions from  Subsection \ref{sec:windowsbirkhoff} can  be applied to obtain a window $R'_a$ about  $\partial D_{0}$, and a window $R'_b$ about $\partial D_{k+1}$, such  that $R'_a$  is correctly aligned with $R_b$ under the appropriate composition of $S$ and powers of $f$. Then there exists a shadowing orbit that goes from a neighborhood of $R_a$ in $M$ to a neighborhood of $R'_b$ in $M$.

If the region between $T_a$ and $T_b$ contains some prescribed collection of Aubry-Mather sets, then Theorem \ref{thm:extensionmathershadowing} yields a shadowing orbit that crosses the region between $T_a$ and $T_b$ and follows the prescribed Aubry-Mather sets as in \eqref{eqn:main22}.

\subsection{Construction of correctly aligned windows along transition chains of tori}
\label{sec:windowstransitionchain}
We consider a finite transition chain of invariant primary tori $\{T_1,T_2,\ldots, T_n\}$ in the annulus $\Lambda$ satisfying (A5). All tori $T_k$, $k=1, \ldots, n$, intersect the subset $U^-_+$ of the domain of the scattering map $S$ where $S$ moves points upwards in the annulus $\Lambda$.
The motion on each torus $T_k$ is topologically transitive. Each torus $T_k$ with $2\leq k\leq n-1$ is an `interior' torus, i.e. it can be $C^0$-approximated from both sides in $\Lambda$ by other invariant primary tori.
For each $k\in\{1,\ldots,n-1\}$, the image $S(T_k)$ of the torus $T_k$ under the scattering map $S$ has a topologically transverse  intersection with the torus $T_{k+1}$.
We would like to show that there exists an orbit that visits some $\eps_k$-neighborhood of each torus $T_k$, $k=1,\ldots,n$,  in the prescribed order. To this end we will construct a sequence of $2$-dimensional windows in $\Lambda$ that are correctly aligned one with another under the scattering map alternatively with some iterates of the inner map, as in Lemma \ref{lem:shadowing2}. Then the lemma will imply the existence of a shadowing orbit to the transition chain.

For each $k\in\{1,\ldots,n-1\}$, we choose and fix  a pair of points $x^-_{k,k+1}\in T_k$ and $x^+_{k,k+1}\in T_{k+1}$  such that $S(x^-_{k,k+1})=x^+_{k,k+1}$ and $S(T_{k})$ intersects $T_{k+1}$ transversally at $x^+_{k,k+1}$.

We construct inductively a sequence of correctly aligned windows in $\Lambda$ along the tori $\{T_1,T_2,\ldots, T_n\}$, such that each window is correctly aligned with the next window in the sequence either by the outer map or by some sufficiently large power of the inner map. Moreover, each window will be contained in some $\eps$-neighborhood of some transition torus.
We start the inductive construction at $T_1$. Consider the point $x^-_{1,2}\in T_1$ with $S(x^-_{1,2})=x^+_{1,2}\in T_{2}$  as above.

We construct a window $R'_1$ about $T_1$ as follows. Let $\bar e'_1$ be an arc contained in $T_1$, and $\chi'_1:\mathbb{R}^2\to \Lambda$ a  $C^0$-local parametrization  such that $\chi'_1([0,1]\times \{0\})=\bar e'_1$. Then we define
\begin{eqnarray*}
R'_{1}=\chi'_1([0,1]\times [-\delta'_1,\delta'_1])
,\\ {R'}_{1}^\exit=\chi'_1(\partial [0,1]\times [-\delta'_1,\delta'_1]),\end{eqnarray*}
where $0<\delta'_1<\eps_1$.
We choose $\bar e'_1$ and $\delta'_1$ sufficiently small so that and $S(\bar e'_1)$ intersects $T_2$ only at $x^+_{1,2}$, and also so that $R'_1$ defined as above is contained in ${U}^-_+$. The exit direction of $R'_1$ is in the direction of the torus $T_1$.

The image $S(R'_1)$ of $R'_1$ under the scattering map is a topological rectangle. Since $S(\bar e'_1)$ is transverse to $T_2$ at $x^+_{1,2}$, then by choosing $\delta'_1$ sufficiently small we ensure that the components of $S({R'}_{1}^\exit)$ lie on opposite sides of $T_2$. Thus, the exit direction of $S(R'_1)$ is across the torus $T_2$.

Next we consider the  point $x^-_{2,3}\in T_2$ with $S(x^-_{2,3})=x^+_{2,3}\in T_{3}$  as above. We construct a window $R'_2$ about $T_2$ in a manner similar to $R'_1$:
\begin{eqnarray*}
R'_{2}=\chi'_2([0,1]\times [-\delta'_2,\delta'_2])
,\\ {R'}_{2}^\exit=\chi'_2(\partial [0,1]\times [-\delta'_2,\delta'_2]),\end{eqnarray*}
where $\bar e'_2$ is an arc contained in $T_2$, $\chi'_2:\mathbb{R}^2\to \Lambda$ is a  $C^0$-local parametrization  such that $\chi'_2([0,1]\times \{0\})=\bar e'_2$, and  $0<\delta'_2<\eps_2$ is chosen sufficiently small.

Now we construct a window $R_2$ about the point $x^+_{1,2}$ such that $R'_1$ is correctly aligned with $R_2$ under $S$ and
$R_2$ is correctly aligned with $R'_2$ under some power of $\Fin$. We choose an arc $\bar e_2$ in $T_2$ containing $x^+_{1,2}$ such that $\bar e_2 \supseteq S(R'_1)\cap T_2$. We choose a pair of invariant tori $T_2^{\rm lower}$ and $T_2^{\rm upper}$ such that $T_2^{\rm lower} \prec  T_2 \prec  T_2^{\rm upper}$ and the exit set components of $S(R'_1)$, as well as the entry set components of $R'_2$, are outside of the annulus bounded by  $T_2^{\rm lower}$ and $T_2^{\rm upper}$, on the both sides of the annulus. Furthermore, we require that  $T_2^{\rm lower}$ and $T_2^{\rm upper}$ should be $\eps_2$-close to $T_2$ in the $C^0$-topology. The existence of such neighboring tori $T_2^{\rm lower}$ and $T_2^{\rm upper}$ to $T_2$ is guaranteed by (A5-iv). Let
$\chi_2:\mathbb{R}^2\to \Lambda$ be a  $C^0$-local parametrization  such that $\chi_2( \{0\}\times[0,1])=\bar e_2$,
$\chi_2( \{-\delta_2\}\times[0,1])\subseteq T_2^{\rm lower}$ and
$\chi_2( \{\delta_2\}\times[0,1])\subseteq T_2^{\rm upper}$,
for some $0<\delta_2<\eps_2$  sufficiently small. Define
\begin{eqnarray*}
R_{2}=\chi_2([-\delta _2,\delta _2]\times [0,1])
,\\ {R}_{2}^\exit=\chi _2(\partial [-\delta _2,\delta _2]\times [0,1]).\end{eqnarray*}
The exit set components of $R_2$ lie on the invariant tori $T_2^{\rm lower},T_2^{\rm upper}$ neighboring $T_2$.

Since the motion on the torus $T_2$ is topological transitive, there exists a power $\Fin ^{K_2}$ of $\Fin$ such that $R_2$ is correctly aligned with $R'_2$ under $\Fin ^{K_2}$. See Figure \ref{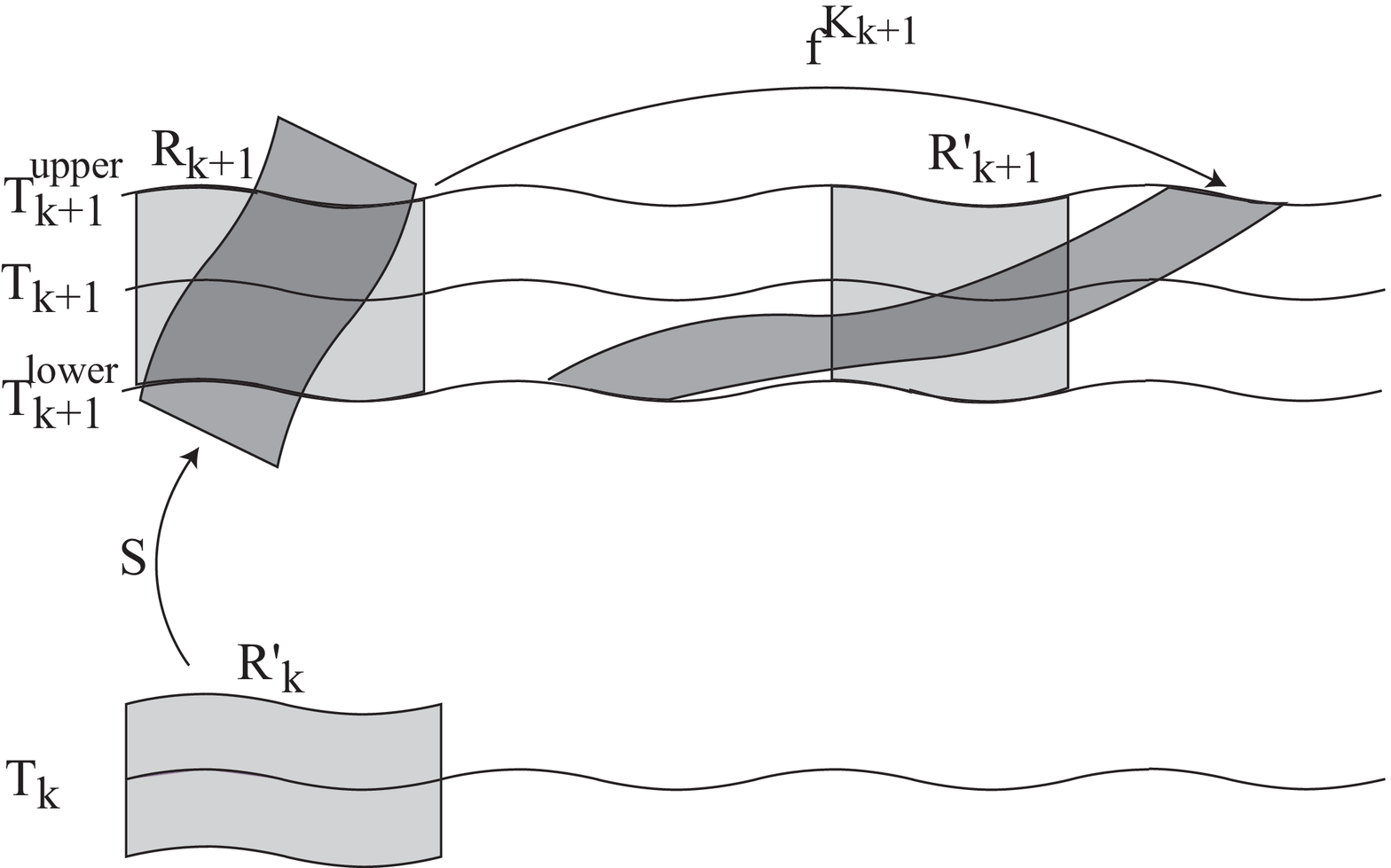}.  We have obtained the windows $R'_1$ about $T_1$ and $R'_2,R_2$ about $T_2$ such that $R'_1$ is correctly aligned with $R_2$ under $S$ and $R_2$ is correctly aligned with $R'_2$ under some power of $\Fin$. This ends the initial step of the inductive construction.

The inductive step goes on similarly, yielding the windows $R'_k$ about $T_k$ and $R'_{k+1},R_{k+1}$ about $T_{k+1}$ such that $R'_k$ is correctly aligned with $R_{k+1}$ under $S$ and $R_{k+1}$ is correctly aligned with $R'_{k+1}$ under some power of $\Fin$. See Figure \ref{transtori.eps}.
\begin{figure} \centering
\includegraphics*[width=0.5\textwidth, clip, keepaspectratio]
{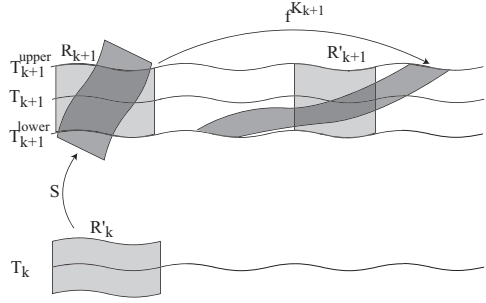}
    \caption[]{Construction of windows along a transition chain.}
    \label{transtori.eps}
\end{figure}

The construction proceeds inductively until a sequence of windows in $\Lambda$  of the following type is obtained
\[ R'_1,R_2,R'_2, \ldots, R_k,R'_k, R_{k+1}, R'_{k+1}, \ldots, R_n, \]
where for each $k\in\{1,\ldots,n-1\}$ we have that $R'_k$ is correctly aligned with $R_{k+1}$ under $S$, and $R_{k+1}$ is correctly aligned with $R'_{k+1}$ under $\Fin ^{K_{k+1}}$ for some $K_{k+1}>0$. Each window $R_k,R'_k$ is contained in an $\eps_k$-neighborhood of the torus $T_k$.  Applying the shadowing lemma-type of result Lemma \ref{lem:shadowing2} provides an orbit that visits an $\eps_k$-neighborhood in the phase space of each window in the sequence, and in particular of each torus in the transition chain.

\subsection{Gluing correctly aligned windows across BZI's with correctly aligned windows along transition chains of tori}\label{sec:windowsglue}

We consider three transition chains of tori $\{T_i\}_{i=i_{k-1}+1,\ldots, i_{{k}}}$, $\{T_i\}_{i=i_k+1,\ldots, i_{k+1}}$,  $\{T_i\}_{i=i_{k+1}+1,\ldots, i_{{k+2}}}$, with the property that each one of the regions between $T_{i_{k}}$ and $T_{i_{{k}}+1}$, and between $T_{i_{k+1}}$ and $T_{i_{k+1}+1}$, is either a BZI as in (A6), or it contains a finite number of invariant tori that separate the region as in (A6$'$). Inside each region there is a prescribed collection of Aubry-Mather sets as in (A7)

The constructions in Subsection \ref{sec:windowsbirkhoff} and in Subsection \ref{sec:windowsbzis} yield correctly aligned windows in $\Lambda$ that cross
the region between $T_{i_{k}}$ and $T_{i_{{k}}+1}$, and  correctly aligned windows in $\Lambda$ that cross the region between $T_{i_{k+1}}$ and $T_{i_{k+1}+1}$. The construction in Subsection \ref{sec:windowstransitionchain} yield sequences of correctly aligned windows along the adjacent transition chains. The choices of the windows constructed along the transition chains depend on the choices of the windows that cross the region between $T_{i_{k}}$ and $T_{i_{{k}}+1}$, and  of the windows that cross the region between $T_{i_{k+1}}$ and $T_{i_{k+1}+1}$. Propagating the construction of windows starting from $T_{i_k+1}$  and moving forward along the transition chain $\{T_i\}_{i=i_k+1,\ldots, i_{k+1}}$, and the construction of windows starting from $T_{i_{k+1}}$ and moving backward along the same transition chain
$\{T_i\}_{i=i_k+1,\ldots, i_{k+1}}$, may result in a pair of windows about some intermediate torus that are not correctly aligned. We would like to glue this sequences of windows in a manner that is correctly aligned, without having to revise the windows constructed to that point.

Assume that $T_j$ is one of the tori $\{T_i\}_{i=i_k+1,\ldots, i_{k+1}}$,  with $j\in\{i_k+2,\ldots, i_{k+1}-1\}$. Assume that one has already constructed a window $R_j$ about $T_j$ by propagating the construction  from $T_{i_k+1}$  and moving forward along the transition chain, and another window $R'_j$ about $T_j$ by propagating the construction  from $T_{i_{k+1}}$ and moving backwards along the transition chain.
The window $R_j$ is of the form
\begin{eqnarray*}
R _{j}=\chi _j ([-\delta _j,\delta _j]\times[0,1]),\\
{R }_{j}^\exit=\chi _j (\partial[-\delta _j,\delta _j]\times[0,1]),
\end{eqnarray*}
where $\chi _{j}:\mathbb{R}^2\to\Lambda$ is a $C^0$-local parametrization with $\chi _{j}(\{0\}\times[0,1])\subseteq T_j$,  and $\chi _j (\{-\delta _j\} \times[0,1])\subseteq T_j^{\rm lower}$ and $\chi _j (\{\delta _j\} \times[0,1])\subseteq T_j^{\rm upper}$, where $T_j^{\rm lower}$ and $T_j^{\rm upper}$ are two primary invariant tori on the opposite sides of $T_j$.
The window $R'_j$ is of the form
\begin{eqnarray*}
R'_{j}=\chi'_j ([0,1]\times[-\delta'_j,\delta'_j]),\\
{R'}_{j}^\exit=\chi'_j (\partial[0,1]\times[-\delta'_j,\delta'_j]),
\end{eqnarray*}
where $\chi'_{j}:\mathbb{R}^2\to\Lambda$ is a $C^0$-local parametrization with $\chi'_{j}([0,1]\times\{0\})\subseteq T_j$,  and $\chi'_j ([0,1]\times\{-\delta'_j\} )\subseteq T_j^{\rm lower}$ and $\chi'_j ([0,1]\times\{\delta'_j\} )\subseteq {T'}_j^{\rm upper}$, where ${T'}_j^{\rm lower}$ and ${T'}_j^{\rm upper}$ are two primary invariant tori on the opposite sides of $T_j$.

Let us assume that the annular region between ${T'}_j^{\rm lower}$ and ${T'}_j^{\rm upper}$ is inside the region between $T_j^{\rm lower}$ and $T_j^{\rm upper}$. We construct a new window $R''_j$ about $T_j$, such that $R_j$  is correctly aligned with $R''_j$ under the identity map, and $R''_j$ is correctly aligned with $R'_j$ under some power of $f$.
We let $R''_j$ is of the form
\begin{eqnarray*}
R''_{j}=\chi''_j ([-\delta''_j,\delta''_j]\times[0,1]),\\
{R''}_{j}^\exit=\chi''_j (\partial[-\delta''_j,\delta''_j]\times[0,1]),
\end{eqnarray*}
where $\chi''_{j}:\mathbb{R}^2\to\Lambda$ is a $C^0$-local parametrization with $\chi''_{j}(\{0\}\times[0,1])\supseteq \chi'_{j}(\{0\}\times[0,1])$,  and  $\chi''_j (\{-\delta''_j\} \times[0,1])\subseteq {T'}_j^{\rm lower}$ and $\chi''_j (\{\delta''_j\} \times[0,1])\subseteq {T'}_j^{\rm upper}$, for some $\delta''_j>0$.
Since the motion on the torus $T_j$ is topological transitive, there exists a power $\Fin ^{K_j}$ of $\Fin$ such that $R''_j$ is correctly aligned with $R'_j$ under $\Fin ^{K_j}$. See Figure \ref{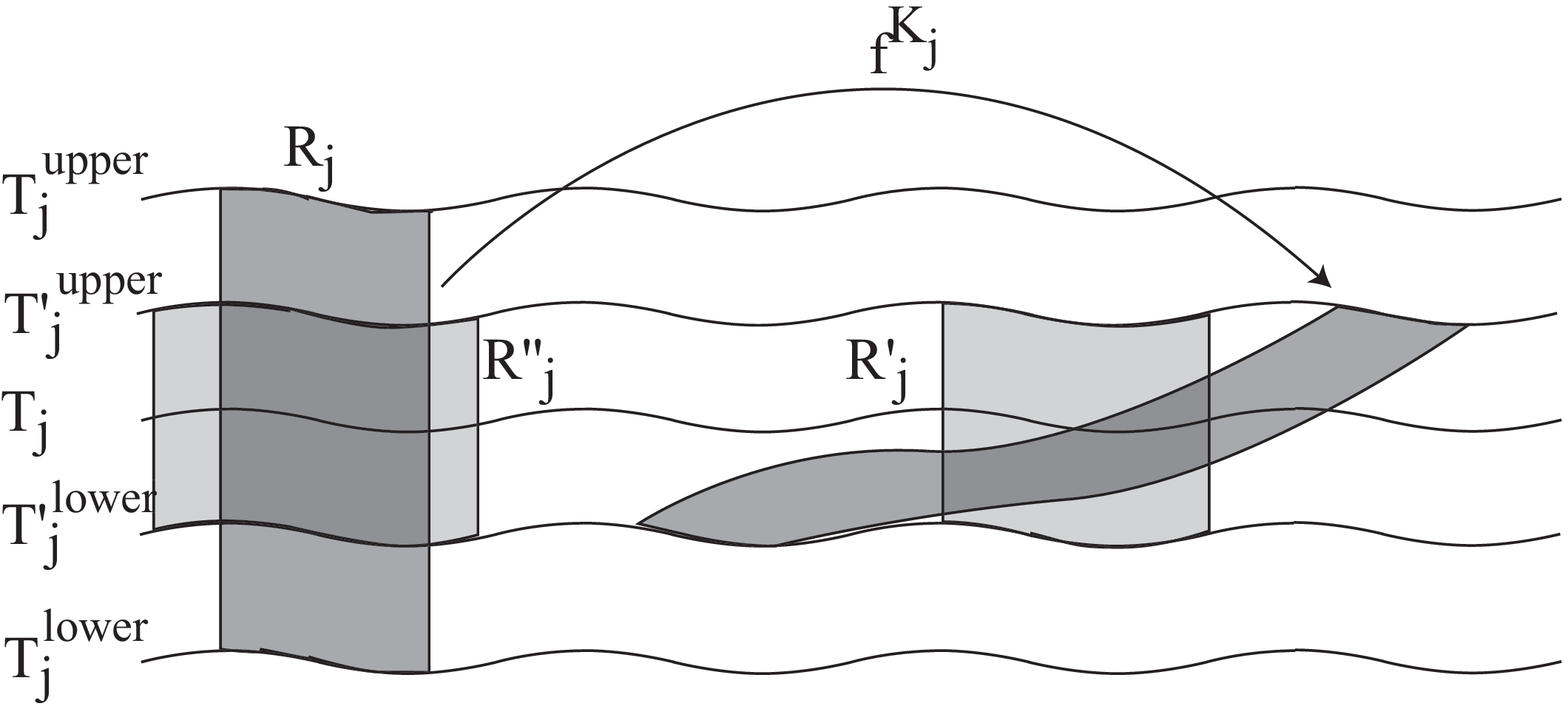}.  We have obtained that  $R_j$  is correctly aligned with $R''_j$ under the identity map, and $R''_j$ is correctly aligned with $R'_j$ under some power $\Fin ^{K_j}$  of $\Fin$.

\begin{figure} \centering
\includegraphics*[width=0.5\textwidth, clip, keepaspectratio]
{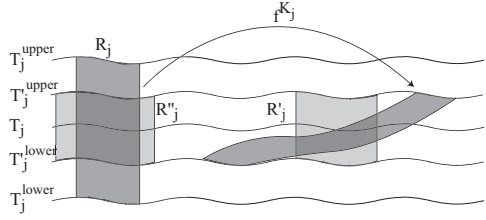}
    \caption[]{Gluing sequences of correctly aligned windows along a transition chain.}
    \label{glue.eps}
\end{figure}

The case when the annular region between $T_j^{\rm lower}$ and $T_j^{\rm upper}$ is inside the region between ${T'}_j^{\rm lower}$ and ${T'}_j^{\rm upper}$ results in the window $R_j$ correctly aligned with $R'_j$ under some power of $f$, without the need of creating an intermediate window $R''_j$. The case when neither annular region is contained in the other annular region can be dealt similarly by constructing an intermediate window $R''_j$.

Through this process, a sequence of correctly windows constructed forward along a transition chain can be glued, in a correctly aligned manner, with a sequence of correctly windows constructed backwards along the same transition chain.

\subsection{Proof of Theorem \ref{thm:main1}}\label{sec:proof}
To summarize, in Subsection \ref{sec:windowsbirkhoff} and in Subsection \ref{sec:windowsbzis}, we described the
construction of correctly aligned windows that cross an annular regions between two consecutive transition chains of invariant tori. These annular regions are BZI's or else they are separated by some finite collection of invariant tori. If each annular region has some prescribed collection od Aubry-Mather sets,  the construction yields  windows that follow these Aubry-Mather sets.
In Subsection  \ref{sec:windowstransitionchain} yield sequences of correctly aligned windows each transition chain.
In Subsection \ref{sec:windowsglue} the construction of a sequence of correctly windows constructed forward along a transition chain can be glued, in a correctly aligned manner, with a the construction of a sequence of correctly windows constructed backwards along the same transition chain.
Thus, starting from some initial annular region,
one can construct  sequences of  correctly
aligned windows, forward and backwards,  along infinitely many topological transition chains
interspersed with annular regions. The Shadowing Lemma-type of result Theorem
\ref{lem:shadowing2} implies the existence of an orbit that gets $\eps_i$-close to each
transition torus $T_i$, and also follows each Aubry-Mather set $\Sigma_i$ for a prescribed time
$n_i$.

\bibliography{diff}
\bibliographystyle{plain}
\end{document}